\def\BibTeX{{\rm B\kern-.05em{\sc i\kern-.025em b}\kern-.08em
    T\kern-.1667em\lower.7ex\hbox{E}\kern-.125emX}}
\pgfplotsset{compat=1.13}
\DeclarePairedDelimiter{\abs}{\lvert}{\rvert}
\DeclarePairedDelimiter{\norm}{\lVert}{\rVert}
\DeclarePairedDelimiter{\ceil}{\lceil}{\rceil}
\newcommand{\sqrtm}[1]{#1^{\sfrac{1}{2}}}
\newcommand{\trans}[1]{#1'}
\newcommand{\ctrans}[1]{#1^{*}}
\newcommand{\trace}[1]{\text{Tr}\left(#1\right)}
\newcommand{\Htwonorm}[1]{\norm{#1}_{H_2}}
\newcommand{\Hinfnorm}[1]{\norm{#1}_{H_\infty}}
\newcommand{\Fronorm}[1]{\norm{#1}_{F}}
\newcommand{\Twonorm}[1]{\norm{#1}_{2}}
\newcommand{\Linf}[0]{\mathcal{L}^{\infty}}
\newcommand{\Cpi}{\mathcal{\tilde{C}}_{2\pi}}
\newtheorem{proposition}{Proposition}
\newtheorem{assumption}{Assumption}
\newtheorem{theorem}{Theorem}
\newtheorem{lemma}{Lemma}
\newtheorem{corollary}{Corollary} 
\newtheorem{system}{System}
\newtheorem{definition}{Definition}
\begin{document}
    
\title{Bounding Computational Complexity under Cost Function Scaling in Predictive Control}

\author{Ian~McInerney, Eric~C.~Kerrigan, George~A.~Constantinides
\thanks{The support of the EPSRC Centre for Doctoral Training in High Performance Embedded and Distributed Systems  (HiPEDS, Grant Reference EP/L016796/1) is gratefully acknowledged.}%
\thanks{The authors are with the Department of Electrical \& Electronics Engineering, Imperial College London, SW7~2AZ, U.K. E.C.\ Kerrigan is also with the  Department of Aeronautics. email: i.mcinerney17@imperial.ac.uk,  e.kerrigan@imperial.ac.uk, g.constantinides@imperial.ac.uk}
\thanks{Supporting code for the analysis and figures in this paper is available at {\tt https://doi.org/10.24433/CO.3311801.v1}.}}

\maketitle

\begin{abstract}
We present a framework for upper bounding the number of iterations required by first-order optimization algorithms implementing constrained LQR controllers.
 We derive new bounds for the condition number and extremal eigenvalues of the primal and dual Hessian matrices when the cost function is scaled.
 These bounds are horizon-independent, allowing for their use with receding, variable and decreasing horizon controllers.
 We considerably relax prior assumptions on the structure of the weight matrices and assume only that the system is Schur-stable and the primal Hessian of the quadratic program (QP) is positive-definite.
 Our analysis uses the Toeplitz structure of the QP matrices to relate their spectrum to the transfer function of the system, allowing for the use of system-theoretic techniques to compute the bounds.
 Using these bounds, we can compute the effect on the computational complexity of trading off the input energy used against the state deviation.
 An example system shows a three-times increase in algorithm iterations between the two extremes, with the state 2-norm decreased by only 5\% despite a greatly increased state deviation penalty.
\end{abstract}

\begin{IEEEkeywords}
model predictive control (MPC), optimal control, weight matrix selection, computational complexity bounds, fast gradient method (FGM), constrained LQR, dual gradient projection
\end{IEEEkeywords}

\section{Introduction}
\label{sec:intro}

Processors in modern Cyber-Physical Systems (CPS) are routinely being utilized for more than just control, with additional tasks such as communication, coordination, user-interface and data-collection becoming more widespread as designers adopt networked systems and the Internet of Things.
 At the same time, computational resources are being further constrained by the demand for low-power and low-cost designs.
 Guaranteeing the proper operation of the control system on resource constrained processors in environments with safety and operational constraints is important to guarantee the dependability of the CPS \cite{Johansen2017_DependableMPC}.
 
Model Predictive Control (MPC), and specifically the Constrained Linear Quadratic Regulator (CLQR), was recently  highlighted in \cite{Lucia2016_CPSperspectives} as a control algorithm aptly suited to provide these operational guarantees at a functional level.
 This means that guaranteeing the dependable operation of the CPS now requires analyzing the MPC algorithm, and guaranteeing its performance given the computational resources present.
 To this end, the authors of \cite{Johansen2017_DependableMPC} suggest that an important question to answer is: what effect do the reduced computational resources have on the system performance?
 This question views the computation and system performance as two separate factors, focusing on quantifying the performance degradation experienced due to the computational resources.

Instead, we propose that the system performance should be examined together with the computational resources by asking the question: what effect does the desired system performance have on the computational resources required?
 The performance constraints are usually given as bounds (e.g.\ ``settling-time less than 1s" or ``track this signal with less than 5\% error") rather than an exact criterion, creating a space of possible controllers that can satisfy the requirements.
 These controllers may have different computational resource demands, opening up the opportunity to trade-off the computation with the system performance.
 We show in this paper that exploring this trade-off is beneficial; with a sample system demonstrating a reduction in computational resource demand by an order of magnitude while only increasing the state 2-norm value by 5\%.
 
To quantify this trade-off, we analyze how the computational complexity (e.g.\ number of algorithm iterations) varies as the weighting matrices in the cost function of the CLQR are changed.
 We focus on first-order methods for solving the Quadratic Programming (QP) formulation of the condensed CLQR method (such as the Fast Gradient Method (FGM)~\cite{Richter2012} and Dual Gradient Projection (DGP) method~\cite{Patrinos2013_DGP}), since their computational complexity is sensitive to the problem conditioning \cite{Richter2012,Giselsson2013_optimalPrecond,Giselsson2014_precondFastDual}.
 For this analysis, we present a framework to compute horizon-independent condition numbers and extremal eigenvalues for the matrices in both the primal and dual QPs for the condensed CLQR with arbitrary weighting matrices for the states, inputs and state-input cross-term (defined as $Q$, $R$ and $S$, respectively).
 We then use these results to examine the effect that scaling the weight matrices has on the computational complexity of FGM and DGP.
 
Prior work in \cite{Richter2012} and \cite{Goodwin2005} derived horizon-independent bounds on the condition number of the primal QP with no cross-term $S$ and with the structure of the input weight matrix $R$ assumed to be a multiple of the identity matrix.
 Our relaxation of the assumptions on $R$ and $S$ allow for the bounding of problems created through controller matching (e.g.\ \cite{DiCairano2010_MPCcontrollerTuning,Hartley2013_LTIcontrollerMatching}) or through the discretization of a continuous-time problem into discrete-time \cite{Bini2014_OptimalSamplingPattern}.
 Prior bounds for the dual QP matrices in \cite{Patrinos2013_DGP} utilize the sub-multiplicative property of the matrix 2-norm to bound the maximal eigenvalue.
 We present a tight horizon-independent bound on the maximal eigenvalue in this work, reducing the conservativeness of iteration bounds for dual methods such as DGP.

Our bounding framework is based on truncated infinite-dimensional Toeplitz operators (see \cite{Gutierrez-Gutierrez2012_blockSurvey, Gray2005} for a survey of the mathematics behind these operators).
 These operators have been applied extensively in the field of robust control~\cite{Francis1987}, but they have seen limited application to MPC.
 Prior work in MPC using these operators has focused on deriving suboptimal MPC algorithms~\cite{Rojas2003_SVDcontrol,Kojima2004_ContSVD},
 equating properties of the condensed primal Hessian matrix with the system frequency response~\cite{Rojas2004_HessAsymp},
 and relating the stability of MPC to the phase-space of the system \cite{Tran2012_SISOasymp,Tran2013_MIMOasymp}.
 We use these operators to find the horizon-independent bounds using system-theoretic properties such as the $H_{\infty}$ norm.

In the remainder of this section we define the mathematical notation used throughout the rest of the paper.
 In Section~\ref{sec:prelim} we present the CLQR problem formulation, an overview of the computational complexity of FGM and DGP, and the dynamical systems used in the numerical examples throughout this paper.
 We present the spectral bounds of the QP matrices in Section~\ref{sec:specRes}.
 Section~\ref{sec:weightScaling} examines the effect of weight matrix scaling on the condition number and extremal eigenvalues of the QP matrices.
  Section~\ref{sec:compComplexity} then examines the effect of the weight matrix scaling on the computational complexity of FGM and DGP.
 We briefly discuss the extension of these results to the case when the QP is preconditioned in Section~\ref{sec:preconditioning}, before concluding in Section~\ref{sec:conclusion}.

\subsection{Notation}

$\trans{A}$ and $\ctrans{A}$ denote the transpose and conjugate-transpose of the matrix $A$ respectively.
 $A\otimes B$ represents the Kronecker product of matrix $A$ with matrix $B$.
 For a block-Toeplitz matrix $T$, $T_{n}$ represents the truncated version of $T$ after $n$ diagonals (where $n$ is a non-zero positive integer).
 Let $\lambda_1 \leq \dots \leq \lambda_k$ be the eigenvalues of a matrix in sorted order, with the set of all eigenvalues denoted by $\lambda$.
 Let $0 \leq \sigma_{k} \leq \dots \leq \sigma_{1}$ be the singular values of a matrix in sorted order, with the set of all singular values denoted by $\sigma$.
 The p-norm is denoted by $\norm{\circ}_p$, with $\Twonorm{A}$ the matrix spectral norm, and $\Fronorm{A}$ the Frobenius norm.
 The $H_{2}$ and $H_{\infty}$ norms of a dynamical system $G_{s}(\cdot)$ are $\Htwonorm{G_{s}}$ and $\Hinfnorm{G_{s}}$ respectively.
 The condition number of a matrix is defined as $\kappa(A)\coloneqq\nobreak\Twonorm{A}\Twonorm{A^{-1}}$, 
 and the condition number of a dynamical system is defined as $\kappa(G_{s})\coloneqq\nobreak\Hinfnorm{G_{s}}\Hinfnorm{G_{s}^{-1}}$. 
 
$\Linf$ is the space of matrix-valued essentially bounded functions (e.g.\ matrix-valued functions that are measurable and have a finite Frobenious norm almost-everywhere on their domain, see \cite[\S 2]{Miranda2000}).
 $\Cpi$ is the space of continuous $2\pi$-periodic functions inside $\Linf$.
\begin{definition}
    Let $\mathbb{T} \coloneqq \{z \in \mathbb{C} : \abs{z} = 1 \}$ be the complex unit circle and $P_{T}(\cdot)$ a function that maps $\mathbb{T} \to \mathbb{C}^{m \times n}$.
    With $k = \min\{m,n\}$, the extreme singular values of $P_{T}(\cdot)$ are:
    \begin{align*}
        \sigma_{min}(P_{T})  \coloneqq \underset{z \in \mathbb{T}}{\sup}~\sigma_k(P_{T}(z)), \quad
        \sigma_{max}(P_{T})  \coloneqq \underset{z \in \mathbb{T}}{\sup}~\sigma_1(P_{T}(z)).
    \end{align*}
    If $m = n$, then the extreme eigenvalues of $P_{T}(\cdot)$ are:
    \begin{align*}
        \lambda_{min}(P_{T}) \coloneqq \underset{z \in \mathbb{T}}{\inf}~\lambda_1(P_{T}(z)), \quad
        \lambda_{max}(P_{T}) \coloneqq \underset{z \in \mathbb{T}}{\sup}~\lambda_n(P_{T}(z)).
    \end{align*}
\end{definition}
\begin{definition}
    Let $T_{n}$ be the $n \times n$ truncation of an infinite matrix $T$. We define the extrema of the spectrum of $T$ as:
    \begin{align*}
    \lambda_{min}(T) &\coloneqq \lim_{n \to \infty}~\lambda_1(T_{n}), \quad 
    \lambda_{max}(T) \coloneqq \lim_{n \to \infty}~\lambda_n(T_{n}), \\
    \sigma_{min}(T)  &\coloneqq \lim_{n \to \infty}~\sigma_n(T_{n}), \quad
    \sigma_{max}(T)  \coloneqq \lim_{n \to \infty}~\sigma_1(T_{n}).
    \end{align*}
\end{definition}

\section{MPC Preliminaries}
\label{sec:prelim}

In this section we present the constrained LQR formulation of the MPC problem.
 We introduce both the primal and dual quadratic programming formulations, and first-order optimization algorithms to solve them.

\subsection{CLQR Formulation}
\label{sec:prelim:formulation}

The CLQR formulation of MPC can be written as the following constrained quadratic programming problem
\begin{subequations}
    \label{eq:mpc:linMPC}
    \begin{align}
    \underset{u,x}{\text{min}}\   & \frac{1}{2} \trans{x_N} P x_N + \frac{1}{2} \sum_{k=0}^{N-1} 
    \trans{ \begin{bmatrix} x_k\\ u_k \end{bmatrix} }
    \begin{bmatrix}
    Q & S\\
    \trans{S} & R
    \end{bmatrix}
    \begin{bmatrix} x_k\\ u_k \end{bmatrix}
    \label{eq:mpc:lin:cost}\\
    \text{s.t.\ }  &
    \begin{aligned}[t]
    x_{k+1} &= A x_k + B u_k,\ k=0, \dots N-1 \label{eq:mpc:lin:dyn} \\
    x_{0} &= \hat{x}_{0}
    \end{aligned}\\
    & E u_k \leq c_u,\ k=0, \dots N-1 \label{eq:mpc:lin:conU}\\
    & D x_k \leq c_x,\ k=1, \dots N \label{eq:mpc:lin:conX}
    \end{align}
\end{subequations}
 where $N$ is the horizon length, $x_k \in \mathbb{R}^{n}$ are the states, and $u_k \in \mathbb{R}^{m}$ are the inputs at time sample $k$.
 $A \in \mathbb{R}^{n \times n}$ and $B \in \mathbb{R}^{n \times m}$ are the state-space matrices describing the discrete-time system $G_{s}$, and $\hat{x}_0 \in \mathbb{R}^{n}$ is the current measured system state.
 $D \in \mathbb{R}^{j \times n}$ and $E \in \mathbb{R}^{l \times m}$ are the stage constraint matrices for the states and inputs respectively, and the vectors $c_x \in \mathbb{R}^{j}$ and $c_u \in \mathbb{R}^{l}$ are the upper bounds for the stage constraints.
 The matrices $Q = \trans{Q} \in \mathbb{R}^{n \times n}, R = \trans{R} \in \mathbb{R}^{m \times m}, P=\trans{P} \in \mathbb{R}^{n \times n}$ are the weighting matrices for the system states, inputs, and final states respectively, and the matrix $S \in \mathbb{R}^{n \times m}$ weights the inputs against the states.
 The weighting matrices are chosen such that
 $
 \begin{bmatrix}
 Q & S\\
 \trans{S} & R
 \end{bmatrix}
 $ is positive definite.

This problem can be condensed by removing the state variables from~\eqref{eq:mpc:linMPC} to leave only the control inputs in the vector
$ u {\coloneqq} \trans{\begin{bmatrix}
\trans{u_0},~\trans{u_1},~\dots,~\trans{u_{N-1}}
\end{bmatrix}}$.
 The optimization problem is then the inequality-constrained problem
 \begin{subequations}
    \label{eq:mpc:condMPC}
    \begin{align}
    \underset{u}{\text{min}}\   & \frac{1}{2} \trans{u} H_c u + \trans{\hat{x}}_0 \trans{J} u\label{eq:mpc:cond:cost}\\
    \text{s.t.\ }  & G u \leq F \hat{x_0} + g \label{eq:mpc:cond:con}
    \end{align}
 \end{subequations}
 with 
 $ H_{c} \coloneqq \trans{\Gamma} \bar{Q} \Gamma + \trans{\bar{S}} \Gamma + \trans{\Gamma} \bar{S} + \bar{R} $,
 $ J \coloneqq \trans{\Gamma} \bar{Q} \Phi $, and the remaining matrices given in Appendix~\ref{app:mpcMatrices}.
 An alternative method of solving the CLQR problem~\eqref{eq:mpc:linMPC} is to transform~\eqref{eq:mpc:condMPC} into its dual problem 
 \begin{subequations}
    \label{eq:mpc:dual}
    \begin{align}
        \underset{y}{\text{min}}\   & \frac{1}{2} y^T H_d y + (J_d \hat{x}_0 + g)^{T} y\label{eq:mpc:dual:cost}\\
        \text{s.t.\ }  & y \geq 0 \label{eq:mpc:dual:con}
    \end{align}
 \end{subequations}
 where $y$ are the dual variables for the inequality constraints~\eqref{eq:mpc:cond:con},
 $H_d \coloneqq G H_c^{-1} \trans{G}$ and 
 $J_d \coloneqq G H_c^{-1} J + F $.
 Since there is strong duality between~\eqref{eq:mpc:condMPC} and~\eqref{eq:mpc:dual}, the control sequence can be recovered from a dual-optimal solution $y^{*}$ through
 $ u^{*} = - H_c^{-1} (\trans{G} y^{*} + \trans{J} x_0) $.
 
\subsection{Solution Algorithms}
\label{sec:prelim:algorithms}

In this work we focus on two first-order methods commonly used for embedded implementations: the Fast Gradient Method (FGM) and the Dual Gradient Projection (DGP) method.

\subsubsection{Fast Gradient Method}
\label{sec:prelim:algorithms:fgm}

The Fast Gradient Method was originally proposed by Nesterov to solve constrained convex programs, and was adapted in \cite{Richter2012} to solve the condensed QP~\eqref{eq:mpc:condMPC} with only input constraints.
 The FGM will minimize the cost~\eqref{eq:mpc:cond:cost} while using a projection operation to satisfy the inequality constraints~\eqref{eq:mpc:cond:con}.
 It is popular in embedded implementations when the constraint set~\eqref{eq:mpc:cond:con} is upper/lower bounds due to the existence of a simple projection operator, an Upper Iteration Bound (UIB), and sizing rules for fixed-point data-types.

The UIB for FGM can be computed from
 \begin{equation}
    \label{eq:fgm:uib}
    \text{UIB} \coloneqq \max\left\{ 0, \min \left\{ a, b \right\} \right\},
 \end{equation}
 \begin{equation*}
    a \coloneqq \ceil*{ \frac{\ln\epsilon - \ln\Delta}{\ln \left(1 - \sqrt{\frac{1}{\kappa}} \right)} }, \quad b \coloneqq \ceil*{ 2\sqrt{\frac{\Delta}{\epsilon}} - 2, },
 \end{equation*}
 where $\epsilon$ is the desired tolerance for the primal solution, $\kappa$ is the condition number of the Hessian $H_{c}$, and $\Delta$ is a constant determined by the constraint set \cite{Richter2012}.
 
The value of $\Delta$ can be found by either solving one of the optimization problems given in \cite{Richter2012} or utilizing an upper-bound, while the value for $\epsilon$ should be chosen to ensure stability properties of the system.
 A horizon-independent bound on $\epsilon$ and $\Delta$ for warm-started FGM was calculated in \cite{Richter2012} as
 \begin{equation}
    \label{eq:fgm:constBounds}
     \epsilon \leq \frac{\mu}{2} \frac{\delta_{max}^2}{\norm{B}^2}, \qquad
     \Delta \leq \lim\limits_{N \to \infty} \kappa \epsilon,
 \end{equation}
 where $\mu$ is the smallest eigenvalue of the Hessian $H_{c}$, and $\norm{x_{+} - x_{+}^{*}} \leq \delta_{max}$, where $x_{+}$ is the next state and $x_{+}^{*}$ is the next state under the optimal input.
 
\subsubsection{Dual Gradient Projection}
\label{sec:prelim:algorithms:dgp}
 
An alternative algorithm to solve~\eqref{eq:mpc:linMPC} is the Dual Gradient Projection (DGP) algorithm described in \cite{Patrinos2013_DGP}.
 DGP is a non-accelerated gradient method that operates on the dual problem~\eqref{eq:mpc:dual}.
 There are two main steps in the algorithm: the gradient computation and then a projection onto the non-negative orthant.
 The projection operation is onto the non-negative orthant regardless of the constraint set~\eqref{eq:mpc:cond:con}, making DGP suitable for embedded applications with complex constraint sets.
 
Theoretical results in \cite{Patrinos2013_DGP} presented the following UIB for the DGP algorithm
 \begin{equation}
    \label{eq:dgp:uib}
    \text{UIB} \coloneqq \frac{LD^2 \alpha^2}{2(\epsilon_g - 2 D \epsilon_{\xi})\alpha - 2(\epsilon_g + L_V \epsilon_z^2)} - 1,
 \end{equation}
 where $L \coloneqq \lambda_{max}(H_{d})$ and $L_V \coloneqq \lambda_{max}(H_c)$.
 $\epsilon_g$ is the largest permissible constraint satisfaction error, $\epsilon_z$ is the desired tolerance of the dual solution, and $\epsilon_{\xi}$ is the error in the dual gradient computation.
 $D$ is the Upper Dual Bound (UDB), which is defined as $D \coloneqq \norm{d}$ where $d_{i} \coloneqq \max\{y^*_i, 1\}$ and $y^*$ is the optimal dual vector.
 $D$ can be estimated by solving one of several optimization problems given in \cite{Patrinos2014_GPAD} or \cite{Necoara2014}, and $\alpha$ can be calculated using the formula in \cite{Patrinos2013_DGP}.


\subsection{Systems for the Numerical Examples}

Throughout this paper, the theoretical results are illustrated by numerical examples using the following two dynamical systems.

\begin{system}
    \label{sys:jonesMorari}
    The discrete-time system with four-states and two-inputs given in \cite{Jones2008} with state equation and cost matrices
    \begin{align*}
    \label{eq:app:cond:system}
    x^{+} = \begin{bmatrix}
    0.7 & -0.1 & 0.0 & 0.0 \\
    0.2 & -0.5 & 0.1 & 0.0 \\
    0.0 &  0.1 & 0.1 & 0.0 \\
    0.5 &  0.0 & 0.5 & 0.5
    \end{bmatrix}
    x + 
    \begin{bmatrix}
    0.0 & 0.1 \\
    0.1 & 1.0 \\
    0.1 & 0.0 \\
    0.0 & 0.0
    \end{bmatrix}
    u, \\
    Q = \text{diag}(10, 20, 30, 40), \quad
    R = \text{diag}(10, 20).
    \end{align*}
    The inputs and states of the system are constrained to be $\abs{u_{i}} \leq 0.5$ and $\abs{x_{i}} \leq 0.5$ respectively.
\end{system}

\begin{system}
    \label{sys:msd}
    \begin{figure}
        \centering
        \begin{tikzpicture}

\usetikzlibrary{calc,patterns,decorations.pathmorphing,decorations.markings}

\tikzstyle{spring}=[thick,decorate,decoration={zigzag,pre length=0.3cm,post length=0.3cm,segment length=6}]
\tikzstyle{damper}=[thick,decoration={markings,  
  mark connection node=dmp,
  mark=at position 0.5 with 
  {
    \node (dmp) [thick,inner sep=0pt,transform shape,rotate=-90,minimum width=15pt,minimum height=3pt,draw=none] {};
    \draw [thick] ($(dmp.north east)+(2pt,0)$) -- (dmp.south east) -- (dmp.south west) -- ($(dmp.north west)+(2pt,0)$);
    \draw [thick] ($(dmp.north)+(0,-5pt)$) -- ($(dmp.north)+(0,5pt)$);
  }
}, decorate]
\tikzstyle{ground}=[fill,pattern=north east lines,draw=none,minimum width=0.75cm,minimum height=0.3cm]

\node [ground, name=ground1, rotate=-90, minimum width=2cm] at (0,0) {};
\draw (ground1.north west) -- (ground1.north east);

\node [draw, name=m1, minimum height=2cm, minimum width=1cm] at (5em,0em) {\footnotesize $m_1$};
\node [draw, name=m2, minimum height=2cm, minimum width=1cm] at (11em,0em) {\footnotesize $m_2$};
\node [draw, name=m3, minimum height=2cm, minimum width=1cm] at (19em,0em) {\footnotesize $m_{10}$};

\draw [damper] ($(ground1.north west)+(0cm,-0.66cm)$) -- node [midway, above=0.75em]  {\footnotesize $b_1$} ($(m1.north west)+((0cm,-0.66cm)$);
\draw [spring] ($(ground1.north east)+(0cm,0.66cm)$) -- node [midway, below=0.5em]  {\footnotesize $k_1$} ($(m1.south west)+((0cm,0.66cm)$);

\draw [damper] ($(m1.north east)+(0cm,-0.66cm)$) -- node [midway, above=0.75em]  {\footnotesize $b_2$} ($(m2.north west)+((0cm,-0.66cm)$);
\draw [spring] ($(m1.south east)+(0cm,0.66cm)$) -- node [midway, below=0.5em]  {\footnotesize $k_2$} ($(m2.south west)+((0cm,0.66cm)$);

\draw [damper] ($(m3.north east)+(-6em,-0.66cm)$) -- node [midway, above=0.75em]  {\footnotesize $b_{10}$} ($(m3.north west)+((0cm,-0.66cm)$);
\draw [spring] ($(m3.south east)+(-6em,0.66cm)$) -- node [midway, below=0.5em]  {\footnotesize $k_{10}$} ($(m3.south west)+((0cm,0.66cm)$);

\node [draw, fill, circle, inner sep=0pt, minimum size=0.25em] at (13em,0em) {};
\node [draw, fill, circle, inner sep=0pt, minimum size=0.25em] at (13.5em,0em) {};
\node [draw, fill, circle, inner sep=0pt, minimum size=0.25em] at (14em,0em) {};

\draw [->] ($(m1.center)+(-0.5em,1em)$) -| ($(m1.center)+(-0.5em,1.5em)$) -- ($(m1.center)+(0em,1.5em)$);
\node [anchor=west] at ($(m1.center)+(-0.25em,1.5em)$) {\footnotesize $F_1$};

\draw [->] ($(m2.center)+(-0.5em,1em)$) -| ($(m2.center)+(-0.5em,1.5em)$) -- ($(m2.center)+(0em,1.5em)$);
\node [anchor=west] at ($(m2.center)+(-0.25em,1.5em)$) {\footnotesize $F_2$};

\draw [->] ($(m3.center)+(-0.5em,1em)$) -| ($(m3.center)+(-0.5em,1.5em)$) -- ($(m3.center)+(0em,1.5em)$);
\node [anchor=west] at ($(m3.center)+(-0.25em,1.5em)$) {\footnotesize $F_{10}$};
\end{tikzpicture}
        \caption{Configuration of the mass-spring-damper in System~\ref{sys:msd}.}
        \label{fig:sys:msd}
    \end{figure}
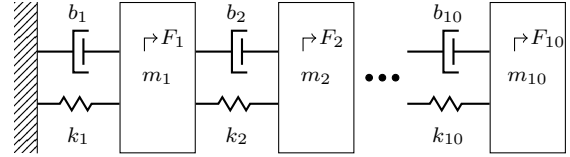
    The mass-spring-damper system from \cite{Khusainov2017_codesign} where 10 masses are coupled together by a spring and a damper in parallel, with a force input on each mass as shown in Figure~\ref{fig:sys:msd}. The continuous-time system is discretized using a zero-order hold with a sampling time of $T_{s} = 0.1$. The continuous-time cost matrices are
    \begin{equation*}
    Q_{c} = \begin{bmatrix}
    10 &  0\\
    0  & 20
    \end{bmatrix} \otimes I_{10}, \quad
    R_{c} = \text{diag}(100, 200, ...~900 , 1000),
    \end{equation*}
    which are then discretized as discussed in 
    \cite{Bini2014_OptimalSamplingPattern}
    in order to make the cost of the discrete-time problem equivalent to the cost of the continuous-time problem.
    This leads to dense $Q$ and $R$ matrices along with a cross-term matrix $S$ in the discrete-time problem.
    The inputs and states of the system are constrained to be $\abs{u_{i}} \leq 1$ and $\abs{x_{i}} \leq 0.2$ respectively.
\end{system}

\subsection{Assumptions}

In this work we make the following assumption on the predicted system in the CLQR problem.
\begin{assumption}
    The predicted system $G_{s}(\cdot)$ defined by the dynamics in \eqref{eq:mpc:lin:dyn} is Schur-stable. This means that all eigenvalues of the state transition matrix $A$ lie strictly inside the unit circle.
\end{assumption}

The assumption of Schur-stability is required to guarantee the convergence of a matrix series in Section~\ref{sec:specRes:condPred}, and therefore lies behind every result in this paper.
 This assumption is not restrictive though, since a system that is not Schur-stable can always be pre-stabilized by a separate controller that can guarantee Schur-stablity \cite{Rossiter1998_prestabilization}.

\section{Spectral Properties}
\label{sec:specRes}

In this section, we present spectral properties (e.g.\ condition numbers and eigen/singular value estimates) for the prediction matrix $\Gamma$ and the Hessian matrices $H_c$ and $H_d$.

\subsection{Condensed Prediction Matrix}
\label{sec:specRes:condPred}

The results contained in this subsection were previously reported in \cite[\S 11]{Goodwin2005} and \cite{Rojas2004_HessAsymp}, but we present an alternative derivation using Toeplitz operator theory instead of the Fourier transform to allow the results to be used in future sections.

To derive the singular value properties of the prediction matrix for the condensed form, $\Gamma$, we start by noting that its diagonals are constant blocks, making it a truncated block Toeplitz matrix.
 Many properties of a Toeplitz matrix with blocks of size $m \times n$ are closely linked to properties of a matrix-valued function mapping $\mathbb{T} \to \mathbb{C}^{m \times n}$, which is called its matrix symbol.
 The diagonal blocks of the matrix give the spectral coefficients of the matrix symbol, so the symbol can be represented as a Fourier series with the coefficients given by the matrix blocks.
 For $\Gamma$, this Fourier series converges if the discrete-time system is Schur-stable, giving the symbol in Lemma~\ref{lem:gammaSymbol}.
\begin{lemma}
    \label{lem:gammaSymbol}
    For a Schur-stable system $G_s$, the prediction matrix $\Gamma$ has the matrix symbol $P_{\Gamma} \in \Cpi$ with
    \begin{equation*}
        \label{eq:dense:pred:genFunc}
        P_{\Gamma}(z) \coloneqq z(zI - A)^{-1}B = z G_{s}(z)
        \qquad
        \forall z \in \mathbb{T}
    \end{equation*}
    where $G_s(\cdot)$ is the transfer function matrix for the system $G_s$.
\end{lemma}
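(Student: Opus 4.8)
The plan is to read off the constant block-diagonals of $\Gamma$ from the state recursion, recognise the infinite block-Toeplitz operator of which $\Gamma$ is a truncation, write down its symbol as a Fourier series in $z^{-1}$, and then sum that series in closed form, using Schur-stability both to justify the convergence and to make the closed form valid pointwise on $\mathbb{T}$.

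First I would eliminate the states from \eqref{eq:mpc:lin:dyn}: unrolling the recursion gives $x_k = A^{k}\hat{x}_0 + \sum_{i=0}^{k-1} A^{k-1-i}B\,u_i$, so in the condensed prediction $[\trans{x_1},\dots,\trans{x_N}]' = \Phi\hat{x}_0 + \Gamma u$ the block of $\Gamma$ coupling $x_k$ to $u_i$ equals $A^{k-1-i}B$ for $i<k$ and is $0$ otherwise. This block depends only on $k-i$, so $\Gamma$ is the $N$-block truncation of the lower-triangular infinite block-Toeplitz operator whose $p$-th block sub-diagonal (indexed so that the leading nonzero sub-diagonal is $p=0$) carries the constant block $A^{p}B$. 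By the standard correspondence between a block-Toeplitz operator and its symbol (see \cite{Gutierrez-Gutierrez2012_blockSurvey,Gray2005}), its matrix symbol is the Fourier series whose coefficients are these blocks,
\begin{equation*}
    P_{\Gamma}(z) = \sum_{p=0}^{\infty} A^{p}B\, z^{-p}, \qquad z \in \mathbb{T},
\end{equation*}
where I use the convention that the $p$-th sub-diagonal is carried by $z^{-p}$ — chosen precisely so the final answer reads $zG_s(z)$ and not $G_s(z)$ or a reflected version.

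Next I would check that this series is well-defined and that $P_{\Gamma} \in \Cpi$. Schur-stability of $G_s$ gives $\rho(A) < 1$, hence $\norm{A^{p}} \leq c\,r^{p}$ for some $c>0$ and $r \in (0,1)$, so $\sum_{p}\norm{A^{p}B} < \infty$ and the series converges absolutely and uniformly on $\mathbb{T}$. Its partial sums are matrix trigonometric polynomials, so the uniform limit $P_{\Gamma}$ is continuous, $2\pi$-periodic and bounded, i.e.\ $P_{\Gamma} \in \Cpi$. Summing the series gives $P_{\Gamma}(z) = \sum_{p\geq 0}(z^{-1}A)^{p}B = (I - z^{-1}A)^{-1}B$, which is legitimate at every $z \in \mathbb{T}$ because $\rho(A)<1$ rules out any eigenvalue of $A$ on the unit circle, so $I - z^{-1}A$ is invertible there. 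Factoring $I - z^{-1}A = z^{-1}(zI-A)$ yields $P_{\Gamma}(z) = z(zI-A)^{-1}B$, and $(zI-A)^{-1}B$ is by definition the transfer function $G_s(z)$, giving $P_{\Gamma}(z) = zG_s(z)$ as claimed.

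I expect the only genuine content, as opposed to bookkeeping, to be this convergence/continuity step: Schur-stability is invoked twice — once to get uniform convergence of the symbol series (hence membership in $\Cpi$), and once to keep $zI-A$ invertible on all of $\mathbb{T}$ so the closed form holds pointwise there rather than merely as a formal identity. The remaining care is purely in fixing the index-shift and sign conventions so that the leading block-diagonal of $\Gamma$ is identified with $B$ and the symbol emerges as $zG_s(z)$; pinning this down at the outset removes the off-by-one ambiguity.
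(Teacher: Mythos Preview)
Your proposal is correct and follows essentially the same route as the paper: identify the constant block sub-diagonals $A^{p}B$ of $\Gamma$, form the symbol as the Fourier series $\sum_{p\geq 0}A^{p}B\,z^{-p}$, invoke Schur-stability to sum this Neumann series to $z(zI-A)^{-1}B$, and use absolute summability of the coefficients to place $P_{\Gamma}$ in $\Cpi$. The paper's version is slightly terser (it cites the Neumann-series sum and the Wiener-class argument rather than spelling out the uniform-convergence step), but the structure and content are the same.
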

\begin{proof}
    The diagonals of $\Gamma$  are composed of constant blocks of the form 
    \begin{equation}
        \label{eq:predMat:specCoeff}
        A^{i}B
    \end{equation}
    where $i$ is the diagonal number ($0$ is the main diagonal).
    This constant-diagonal structure means that $\Gamma$ is a block-Toeplitz matrix.
    To create the matrix symbol for $\Gamma$, form the trigonometric polynomial of the Fourier series using the diagonal blocks \eqref{eq:predMat:specCoeff} as the coefficients
    \begin{equation}
        \label{eq:predMat:trigPoly}
        P_{\Gamma}(z) = \sum_{i=0}^{\infty} A^iBz^{-i}
    \end{equation}
    where $z \in \mathbb{T}$.
    Since $B$ is a constant matrix, $B$ can be extracted from the summation leaving
    $\sum_{i=0}^{\infty} A^i z^{-i}$.
    For a system that is Schur-stable, this summation is a Neumann series that converges to
    $z( zI - A )^{-1}$ \cite[\S 3.4]{Peterson2012}.
    Substituting this into~\eqref{eq:predMat:trigPoly} then produces the matrix symbol $z(zI - A)^{-1}B$.
    The spectral coefficients \eqref{eq:predMat:specCoeff} are absolutely summable, so $P_{\Gamma}$ is in the Wiener class meaning that $P_{\Gamma} \in \Linf$ and is continuous and $2\pi$-periodic, leading to $P_{\Gamma} \in \Cpi$.
\end{proof}

For $\Gamma$, the resulting matrix symbol is a time-shifted version of the dynamical system.
 The assumption of Schur-stability of the system is necessary, since if the $A$ matrix were to have eigenvalues outside the unit circle, the Fourier series would no longer converge and the symbol would be unbounded.
 
One of the useful properties of Toeplitz matrices is the relation between the spectrum for $T_{n}$ and the spectrum of its matrix symbol.
 Specifically, the distribution of the spectrum of the matrix symbol evaluated on $\mathbb{T}$ is the same as the distribution of the spectrum for $T_{n}$ as $n \to \infty$, and the spectrum of $T_{n}$ will always be contained in the spectrum of its symbol.
 This means that we can utilize $P_{\Gamma}$ to find the distribution of the singular values of $\Gamma$ and bound them.
\begin{proposition}
    \label{prop:dense:predSVD}
    Let $G_{s}$ be a Schur-stable system predicted over a horizon of length $N$, then the following are true:
    \begin{enumerate}[label={(\alph*)},ref={\thetheorem(\alph*)}]
        \item $ \sigma_{min}( G_s ) \leq  \sigma(\Gamma) \leq \Hinfnorm{ G_s }$ 
            \label{thm:dense:predSVD:bounds}
        \item $ \underset{N \to \infty}{\lim} \kappa(\Gamma) = \kappa( G_{s}) $
            \label{thm:dense:predSVD:condLimit}
        \item $ \sigma(\Gamma) \approx \bigcup_{\omega \in \Omega} \sigma(G_s(e^{j \omega})) $
        
            with 
            $ \Omega \coloneqq \left\{ \omega: \omega=-\frac{\pi}{2} + \frac{2\pi}{N}i,\ i \in \mathbb{Z}_{[0,N-1]}\right\} $
            \label{thm:dense:predSVD:specEst}
    \end{enumerate}
\end{proposition}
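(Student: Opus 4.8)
The plan is to push everything through the matrix symbol $P_{\Gamma}$ produced by Lemma~\ref{lem:gammaSymbol} and then invoke the standard dictionary between the singular values of a truncated block-Toeplitz matrix and those of its symbol. The observation that ties the symbol to the system is that $P_{\Gamma}(z) = z G_{s}(z)$ with $\abs{z}=1$ on $\mathbb{T}$, so multiplication by $z$ is unitary and $\sigma_i\!\big(P_{\Gamma}(z)\big) = \sigma_i\!\big(G_{s}(z)\big)$ for every $z\in\mathbb{T}$ and every $i$. Hence the extreme singular values of $P_{\Gamma}$ coincide with those of $G_s$ on the circle: $\sigma_{max}(P_{\Gamma}) = \sup_{z\in\mathbb{T}}\sigma_1(G_s(z)) = \Hinfnorm{G_s}$ (the usual characterization of the $H_\infty$ norm of a Schur-stable system as the peak gain of the frequency response) and $\sigma_{min}(P_{\Gamma}) = \sigma_{min}(G_s)$.

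For (a), the upper bound is the easy half: $\Gamma=\Gamma_N$ is a finite compression of the bounded Toeplitz operator with continuous symbol $P_{\Gamma}\in\Cpi$, so $\sigma_1(\Gamma) = \norm{\Gamma}_2 \leq \norm{P_{\Gamma}}_{\Linf} = \sigma_{max}(P_{\Gamma}) = \Hinfnorm{G_s}$, and therefore every $\sigma_i(\Gamma)\leq\Hinfnorm{G_s}$. For the lower bound I would invoke the singular-value analogue of the Szeg\H{o} distribution theorem for block-Toeplitz families (Avram--Parter / Tilli, as collected in \cite{Gutierrez-Gutierrez2012_blockSurvey}): since $P_{\Gamma}$ is continuous and, under Schur-stability, $(zI-A)^{-1}$ is nonsingular on $\mathbb{T}$ so that $P_{\Gamma}(z)$ retains full column rank there, the singular values of $\Gamma_N$ stay in the band spanned by the symbol, yielding $\sigma_i(\Gamma)\geq\sigma_{min}(P_{\Gamma})=\sigma_{min}(G_s)$. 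I expect this lower bound to be the main obstacle: controlling the \emph{smallest} singular value is the delicate direction (for a generic continuous symbol the finite sections can develop arbitrarily small singular values when the symbol has nonzero winding), so the argument must genuinely exploit the lower-triangular block structure of $\Gamma$ together with the full-rank property of $P_{\Gamma}$ on $\mathbb{T}$ -- a Wiener--Hopf/spectral-factorization argument, or a direct frequency-domain energy estimate $\norm{\Gamma u}_2 \geq \sigma_{min}(G_s)\norm{u}_2$, is the route I would take, while checking whether a minimum-phase-type condition is implicitly needed for the bound to hold at finite $N$ rather than only in the limit.

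Part (b) then follows by squeezing. Writing $\kappa(\Gamma_N) = \sigma_1(\Gamma_N)/\sigma_k(\Gamma_N)$ with $k=\min\{Nn,Nm\}$ (so $\norm{\Gamma_N^{-1}}_2$ is the norm of the pseudoinverse when $n>m$), the distribution theorem gives $\sigma_1(\Gamma_N)\to\sigma_{max}(P_{\Gamma})$ and $\sigma_k(\Gamma_N)\to\sigma_{min}(P_{\Gamma})$ as $N\to\infty$; combined with the first paragraph this gives $\lim_{N\to\infty}\kappa(\Gamma_N) = \sigma_{max}(P_{\Gamma})/\sigma_{min}(P_{\Gamma}) = \Hinfnorm{G_s}/\sigma_{min}(G_s)$. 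This is exactly $\kappa(G_s)=\Hinfnorm{G_s}\Hinfnorm{G_s^{-1}}$ once one notes $\Hinfnorm{G_s^{-1}} = \sup_{z\in\mathbb{T}}\norm{G_s(z)^{-1}}_2 = 1/\sigma_{min}(G_s)$, reading $G_s^{-1}$ as a left inverse when $n>m$.

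For (c), I would replace $\Gamma_N$ by a circulant-type matrix of the same size that is asymptotically equivalent to it in Gray's sense (\cite{Gray2005}); the singular values of such a matrix are obtained by sampling $\sigma(P_{\Gamma}(\cdot)) = \sigma(G_s(\cdot))$ at the $N$ equally spaced frequencies attached to that circulant embedding, which produces the grid $\Omega$ -- the fixed $-\pi/2$ offset coming from the particular shifted circulant appropriate to the strictly-lower-triangular block structure of $\Gamma$. Asymptotic equivalence then makes the two multisets of singular values agree up to an error vanishing as $N\to\infty$, which is the content of the $\approx$. Pinning down the exact grid, the offset in particular, is the one point here where I would need to be careful about the circulant construction rather than quote a black-box result.
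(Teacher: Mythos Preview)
Your outline is correct in spirit, but the paper takes a shorter path that dissolves precisely the obstacle you flag as the main difficulty. Instead of working with the singular values of $\Gamma$ directly, the paper passes to $\ctrans{\Gamma}\Gamma$ and observes that, because $\Gamma$ is lower-triangular block Toeplitz, the product $\ctrans{\Gamma}\Gamma$ is itself a Hermitian block Toeplitz matrix with symbol $\ctrans{G_s}G_s\in\Cpi$ (this is \cite[Lemma~4.5]{Gutierrez-Gutierrez2012_blockSurvey}, the same triangular-product fact used later for $H_{cQ}$). For a Hermitian Toeplitz matrix with continuous symbol the two-sided eigenvalue inclusion $\lambda_{min}(\text{symbol})\le\lambda_i(T_n)\le\lambda_{max}(\text{symbol})$ holds at every finite $n$, so both halves of~(a) drop out immediately from $\sigma_i(\Gamma)^2=\lambda_i(\ctrans{\Gamma}\Gamma)$ and $\lambda(\ctrans{G_s}G_s)=\sigma(G_s)^2$. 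Parts~(b) and~(c) are then handled the same way, via the eigenvalue asymptotics of $\ctrans{\Gamma}\Gamma$ from \cite{Miranda2000}.

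The advantage of the paper's route is exactly what you anticipated: by moving to a Hermitian Toeplitz object it avoids the winding-number pathology for the smallest singular value of a non-Hermitian section, so no Wiener--Hopf factorization, minimum-phase hypothesis, or bespoke energy estimate is needed, and the lower bound in~(a) is genuinely a finite-$N$ statement rather than only a limit. Your direct approach via Avram--Parter/Tilli would recover the distribution statement~(c) and the limit~(b), but does not by itself give the finite-$N$ lower bound; the extra arguments you sketch could be made to work, but they are unnecessary once one notices that $\ctrans{\Gamma}\Gamma$ is already Toeplitz. Your circulant-embedding reading of~(c) and the paper's direct appeal to \cite{Miranda2000} on $\ctrans{\Gamma}\Gamma$ are essentially two packagings of the same asymptotic equivalence.
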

\begin{proof}
    $\Gamma$ is lower-triangular, so the matrix $\ctrans{\Gamma} \Gamma$ is also Toeplitz with the matrix symbol $\ctrans{G_s} G_s \in \Cpi$ \cite[Lemma 4.5]{Gutierrez-Gutierrez2012_blockSurvey} since $\ctrans{z} z = 1$ for $z \in \mathbb{T}$.
    \begin{enumerate}[label={(\alph*)},ref={\alph*}]
        \item Since $\ctrans{\Gamma} \Gamma$ is Toeplitz with its symbol in $\Cpi$, its spectrum is upper bounded by
        $\sigma(\Gamma_n) \leq \sqrt{\lambda_{max}(\ctrans{G_{s}} G_{s})}$.
        Applying $\lambda(\ctrans{G_s} G_s) = \sigma(G_s)^{2}$ then produces the inequality $\sigma_1(\Gamma) \leq \sigma_{max}(G_s)$.
        Then note that the $H_{\infty}$ norm of a system is its largest singular value, giving the final inequality.
        
        The proof for the lower bound follows the same steps.
        \item Taking the limit of the condition number as $N\to \infty$ gives:
        \begin{equation*}
            \underset{N \to \infty}{\lim} \kappa(\Gamma) 
            = 
            \underset{N \to \infty}{\lim} \frac{\sigma_{max}(\Gamma)}{\sigma_{min}(\Gamma)}
            = \frac{\sigma_{max}(G_{s})}{\sigma_{min}(G_{s})}
            = \kappa( G_{s} ).
        \end{equation*}
        \item It is known that the singular values of matrix $\Gamma$ are related to the eigenvalues of $\ctrans{\Gamma} \Gamma$ through $\sigma(\Gamma)^2 = \lambda(\ctrans{\Gamma} \Gamma)$.
        From \cite{Miranda2000}, the eigenvalues of $\ctrans{\Gamma} \Gamma$ can be estimated as $N \to \infty$ by finding the eigenvalues of the matrix symbol $\ctrans{G_{s}} G_{s}$ as it is evaluated around the unit circle, e.g.
        \begin{equation}
            \label{eq:predMat:proofSvd}
            \lambda(\ctrans{\Gamma} \Gamma) \approx \bigcup_{\omega \in \Omega} \lambda{(\ctrans{G_{s}(e^{j \omega})} G_{s}(e^{j \omega}))}.
        \end{equation}
        Since the right-hand side of \eqref{eq:predMat:proofSvd} evaluates to a matrix at every point $\omega$, we can rewrite \eqref{eq:predMat:proofSvd} as
        $$
        \sigma(\Gamma)^2 \approx \bigcup_{\omega \in \Omega} \sigma{(G_{s}(e^{j \omega}))}^2.
        $$
        Taking the square root of both sides gives the final result.
    \end{enumerate}
\end{proof}

\begin{figure*}[t!]
    \begin{equation}
    \label{mat:equal:HQ}
    \bar{H}_{Q} \coloneqq
    \begin{bmatrix}
    \sum_{i=0}^{N-1} \ctrans{B} \ctrans{(A^{i})} Q A^{i} B   & \sum_{i=0}^{N-2} \ctrans{B} \ctrans{(A^{i+1})} Q A^{i} B & \sum_{i=0}^{N-3} \ctrans{B} \ctrans{(A^{i+2})} Q A^{i} B & \cdots & \ctrans{B} \ctrans{(A^{N-1})} Q B\\
    \sum_{i=0}^{N-2} \ctrans{B} \ctrans{(A^{i})} Q A^{i+1} B & \sum_{i=0}^{N-2} \ctrans{B} \ctrans{(A^{i})} Q A^{i} B   & \sum_{i=0}^{N-3} \ctrans{B} \ctrans{(A^{i+1})} Q A^{i} B & \cdots & \ctrans{B} \ctrans{(A^{N-2})} Q B\\
    \sum_{i=0}^{N-3} \ctrans{B} \ctrans{(A^{i})} Q A^{i+2} B & \sum_{i=0}^{N-3} \ctrans{B} \ctrans{(A^{i})} Q A^{i+1} B & \sum_{i=0}^{N-3} \ctrans{B} \ctrans{(A^{i})} Q A^{i} B   & \cdots &  \ctrans{B} \ctrans{(A^{N-3})} Q B\\
    \vdots & \vdots & \vdots & \ddots & \vdots \\
    \ctrans{B} Q A^{N-1} B & \ctrans{B} Q A^{N-2} B & \ctrans{B} Q A^{N-3} B & \cdots & \ctrans{B} Q B
    \end{bmatrix}
    \end{equation}
    \hrulefill
\end{figure*}

Proposition~\ref{prop:dense:predSVD} shows that the spectral properties of the prediction matrix $\Gamma$ can be related to the transfer function $G_{s}$ sampled around the unit circle.
 This allows for the singular values of $\Gamma$ to be both bounded and estimated by performing analysis on $G_{s}(\cdot)$ instead of actually forming $\Gamma$.




\subsection{Condensed Primal Hessian Matrix}
\label{sec:specRes:condHess}

The Hessian of the MPC problem formulation in~\eqref{eq:mpc:condMPC} can be split into four distinct parts
 \begin{equation}
    \label{eq:dense:primalHessian:MatrixSplitting}
     H_c \coloneqq H_Q + H_S + H_P + H_R
 \end{equation}
 where $H_Q$, $H_S$, $H_R$ and $H_P$ are the parts that contain the matrices $Q$, $S$, $R$ and $P$ respectively.
 In this work, we allow for arbitrary $Q$ and $R$ matrices, while presenting three specific cases for $S$ and $P$:
 \begin{enumerate}[label=Case \arabic*), ref=\arabic*, leftmargin=*]
     \item $P = Q$ and $S = 0$ \label{case:noPnoS}
     \item $P$ is the solution of the discrete-time Lyapunov equation $\trans{A} P A + Q = P$ and $S=0$ \label{case:Plyap}
     \item Arbitrary $S$ \label{case:withS}
 \end{enumerate}
 The specific selection of $P$ in Case~\ref{case:Plyap} is examined because it is commonly used to guarantee asymptotic stability of the closed-loop MPC controller \cite{Mayne2000_StabilitySurvey}.

\subsubsection*{Case \ref{case:noPnoS}}

Let the condensed Hessian matrix for this case be $H_{cQ}$.
 The assumption that $S = 0$ and $P = Q$ means that $H_S = 0$ and $H_P = 0$, making $H_{cQ} \coloneqq \bar{H}_{Q} + H_R$ where 
 $\bar{H}_{Q} \coloneqq \ctrans{\Gamma} \tilde{Q} \Gamma$ and $\tilde{Q} \coloneqq I_{N} \otimes Q$.
 Examining the structure of $\bar{H}_{Q}$ in \eqref{mat:equal:HQ}, it can be seen that the each diagonal possess a common structure, with the inconsistency being the summation end point for each entry.
 It turns out that $\bar{H}_{Q}$ is a Toeplitz matrix when examined as $N \to \infty$, and when it is added with $H_R$, the resulting matrix $H_{cQ}$ is a Toeplitz matrix represented by the generating symbol in Lemma~\ref{lem:HqSymbol}.

\begin{lemma}
    \label{lem:HqSymbol}
    Let $S=0$, $P = Q$ and $P_{\Gamma}$ be the matrix symbol from Lemma~\ref{lem:gammaSymbol} for a Schur-stable system, then the matrix $H_{cQ}$ is a Toeplitz matrix with the matrix symbol $P_{H_{cQ}} \in \Cpi$ where
    \begin{equation*}
        \label{eq:dense:PQ:Hq:genFunc}
        P_{H_{cQ}}(z) \coloneqq \ctrans{P_{\Gamma}(z)} Q P_{\Gamma}(z) + R \quad
        \forall z \in \mathbb{T}
    \end{equation*}
\end{lemma}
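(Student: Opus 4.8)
The plan is to reduce this statement to the behaviour of $\ctrans{\Gamma}\Gamma$ already exploited in the proof of Proposition~\ref{prop:dense:predSVD}, by absorbing the state weight into the prediction matrix via a square-root factorization. Since $S=0$, positive definiteness of the cost weight $\bigl[\begin{smallmatrix} Q & 0\\ 0 & R\end{smallmatrix}\bigr]$ forces the principal submatrix $Q$ to be positive definite, so $\sqrtm{Q}$ exists and is symmetric. Because $P=Q$ and $S=0$ we have $H_S = H_P = 0$, hence $H_{cQ} = \bar H_Q + H_R$ with $\bar H_Q = \ctrans{\Gamma}\tilde Q\Gamma$ and $H_R = \bar R = I_N\otimes R$. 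Writing $\tilde Q = (I_N\otimes\sqrtm{Q})(I_N\otimes\sqrtm{Q})$ gives the factorization $\bar H_Q = \ctrans{L}L$ where $L \coloneqq (I_N\otimes\sqrtm{Q})\,\Gamma$, which is exactly of the form to which the tools used for $\ctrans{\Gamma}\Gamma$ apply.

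Next I would verify that $L$ is a truncated lower-triangular block-Toeplitz matrix with a Wiener-class symbol. Left-multiplying $\Gamma$ by the constant block-diagonal matrix $I_N\otimes\sqrtm{Q}$ scales every block row by the same $\sqrtm{Q}$, so the $i$-th block diagonal of $L$ is the constant block $\sqrtm{Q}A^iB$; thus $L$ is lower-triangular block-Toeplitz and, by the construction in Lemma~\ref{lem:gammaSymbol}, its matrix symbol is $\sqrtm{Q}\,P_{\Gamma}(z)$. Since $\sum_{i\ge 0}\norm{\sqrtm{Q}A^iB} \le \Twonorm{\sqrtm{Q}}\sum_{i\ge 0}\norm{A^iB} < \infty$ by Schur-stability (the same Neumann-series bound used in Lemma~\ref{lem:gammaSymbol}), this symbol is absolutely summable, hence $\sqrtm{Q}P_{\Gamma}\in\Cpi$.

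I would then invoke the block-Toeplitz-product result already used for $\ctrans{\Gamma}\Gamma$: since $L$ is lower-triangular block-Toeplitz with symbol $\psi\coloneqq\sqrtm{Q}P_{\Gamma}\in\Cpi$ and $\ctrans{z}z = 1$ for $z\in\mathbb{T}$, the matrix $\bar H_Q = \ctrans{L}L$ is block-Toeplitz (asymptotically, as $N\to\infty$) with symbol $\ctrans{\psi(z)}\psi(z) = \ctrans{P_{\Gamma}(z)}\,Q\,P_{\Gamma}(z)$ by \cite[Lemma~4.5]{Gutierrez-Gutierrez2012_blockSurvey}. The matrix $H_R = I_N\otimes R$ is block-diagonal with constant block $R$, hence trivially block-Toeplitz with the constant symbol $R$. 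Adding two block-Toeplitz matrices adds their symbols, so $H_{cQ} = \bar H_Q + H_R$ is block-Toeplitz with symbol $\ctrans{P_{\Gamma}(z)}QP_{\Gamma}(z) + R$; being a finite sum of products of functions in $\Cpi$, this map is continuous, bounded and $2\pi$-periodic, i.e. $P_{H_{cQ}}\in\Cpi$, which is the claim.

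The main obstacle is the same subtlety present (and glossed over) in the proof of Proposition~\ref{prop:dense:predSVD}: for finite $N$ the truncation $\ctrans{L}_NL_N$ is not literally a Toeplitz matrix, as the summation endpoints in \eqref{mat:equal:HQ} drift with block position, so ``$H_{cQ}$ is Toeplitz'' must be read in the limiting/infinite-operator sense, and one must be careful that the hypotheses of the cited lemma (triangularity of $L$, membership of $\psi$ in the Wiener class) are genuinely met so that the symbol of $\ctrans{L}L$ is exactly $\ctrans{\psi}\psi$ with no residual correction term. Once the square-root factorization and the diagonal-block bookkeeping are set up, the remainder is routine.
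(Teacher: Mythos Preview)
Your proof is correct and takes essentially the same route as the paper: both identify $H_R$ as Toeplitz with constant symbol $R$, both invoke \cite[Lemma~4.5]{Gutierrez-Gutierrez2012_blockSurvey} on the upper-triangular--times--lower-triangular product to obtain the symbol $\ctrans{P_\Gamma}QP_\Gamma$ for $\bar H_Q$, and both add the symbols. The only cosmetic difference is that you factor $\tilde Q$ through $\sqrtm{Q}$ and apply the lemma to $\ctrans{L}L$, whereas the paper applies it directly to the triple product $\ctrans{\Gamma}\tilde Q\Gamma$; your detour is valid (and makes the Wiener-class verification explicit) but unnecessary, and it introduces a mild dependence on $Q\succ 0$ that the direct argument avoids.
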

\begin{proof}
    $H_R \coloneqq I_N \otimes R$ is a Toeplitz matrix with symbol $P_R(z) \coloneqq R$.
    Under the assumptions $S = 0$ and $P = Q$, $H_S = 0$ and $H_P = 0$.
    Since $\Gamma$ is a lower-triangular matrix and $\ctrans{\Gamma}$ is an upper-triangular matrix, the product $\ctrans{\Gamma} \tilde{Q} \Gamma$ is Toeplitz with generating symbol
    $
    \ctrans{P_{\Gamma}} Q P_{\Gamma}
    $
    \cite[Lemma 4.5]{Gutierrez-Gutierrez2012_blockSurvey}.
    Additionally, Toeplitz structure is preserved over addition of two Toeplitz matrices, meaning matrix $H_{cQ}$ is then Toeplitz, with the symbol given in the Lemma.
\end{proof}

The matrix $H_{cQ}$ is the combination of three different Toeplitz matrices using addition and multiplication.
 The Toeplitz structure of a matrix is always preserved when adding/subtracting two Toeplitz matrices, and the new matrix symbol is simply the addition/subtraction of the two symbols.
 Unfortunately, the multiplication of Toeplitz matrices does not generally result in a Toeplitz matrix.
 However, when multiplied as $ \trans{L} T L$ (with $L$ lower-triangular), the Toeplitz structure is preserved and the new symbol is simply the multiplication of the symbols.

Since $H_{cQ}$ is Toeplitz with the symbol in Lemma~\ref{lem:HqSymbol}, we can estimate and bound the eigenvalues $H_{cQ}$ using the symbol.
\begin{theorem}
    \label{thm:dense:PQ:hessEig}
    Let $H_{cQ}$ be the condensed Hessian matrix for a Schur-stable system predicted over a horizon of length $N$ with $S = 0$, $P=Q$ and the matrix symbol $P_{H_{cQ}}$ given in Lemma~\ref{lem:HqSymbol}, then the following are true:
    \begin{enumerate}[label={(\alph*)},ref={\thetheorem(\alph*)}]
        \item $ \lambda_{min}( P_{H_{cQ}} ) \leq  \lambda(H_{cQ}) \leq \lambda_{max}( P_{H_{cQ}}) $
        \label{thm:dense:PQ:hessEig:bounds}
        \item $ \underset{N \to \infty}{\lim} \kappa(H_{cQ}) = \kappa( P_{H_{cQ}} ) $
        \label{thm:dense:PQ:hessEig:condLimit}
        \item $ \lambda( H_{cQ} ) \approx \bigcup_{\omega \in \Omega} \lambda(P_{H_{cQ}}(e^{j \omega})) $
        
        with 
        $ \Omega \coloneqq \left\{ \omega: \omega=-\frac{\pi}{2} + \frac{2\pi}{N}i, \quad i \in \mathbb{Z}_{[0,N-1]}\right\} $
        \label{thm:dense:PQ:hessEig:specEst}
    \end{enumerate}
\end{theorem}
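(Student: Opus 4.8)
The plan is to follow the same route as the proof of Proposition~\ref{prop:dense:predSVD}, but applied directly to the Hermitian matrix $H_{cQ}$ rather than to a Gram matrix $\ctrans{\Gamma}\Gamma$. First I would record two structural facts. By Lemma~\ref{lem:HqSymbol}, $H_{cQ}$ is (asymptotically) Toeplitz with symbol $P_{H_{cQ}} = \ctrans{P_{\Gamma}} Q P_{\Gamma} + R \in \Cpi$, and since $Q = \trans{Q}$ and $R = \trans{R}$ this symbol is Hermitian-valued on $\mathbb{T}$, so the quantities $\lambda_{min}(P_{H_{cQ}})$ and $\lambda_{max}(P_{H_{cQ}})$ are well defined and, by continuity of $P_{H_{cQ}}$ on the compact set $\mathbb{T}$, are attained. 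Positive-definiteness of the stage cost, combined with part~\ref{thm:dense:PQ:hessEig:bounds}, will give $\lambda_{min}(P_{H_{cQ}}) > 0$, so that $\kappa(P_{H_{cQ}})$ in part~\ref{thm:dense:PQ:hessEig:condLimit} makes sense.

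For part~\ref{thm:dense:PQ:hessEig:bounds} I would invoke the containment theorem for truncations of a Hermitian block-Toeplitz operator with continuous symbol (the same result from \cite{Gutierrez-Gutierrez2012_blockSurvey,Miranda2000} used for $\ctrans{\Gamma}\Gamma$ in Proposition~\ref{prop:dense:predSVD}): for every $n$, every eigenvalue of the $n$-th truncation lies in $[\lambda_{min}(P_{H_{cQ}}),\lambda_{max}(P_{H_{cQ}})]$, and passing to the limit yields the stated inclusion for $\lambda(H_{cQ})$. For part~\ref{thm:dense:PQ:hessEig:condLimit} I would use the companion fact that the extremal eigenvalues of the truncations converge to those of the symbol, so $\lambda_1((H_{cQ})_n) \to \lambda_{min}(P_{H_{cQ}})$ and $\lambda_n((H_{cQ})_n) \to \lambda_{max}(P_{H_{cQ}})$; dividing and using $\lambda_{min}(P_{H_{cQ}})>0$ gives $\lim_{N\to\infty}\kappa(H_{cQ}) = \kappa(P_{H_{cQ}})$. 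Part~\ref{thm:dense:PQ:hessEig:specEst} is the block Szeg\H{o} eigenvalue-distribution theorem of \cite{Miranda2000}: the eigenvalues of $(H_{cQ})_n$ are asymptotically distributed as the union of the eigenvalues of $P_{H_{cQ}}(e^{j\omega})$ over $\omega$, and sampling on the grid $\Omega$ (exactly as for $\ctrans{\Gamma}\Gamma$ in Proposition~\ref{prop:dense:predSVD}, but now without the final square-root step since $H_{cQ}$ is already Hermitian) gives the approximation.

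The one genuine subtlety — and the step I expect to be the main obstacle — is that $H_{cQ}$ is a true block-Toeplitz matrix only in the limit $N\to\infty$: as displayed in \eqref{mat:equal:HQ}, the $(p,q)$ block of $\bar{H}_{Q}$ carries a summation that terminates at $i = N-\max\{p,q\}$, so the actual finite-$N$ matrix differs from the $N$-truncation of the infinite Toeplitz operator with symbol $P_{H_{cQ}}$ by a "defect" concentrated near the bottom-right corner. I would dispatch this by bounding the defect: Schur-stability gives $\Twonorm{A^{i}}\le c\,\rho^{i}$ with $\rho<1$, so the $(p,q)$ defect block has norm $O\!\left(\rho^{2(N-\max\{p,q\})}\right)$, hence the defect is, up to an exponentially small term, a bounded-norm perturbation of bounded rank supported in the last few block rows and columns. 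Such perturbations leave the asymptotic eigenvalue distribution and the limits of the extremal eigenvalues unchanged — they can create at most a bounded number of outliers, which themselves converge into $[\lambda_{min}(P_{H_{cQ}}),\lambda_{max}(P_{H_{cQ}})]$ by the same bound — so the three conclusions for the Toeplitz truncations transfer verbatim to $H_{cQ}$; equivalently, one may simply observe that $H_{cQ}$ converges entrywise to the Toeplitz operator with uniformly summable tails, which is precisely the hypothesis required by the cited theorems.
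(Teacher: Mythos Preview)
Your approach is correct and coincides with the paper's: once Lemma~\ref{lem:HqSymbol} supplies the symbol $P_{H_{cQ}}\in\Cpi$, the paper simply cites the block-Toeplitz containment result \cite[Theorem~4.4]{Gutierrez-Gutierrez2012_blockSurvey} for part~(a), uses the Hermitian/normal observation $\sigma(H_{cQ})=\lambda(H_{cQ})$ to reduce $\kappa$ to an eigenvalue ratio and pass to the limit for part~(b), and invokes the distribution results of \cite{Miranda2000} for part~(c) --- exactly the three ingredients you list. The one place you go further than the paper is your third paragraph on the finite-$N$ ``defect'' in \eqref{mat:equal:HQ}: the paper never revisits this in the proof of the theorem, treating the product formula cited in Lemma~\ref{lem:HqSymbol} (\cite[Lemma~4.5]{Gutierrez-Gutierrez2012_blockSurvey}) as having already placed $H_{cQ}$ in the asymptotic-Toeplitz framework where those spectral theorems apply, so your explicit bounded-rank/exponentially-decaying perturbation argument is a justified expansion of a step the paper delegates entirely to the external reference.
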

\begin{proof}
    \begin{enumerate}[label={(\alph*)},ref={\thetheorem(\alph*)}]
        \item The spectrum of a Toeplitz matrix with its symbol in $\Cpi$ is bounded by the extremes of the spectrum of its symbol \cite[Theorem 4.4]{Gutierrez-Gutierrez2012_blockSurvey}.
        \item Note that $H_{cQ}$ is a Hermitian matrix, which means that it is also normal \cite[\S 4.1]{Horn2013}. Since it is both normal and positive semi-definite, $\sigma(H_{cQ}) = \lambda(H_{cQ})$ \cite[\S 3.1]{Horn1994}, making the condition number become $\kappa(H_{cQ}) = \frac{\lambda_{n}(H_{cQ})}{\lambda_{1}(H_{cQ})}$. Taking the limit of both sides in conjunction with the spectral bounds from part~(a) gives
        \begin{equation*}
        \underset{N \to \infty}{\lim} \kappa(H_{cQ}) 
        = 
        \kappa( P_{H_{cQ}} ).
        \end{equation*}
        \item The spectrum of a Toeplitz matrix can be estimated from its symbol using the techniques in \cite{Miranda2000}, giving this result.
    \end{enumerate}
\end{proof}

Since the eigenvalues for any finite-size Toeplitz matrix are guaranteed to be inside the spectrum of its matrix symbol, the bounds given in Theorem~\ref{thm:dense:PQ:hessEig} are horizon-independent.
 As the matrix size grows, the bounds in Theorem~\ref{thm:dense:PQ:hessEig:bounds} become tight; so equality occurs for long prediction horizons.
 The horizon at which equality occurs differs for every system, with some reaching it in short horizons (e.g.\ System~\ref{sys:jonesMorari} with equality for $N > 40$), while others require very long horizons (e.g.\ System~\ref{sys:msd} requiring $N \gg 1000$).
 This is illustrated in Figure~\ref{fig:specRes:primalHessSpec}.
 
Since the matrix symbol $P_{H_{cQ}}$ takes a complex number on the unit circle as its argument, we can view it as the discrete-time transfer function of a dynamical system.
 This means that the extremal eigenvalues can be found through the $H_{\infty}$ norm of the system and its inverse (which will always exist since $H_{cQ}$ is positive-definite).
 Additionally, the spectrum of $H_{cQ}$ can be estimated through the singular values of the discrete-time system $P_{H_{cQ}}$ since $H_{cQ}$ is Hermitian.

\begin{figure*}[t!]
    \begin{equation}
    \label{mat:Lyap:HP}
    H_{P2} \coloneqq
    \begin{bmatrix}
    \ctrans{B} \ctrans{(A^{N})} P A^{N} B   & \ctrans{B} \ctrans{(A^{N})} P A^{N-1} B   & \ctrans{B} \ctrans{(A^{N})} P A^{N-2} B   & \cdots & \ctrans{B} \ctrans{(A^{N})} P A B\\
    \ctrans{B} \ctrans{(A^{N-1})} P A^{N} B & \ctrans{B} \ctrans{(A^{N-1})} P A^{N-1} B & \ctrans{B} \ctrans{(A^{N-1})} P A^{N-2} B & \cdots & \ctrans{B} \ctrans{(A^{N-1})} P A B\\
    \ctrans{B} \ctrans{(A^{N-2})} P A^{N} B & \ctrans{B} \ctrans{(A^{N-2})} P A^{N-1} B & \ctrans{B} \ctrans{(A^{N-2})} P A^{N-2} B & \cdots & \ctrans{B} \ctrans{(A^{N-2})} P A B\\
    \vdots                                  & \vdots                                    & \vdots                          & \ddots & \vdots \\
    \ctrans{B} \ctrans{A} P A^{N} B       & \ctrans{B} \ctrans{A} P A^{N-1} B       & \ctrans{B} \ctrans{A} P A^{N-2} B       & \cdots & \ctrans{B} \ctrans{A} P A B
    \end{bmatrix}
    \end{equation}
    \hrulefill
\end{figure*}

The results in Theorem~\ref{thm:dense:PQ:hessEig} give the spectral properties of $H_{cQ}$ for arbitrary $Q$ and $R$ matrices.
 If we were to constrain the matrices to be $Q = I$ (or $Q = C^{T} C$ for systems with an output mapping) and $R = \rho I$, then the results presented in \cite[Corrollary 11.5.2]{Goodwin2005} will be recovered.

\subsubsection*{Case \ref{case:Plyap}}

We now examine the Hessian matrix that results from choosing a terminal cost matrix such that $P$ is the solution to the discrete-time Lyapunov equation.
 The matrix splitting for $H_c$ in~\eqref{eq:dense:primalHessian:MatrixSplitting} does not generally have nice properties when $P \ne Q$.
 However, with $P$ chosen as the solution to $ \trans{A} P A + Q = P$, the matrix splitting becomes $H_{cP} \coloneqq H_{cQ} + H_{P2}$ where $H_{cQ}$ is the Hessian from Case~\ref{case:noPnoS} and $H_{P2}$ is the matrix given in~\eqref{mat:Lyap:HP}.
 The structure of $H_{cP}$ is such that as $N\to \infty$, its eigenvalues converge to the eigenvalues of $H_{cQ}$.
 This means the results in Theorem~\ref{thm:dense:PQ:hessEig} can be used to bound the spectrum of $H_{cP}$.
\begin{theorem}
    \label{thm:dense:PLyap:hessEig}
    Let the system $G_{s}$ be Schur-stable with the state space matrices $(A, B, I, 0)$, and $P_{H_{cQ}}$ be defined in Lemma~\ref{lem:HqSymbol}.
    If the terminal weighting matrix $P$ is chosen as the solution to $\trans{A} P A + Q = P$, then the spectrum of the condensed Hessian matrix $H_{cP}$ has the following properties:
    \begin{enumerate}[label={(\alph*)},ref={\thetheorem(\alph*)}]
        \item $ \lambda_{min}(P_{H_{cQ}}) \leq  \lambda(H_{cP}) \leq \lambda_{max}(P_{H_{cQ}}) $ 
        \label{thm:dense:PLyap:hessEig:eigBound}
        \item $ \underset{N \to \infty}{\lim} \kappa(H_{cP}) = \kappa(P_{H_{cQ}}) $
        \label{thm:dense:PLyap:hessEig:condBound}
    \end{enumerate}
\end{theorem}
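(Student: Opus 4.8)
The plan is to show that, when $P$ solves $\trans{A}PA+Q=P$, the finite-horizon condensed Hessian $H_{cP}=H_{cQ}+H_{P2}$ is \emph{exactly} (for every finite $N$) a leading principal truncation of the infinite block-Toeplitz operator $\mathcal{T}(P_{H_{cQ}})$ generated by the symbol $P_{H_{cQ}}$ of Lemma~\ref{lem:HqSymbol}, and then to obtain parts~(a) and~(b) by the same Toeplitz spectral arguments already used in Theorem~\ref{thm:dense:PQ:hessEig}.

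First I would record that Schur-stability makes $\trans{A}PA+Q=P$ have the absolutely summable solution $P=\sum_{i=0}^{\infty}\trans{(A^{i})}QA^{i}$ (the same convergence invoked in Section~\ref{sec:specRes:condPred}). Substituting this series into the blocks of $H_{P2}$ in \eqref{mat:Lyap:HP} --- equivalently writing $H_{P2}=\ctrans{\Theta}P\Theta$ with $\Theta\coloneqq\begin{bmatrix}A^{N}B & A^{N-1}B & \cdots & AB\end{bmatrix}$ --- one sees that the block in position $(r,s)$ of $H_{P2}$ equals the sum over $i\ge N-\max\{r,s\}$ of precisely the terms that are \emph{missing} from the finitely truncated block in position $(r,s)$ of $\bar{H}_{Q}$ in \eqref{mat:equal:HQ}. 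Adding $H_{R}=I_{N}\otimes R$ on the main diagonal, each diagonal of $H_{cP}$ collapses to the constant block $\sum_{i=0}^{\infty}\ctrans{B}\ctrans{(A^{i+d})}QA^{i}B$ (with an extra $+R$ when $d=0$), which is exactly the $d$-th Fourier coefficient of $\ctrans{P_{\Gamma}}QP_{\Gamma}+R=P_{H_{cQ}}$; hence $H_{cP}=[\mathcal{T}(P_{H_{cQ}})]_{Nm}$. The conceptual reason behind this bookkeeping is the cost-to-go identity $\trans{x_{N}}Px_{N}=\sum_{k\ge N}\trans{x_{k}}Qx_{k}$ along the free response: the $N$-horizon cost with the Lyapunov terminal penalty equals the infinite-horizon quadratic cost with all inputs past step $N-1$ fixed at zero, whose Hessian in $(u_{0},\dots,u_{N-1})$ is the leading $Nm\times Nm$ block of $\mathcal{T}(P_{H_{cQ}})$.

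Given this, part~(a) follows immediately: since $P_{H_{cQ}}\in\Cpi$ by Lemma~\ref{lem:HqSymbol}, the spectrum of any finite truncation of $\mathcal{T}(P_{H_{cQ}})$ lies in $[\lambda_{min}(P_{H_{cQ}}),\lambda_{max}(P_{H_{cQ}})]$ by \cite[Theorem~4.4]{Gutierrez-Gutierrez2012_blockSurvey}, exactly as in the proof of Theorem~\ref{thm:dense:PQ:hessEig:bounds}. For part~(b), $H_{P2}=\ctrans{\Theta}P\Theta\succeq 0$ because $P\succ 0$ (it solves the Lyapunov equation with $Q\succ 0$), so $H_{cP}\succeq H_{cQ}\succ 0$; thus $H_{cP}$ is Hermitian positive definite and its condition number is the ratio of its largest to its smallest eigenvalue. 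Letting $N\to\infty$, the extreme eigenvalues of the truncations converge to $\lambda_{max}(P_{H_{cQ}})$ and $\lambda_{min}(P_{H_{cQ}})$ respectively (the Toeplitz limit theorems of \cite{Gutierrez-Gutierrez2012_blockSurvey,Miranda2000}, as in Theorem~\ref{thm:dense:PQ:hessEig:condLimit}), which gives $\lim_{N\to\infty}\kappa(H_{cP})=\kappa(P_{H_{cQ}})$.

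The main obstacle is this first step: verifying that the extra blocks contributed by $H_{P2}$ complete \emph{every} truncated sum in $\bar{H}_{Q}$ --- on the main diagonal, the super-diagonals and the sub-diagonals --- to the same full infinite sum, so that $H_{cP}$ is genuinely block-Toeplitz with symbol $P_{H_{cQ}}$. This is stronger than the easy observation that $H_{P2}$ has rank at most $n$ and is therefore a fixed-rank positive semidefinite perturbation of the (only asymptotically Toeplitz) matrix $H_{cQ}$ from Case~\ref{case:noPnoS}: the rank bound alone would reproduce the limiting statement~(b) but not the horizon-independent sandwich~(a). Once the exact truncation structure is in place, both parts are corollaries of the Toeplitz machinery already developed for Theorem~\ref{thm:dense:PQ:hessEig}.
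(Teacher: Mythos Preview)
Your proposal is correct and follows essentially the same approach as the paper: substitute the series $P=\sum_{i\ge 0}\trans{(A^{i})}QA^{i}$ into $H_{P2}$, observe that these terms complete the truncated sums in $\bar{H}_{Q}$ so that $H_{cP}$ is genuinely block-Toeplitz with symbol $P_{H_{cQ}}$, and then invoke the same Toeplitz spectral bounds used for Theorem~\ref{thm:dense:PQ:hessEig}. Your write-up is in fact more careful than the paper's --- you spell out the $(r,s)$-block bookkeeping via $H_{P2}=\ctrans{\Theta}P\Theta$, give the cost-to-go interpretation, and explicitly justify positive definiteness for part~(b) --- whereas the paper compresses all of this into a short paragraph.
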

\begin{proof}
    Since $P$ is the solution to the discrete-time Lyapunov equation, $P$ can be written as the infinite sum
    $ P = \sum_{i=0}^{\infty} \trans{(A^{i})} Q A^{i} $.
    Substituting this into $H_{P2}$ produces a matrix which has the same entries as $H_{cQ}$, except with the starting/ending points of the summations changed.
    In this new matrix, the summations start where the summations in $H_{cQ}$ stop, and end at $\infty$ (e.g.\ the upper-left corner starts at $N$ and goes to $\infty$, while the lower-right corner starts at $1$ and goes to $\infty$).
    This means that adding $H_{P2}$ to $H_{cQ}$ changes the summations in $H_{cQ}$ so that all of them now go from $0$ to $\infty$.
    This results in $H_{cP}$ becoming a Toeplitz matrix, with the same matrix symbol as $H_{cQ}$ in Lemma~\ref{lem:HqSymbol}.
    Then for $N \to \infty$, the spectrum of $H_{cP}$ converges to the spectrum of $H_{cQ}$.
\end{proof}

The results in Theorem~\ref{thm:dense:PLyap:hessEig} are intuitive, since the terminal cost $\trans{x_{N}} P x_{N}$ is designed to capture the value of the cost after the prediction horizon and should therefore disappear when going to infinite horizons.
 For the CLQR formulation~\eqref{eq:mpc:linMPC}, selecting $P$ as the solution to the discrete-time Lyapunov equation will exactly capture the cost's tail.
 When condensed into~\eqref{eq:mpc:condMPC}, $H_{P2}$ extends the summations in each entry of $H_{cQ}$ to infinity in order to capture this tail.
 This means that as $N \to \infty$, the effect of $H_{P2}$ on $H_{cQ}$ will diminish, until eventually $H_{P2}$ vanishes.
 This leads to the horizon-independent result in Theorem~\ref{thm:dense:PLyap:hessEig} that the extremal eigenvalues and condition number of $H_{cP}$ will be the same as those for $H_{cQ}$.
 Note though that the finite-horizon spectrum of $H_{cP}$ may be different from that of $H_{cQ}$, as shown by the difference between them (represented by triangles and circles respectively) at low horizons in Figure~\ref{fig:specRes:primalHessSpec:sys1}.


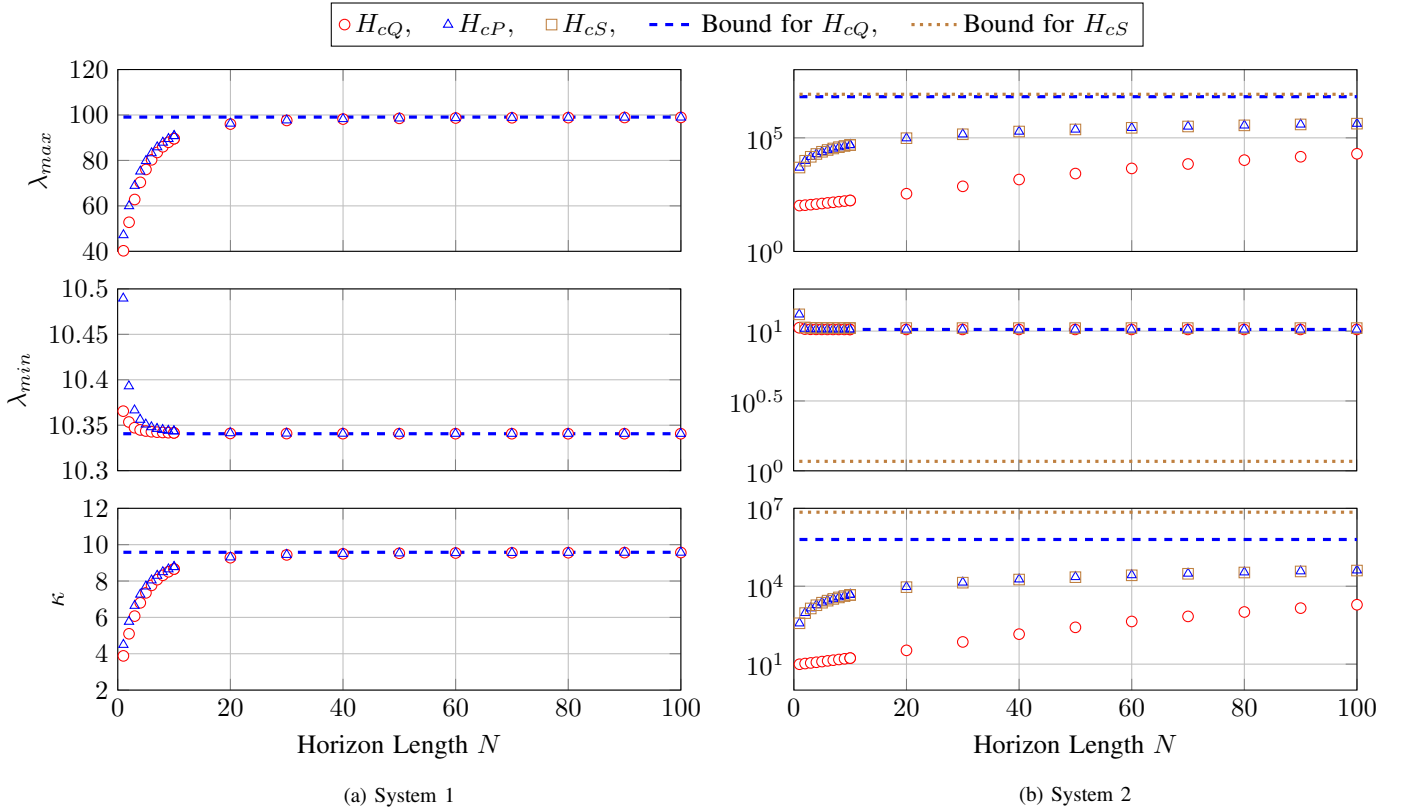
\begin{figure*}[tb]
    \centering
    
    \pgfplotsset{Q_inst/.style={only marks, red, every mark/.append style={solid, fill=white}, mark = *}}
    \pgfplotsset{Q_limit/.style={dashed, very thick, blue}}
    \pgfplotsset{Lyap_inst/.style={only marks, blue, every mark/.append style={solid, fill=white}, mark = triangle}}
    \pgfplotsset{S_inst/.style={only marks, brown, every mark/.append style={solid, fill=white}, mark = square}}
    \pgfplotsset{S_limit/.style={dotted, very thick, brown}}
    
    \begin{tikzpicture}
        \begin{groupplot}[group style = {group name=plots, group size = 2 by 3, horizontal sep=1.5cm, vertical sep=0.5cm}]
        
        \pgfplotstableread[col sep=comma]{figures/data/spectrum_Primal_JonesMorari_Inst.csv}{\JMinstData}
        \pgfplotstableread[col sep=comma]{figures/data/spectrum_Primal_JonesMorari_Asymp.csv}{\JMasympData}
        \pgfplotstableread[col sep=comma]{figures/data/spectrum_Primal_MassSpring_Inst.csv}{\MSDinstData}
        \pgfplotstableread[col sep=comma]{figures/data/spectrum_Primal_MassSpring_Asymp.csv}{\MSDasympData}

        \nextgroupplot[xmin   = 0,
                       xmax   = 100,
                       ymin   = 40,
                       ymax   = 120,
                       grid   = major,
                       ylabel = {$\lambda_{max}$ },
                       xticklabels=\empty,
                       height = 4cm,
                       width  = 0.5\textwidth]
    
            \addplot[Q_inst] table [x=Horizon, y=maxE_Q] {\JMinstData};
            
            \addplot[Q_limit] table [x=Horizon, y=maxE_Q] {\JMasympData};
            
            \addplot[Lyap_inst] table [x=Horizon, y=maxE_Lyap] {\JMinstData};
            

        \nextgroupplot[xmin   = 0,
                       xmax   = 100,
                       ymin   = 1e0,
                       ymax   = 1e8,
                       ymode  = log,
                       grid   = major,
                       xticklabels=\empty,
                       height = 4cm,
                       width  = 0.5\textwidth,
                       legend cell align=left,
                       legend columns=-1,
                       legend to name=plotleg]
        
            \addplot[Q_inst] table [x=Horizon, y=maxE_Q] {\MSDinstData};
            \addlegendentry{$H_{cQ},\quad$}
            
            \addplot[Lyap_inst] table [x=Horizon, y=maxE_Lyap] {\MSDinstData};
            \addlegendentry{$H_{cP},\quad$}
            
    
            \addplot[S_inst] table [x=Horizon, y=maxE_S] {\MSDinstData};
            \addlegendentry{$H_{cS},\quad$}

            \addplot[Q_limit] table [x=Horizon, y=maxE_Q] {\MSDasympData};
            \addlegendentry{Bound for $H_{cQ},\quad$}

            \addplot[S_limit] table [x=Horizon, y=maxE_S] {\MSDasympData};
            \addlegendentry{Bound for $H_{cS}$}
    
        \nextgroupplot[xmin   = 0,
                       xmax   = 100,
                       ymin   = 10.3,
                       ymax   = 10.5,
                       grid   = major,
                       xticklabels=\empty,
                       ylabel = {$\lambda_{min}$},
                       height = 4cm,
                       width  = 0.5\textwidth]
            
                \addplot[Q_inst] table [x=Horizon, y=minE_Q] {\JMinstData};
                
                \addplot[Lyap_inst] table [x=Horizon, y=minE_Lyap] {\JMinstData};
            
                \addplot[Q_limit] table [x=Horizon, y=minE_Q] {\JMasympData};
                

        \nextgroupplot[xmin   = 0,
                       xmax   = 100,
                       ymin   = 1,
                       ymax   = 20,
                       ymode  = log,
                       grid   = major,
                       xticklabels=\empty,
                       height = 4cm,
                       width  = 0.5\textwidth]
        
            \addplot[Q_inst] table [x=Horizon, y=minE_Q] {\MSDinstData};
            
            \addplot[Lyap_inst] table [x=Horizon, y=minE_Lyap] {\MSDinstData};
            
            \addplot[S_inst] table [x=Horizon, y=minE_S] {\MSDinstData};
            
            \addplot[Q_limit] table [x=Horizon, y=minE_Q] {\MSDasympData};


            \addplot[S_limit] table [x=Horizon, y=minE_S] {\MSDasympData};

        \nextgroupplot[xmin   = 0,
                       xmax   = 100,
                       ymin   = 2,
                       ymax   = 12,
                       grid   = major,
                       xlabel = {Horizon Length $N$},
                       ylabel = {$\kappa$},
                       height = 4cm,
                       width  = 0.5\textwidth]
           
                \addplot[Q_inst] table [x=Horizon, y=condNum_Q] {\JMinstData};
                
                \addplot[Lyap_inst] table [x=Horizon, y=condNum_Lyap] {\JMinstData};
                
                \addplot[Q_limit] table [x=Horizon, y=condNum_Q] {\JMasympData};
                
            
        \nextgroupplot[xmin   = 0,
                       xmax   = 100,
                       ymin   = 1e0,
                       ymax   = 1e7,
                       ymode  = log,
                       grid   = major,
                       xlabel = {Horizon Length $N$},
                       height = 4cm,
                       width  = 0.5\textwidth]
            
            \addplot[Q_inst] table [x=Horizon, y=condNum_Q] {\MSDinstData};
            
            \addplot[Lyap_inst] table [x=Horizon, y=condNum_Lyap] {\MSDinstData};
            
            \addplot[S_inst] table [x=Horizon, y=condNum_S] {\MSDinstData};            
            
            \addplot[Q_limit] table [x=Horizon, y=condNum_Q] {\MSDasympData};
            
                                    
            \addplot[S_limit] table [x=Horizon, y=condNum_S] {\MSDasympData};
    
        \end{groupplot}
        
        \node at ($(plots c1r1.north west)!0.5!(plots c2r1.north east)+(0cm,0.6cm)$) {\pgfplotslegendfromname{plotleg}};
        
        
        \node[below=1.1cm of plots c1r3.south, name=label1, anchor=center] {
            \subfloat[System~\ref{sys:jonesMorari}]
              {\parbox[c][0.5cm][c]{3cm}{\hfill\label{fig:specRes:primalHessSpec:sys1}}}
         };
         
        \path let \p{lab}=(label1.center), \p{gra}=(plots c2r3.south) in
         node[name=label2]
          at (\x{gra}, \y{lab}) {
            \subfloat[System~\ref{sys:msd}]
              {\parbox[c][2pt][c]{3cm}{\hfill\label{fig:specRes:primalHessSpec:sys2}}}%
         };
    \end{tikzpicture}

    \caption{Spectral properties of the condensed primal Hessian matrix. The lines represent the bounds computed using the results in Section~\ref{sec:specRes:condHess}, and the markers represent the values of the condensed matrix at that horizon.}
    \label{fig:specRes:primalHessSpec}
\end{figure*}

\subsubsection*{Case \ref{case:withS}}
 
We now introduce a state-input cross-term weighting matrix $S$ to problem~\eqref{eq:mpc:linMPC}, and refer to the resulting Hessian matrix as $H_{cS}$.
 Unfortunately, $H_{S}$ is not Toeplitz since $\bar{S}$ contains a $0$ matrix instead of $S$ in the lower-right corner; which when multiplied with $\Gamma$ produces a row/column of zeros on the bottom/right of the matrix $H_{S}$.
 
To overcome this, we split $H_{cS}$ into two components, a nominal matrix $H_{n}$ and a correction matrix $H_{e}$, such that $H_{cS} \coloneqq H_{n} - H_{e}$.
 We let the nominal matrix be $H_{n} \coloneqq H_{cQ} + \bar{H}_{S}$, where
 $
 \bar{H}_S \coloneqq \trans{(I_N \otimes S)} \Gamma + \trans{\Gamma} (I_N \otimes S)
 $.
 This adds in an $S$ weighting term on the final state, which makes $\bar{H}_S$ Toeplitz with the matrix symbol
 $$
 P_{\bar{H}_S}(z) \coloneqq \trans{S} P_{\Gamma}(z) + \ctrans{P_{\Gamma}}(z) S \qquad \forall z \in \mathbb{T}.
 $$
 This leads to $H_{n}$ being Toeplitz as well with matrix symbol $P_{H_n} \in \Cpi$,
 \begin{equation}
    \label{eq:spec:sTerm:symHn}
     P_{H_n}(z) = P_{H_{cQ}}(z) + P_{\bar{H}_S}(z) \qquad \forall z \in \mathbb{T}.
 \end{equation}
 
The additional weighting term introduced in $H_{n}$ is then corrected for by subtracting the Hermitian matrix $H_{e}$, where
 $H_{e} \coloneqq \trans{S_{c}} \Gamma + \trans{\Gamma} S_{c}$ with 
 $S_{c} \coloneqq \begin{bmatrix}
 I_{N-1} \otimes 0 & 0\\
 0 & S
 \end{bmatrix}$.
 To understand how $H_{e}$ affects $H_{n}$, we first analyze the spectrum of $H_{e}$.
 \begin{lemma}
     \label{lem:dense:Sterm:HeRank}
     Let $A \in \mathbb{R}^{n \times n}$, $B \in \mathbb{R}^{n \times m}$ and $W_{c}$ be the state transition matrix, input matrix and the controllability Gramian respectively for the Schur-stable system $G_{s}$. If $S \neq 0$, then the rank of the matrix $H_{e}$ is at most $2m$, and for $N \to \infty$ its $2m$ non-zero eigenvalues are the $2m$ eigenvalues of 
     \begin{equation*}
         \label{eq:specRes:Sterm:eigenMatrix}
         U \coloneqq
         \begin{bmatrix}
             \trans{B} S & I \\
             \trans{S} W_{c} S & \trans{S} B
         \end{bmatrix}.
     \end{equation*}
 \end{lemma}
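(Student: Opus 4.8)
\medskip\noindent\emph{Proof strategy.}\quad
The plan is to realize $H_{e}$ as a product of three ``thin'' factors, collapse it to a fixed-size matrix with the elementary fact that $XY$ and $YX$ share the same nonzero spectrum (with algebraic multiplicities), and then pass to the limit $N\to\infty$.

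First I would use that $S_{c}$ is zero apart from its bottom-right $n\times m$ block, so $S_{c} = (\epsilon_{N}\otimes I_{n})\,S\,\trans{(\epsilon_{N}\otimes I_{m})}$, with $\epsilon_{N}\in\mathbb{R}^{N}$ the last standard basis vector. Recalling (cf.\ the proof of Lemma~\ref{lem:gammaSymbol}) that the block $(r,c)$ of $\Gamma$ is $A^{r-c}B$ for $r\ge c$ and $0$ otherwise, the last block row of $\Gamma$ is $\Gamma_{N}\coloneqq\trans{(\epsilon_{N}\otimes I_{n})}\Gamma = \begin{bmatrix} A^{N-1}B & \cdots & AB & B\end{bmatrix}$, whence $\trans{S_{c}}\Gamma = (\epsilon_{N}\otimes I_{m})\trans{S}\Gamma_{N}$. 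Collecting the two terms of $H_{e}$ then gives the exact factorization
\begin{equation*}
    H_{e} = Z M \trans{Z}, \qquad
    Z \coloneqq \begin{bmatrix} \epsilon_{N}\otimes I_{m} & \trans{\Gamma_{N}}\end{bmatrix}\in\mathbb{R}^{Nm\times(m+n)}, \qquad
    M \coloneqq \begin{bmatrix} 0 & \trans{S} \\ S & 0 \end{bmatrix}.
\end{equation*}
Since $\operatorname{rank}(M) = 2\operatorname{rank}(S)\le 2m$, the rank bound $\operatorname{rank}(H_{e})\le 2m$ follows immediately.

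For the eigenvalues I would apply the nonzero-spectrum identity to $H_{e} = Z(M\trans{Z})$ to reduce to $M\trans{Z}Z$, then factor $M = VW$ with $V\coloneqq\bigl[\begin{smallmatrix} I_{m} & 0\\ 0 & S\end{smallmatrix}\bigr]$ and $W\coloneqq\bigl[\begin{smallmatrix} 0 & \trans{S}\\ I_{m} & 0\end{smallmatrix}\bigr]$, and apply the identity once more to land on the $2m\times 2m$ matrix $W\trans{Z}ZV$, which has exactly the same nonzero eigenvalues as $H_{e}$ for every finite $N$. A direct computation of the Gram matrix gives
\begin{equation*}
    \trans{Z}Z = \begin{bmatrix} I_{m} & \trans{B} \\ B & \Gamma_{N}\trans{\Gamma_{N}}\end{bmatrix},
\end{equation*}
using $\trans{(\epsilon_{N}\otimes I_{m})}\trans{\Gamma_{N}} = \trans{B}$ (the last block of $\Gamma_{N}$ is $B$) and $\Gamma_{N}\trans{\Gamma_{N}} = \sum_{i=0}^{N-1}A^{i}B\trans{B}\trans{(A^{i})}$, the finite-horizon controllability Gramian.

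It remains to take $N\to\infty$. Schur-stability of $G_{s}$ makes the Gramian series converge, so $\Gamma_{N}\trans{\Gamma_{N}}\to W_{c}$ and hence $W\trans{Z}ZV$ converges to a $2m\times 2m$ matrix that, after swapping its two $m$-blocks by the permutation similarity $\bigl[\begin{smallmatrix} 0 & I_{m}\\ I_{m} & 0\end{smallmatrix}\bigr]$, equals $U$; continuity of eigenvalues in the matrix entries then shows the (at most $2m$) nonzero eigenvalues of $H_{e}$ converge to those of $U$. I expect the main obstacle to be purely organizational: because $S\in\mathbb{R}^{n\times m}$ is rectangular, one must keep the block dimensions straight through the two applications of the $XY/YX$ identity and the split $M = VW$, so that the collapsed matrix is $U$ itself (up to the harmless block swap) rather than its transpose or some other permuted variant; the Gram computation and the Gramian limit are routine.
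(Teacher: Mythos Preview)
Your proof is correct and rests on the same idea as the paper: exhibit $H_{e}$ as a low-rank product and use the $XY/YX$ nonzero-spectrum identity to collapse to a $2m\times 2m$ matrix whose lower-left block is the finite-horizon Gramian sum, then let $N\to\infty$. The paper does this in a single step, writing $H_{e}=v\trans{u}$ with $v,u\in\mathbb{R}^{Nm\times 2m}$ whose block rows already carry $\trans{S}A^{i}B$, so that $\trans{u}v$ is $U$ directly. Your symmetric factorization $H_{e}=ZM\trans{Z}$ instead isolates $S$ in the middle factor $M$, which costs a second application of the $XY/YX$ identity via the split $M=VW$ and a final block-swap similarity, but buys a cleaner bookkeeping of dimensions (the rectangular $S$ never sits inside the thin factor) and makes the Gramian appear transparently as $\Gamma_{N}\trans{\Gamma_{N}}$ rather than as a sum dressed with $S$ on both sides.
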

 \begin{proof}
    $H_{e}$ is the outer product $v \trans{u}$ of $v,u \in \mathbb{R}^{Nm \times 2m}$ defined as 
    \begin{align*}
    v &\coloneqq \trans{
        \begin{bmatrix}
        \trans{S} A^{N} B & \trans{S} A^{N-1} B & \cdots & \trans{S} A B & \trans{S} B\\
        0 & 0 & \cdots & 0 & I
        \end{bmatrix}}, \\
    u &\coloneqq \trans{
        \begin{bmatrix}
        0 & 0 & \cdots & 0 & I \\
        \trans{S} A^{N} B & \trans{S} A^{N-1} B & \cdots & \trans{S} A B & \trans{S} B
        \end{bmatrix}}.
    \end{align*}
    The rank of an outer product matrix can be no larger than the smallest rank of the component matrices, and the rank of both $u$ and $v$ is $\leq 2m$.
    The non-zero eigenvalues of $v \trans{u}$ are the same as the eigenvalues of $\trans{u} v$ \cite[Example~1.3.23]{Horn2013}, with
    \begin{equation*}
    \trans{u} v =
    \begin{bmatrix}
    \trans{B} S & I \\
    \sum_{k=0}^{N} \trans{S} A^{k} B \trans{B} \trans{(A^{k})} S & \trans{S} B
    \end{bmatrix}
    = U.
    \end{equation*}
    As $N \to \infty$, the summation converges to the controllability Gramian of the system $G_{s}$ \cite[\S 6.6]{Chen1999_LinearSystemsBook}, which makes the lower-left corner of the matrix $U$ become $\trans{S} W_{c} S$.
 \end{proof}
    
The results presented in Lemma~\ref{lem:dense:Sterm:HeRank} show that the correction matrix $H_{e}$ has a finite and low rank independent of prediction horizon.
 Additionally, the limit points for the eigenvalues of $H_{e}$ as $N \to \infty$ can be computed by finding the eigenvalues of the $2m \times 2m$ matrix $U$, which is independent of the horizon length.
 Knowledge of the eigenvalues of $H_{e}$ then allows for the eigenvalues of the Hessian matrix $H_{cS}$ to be bounded.
\begin{theorem}
    \label{thm:dense:Sterm:hessEig}
    Let the system $G_{s}$ be Schur-stable with $P_{H_{n}}$ from \eqref{eq:spec:sTerm:symHn} and $U$ from Lemma~\ref{lem:dense:Sterm:HeRank}.
    Let 
    $ \gamma~\coloneqq~\lambda_{max}(P_{H_{n}}) $, 
    $ \beta~\coloneqq~\lambda_{min}(P_{H_{n}}) $,
    $ \eta~\coloneqq~\lambda_{max}( U ) $,
    $ \nu~\coloneqq~\lambda_{min}( U ) $.
    If $S \neq 0$, then the spectrum of the condensed Hessian matrix $H_{cS}$ has the following properties:
    \begin{enumerate}[label={(\alph*)},ref={\thetheorem(\alph*)}]
        \item $ \max\{0, \beta - \eta\} \leq  \lambda(H_{cS}) \leq \gamma - \nu$ 
        \label{thm:dense:Sterm:hessEig:eigBound}
        \item $ \underset{N \to \infty}{\lim} \kappa(H_{cS}) \leq 
        \begin{cases}
        \frac{\gamma - \nu}{\beta - \eta} & \text{if}~\beta > \eta\\
        \infty & \text{otherwise}
        \end{cases}  $
        \label{thm:dense:Sterm:hessEig:condBound}
    \end{enumerate}
\end{theorem}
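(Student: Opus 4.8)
The plan is to treat $H_{cS}=H_n-H_e$ as a Hermitian perturbation of a Toeplitz matrix and apply Weyl's inequalities for the eigenvalues of a sum of Hermitian matrices. Writing $H_{cS}=H_n+(-H_e)$, Weyl's inequalities give, for every eigenvalue $\lambda$ of $H_{cS}$,
\[
\lambda_{min}(H_n)-\lambda_{max}(H_e)\;\le\;\lambda\;\le\;\lambda_{max}(H_n)-\lambda_{min}(H_e),
\]
so the problem splits into controlling the extreme eigenvalues of $H_n$ and of $H_e$ separately, and part~(b) will follow mechanically from part~(a).

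First I would bound the nominal part. Since $H_n$ is Toeplitz with symbol $P_{H_n}\in\Cpi$ from \eqref{eq:spec:sTerm:symHn}, the Toeplitz spectral-containment argument already used for Theorem~\ref{thm:dense:PQ:hessEig:bounds} (via \cite[Theorem 4.4]{Gutierrez-Gutierrez2012_blockSurvey}) gives $\beta\le\lambda(H_n)\le\gamma$ for every horizon length $N$, hence $\lambda_{min}(H_n)\ge\beta$ and $\lambda_{max}(H_n)\le\gamma$. Next I would bound the correction part using Lemma~\ref{lem:dense:Sterm:HeRank}: $H_e$ has rank at most $2m$, so for $N$ large it carries at least one structurally forced zero eigenvalue, while its $2m$ nonzero eigenvalues converge to the eigenvalues of $U$ as $N\to\infty$. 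Because $H_e=\trans{S_c}\Gamma+\trans{\Gamma}S_c$ has the form $X+\trans{X}$ with $\mathrm{rank}(X)\le m$, its positive and negative inertia are balanced, so for $S\neq 0$ it is genuinely indefinite and $\eta=\lambda_{max}(U)>0>\lambda_{min}(U)=\nu$; consequently $\lambda_{max}(H_e)\to\eta$ and $\lambda_{min}(H_e)\to\nu$. Substituting these limits into the Weyl bounds, and intersecting the lower bound with the trivial bound $\lambda(H_{cS})\ge 0$ (valid since the full weight matrix, and hence $H_{cS}$, is positive definite), yields part~(a). For part~(b), $H_{cS}\succ 0$ gives $\kappa(H_{cS})=\lambda_{max}(H_{cS})/\lambda_{min}(H_{cS})$; dividing the two estimates from part~(a) produces $(\gamma-\nu)/(\beta-\eta)$ when $\beta>\eta$, whereas if $\beta\le\eta$ the part~(a) lower bound collapses to $0$ and only the trivial bound remains.

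The main obstacle I expect is the bookkeeping for the extreme eigenvalues of the correction matrix $H_e$: one must separate its $2m$ ``active'' eigenvalues from the $Nm-2m$ forced zeros, observe that the active ones coincide with those of $U$ only in the limit (the finite-horizon version of $U$ carries the truncated Gramian $\sum_{k=0}^{N}A^{k}B\trans{B}\trans{(A^{k})}$ in place of $W_c$), and argue that $H_e$ is indefinite so that $\lambda_{max}(H_e)=\eta$ and $\lambda_{min}(H_e)=\nu$ rather than $0$. This indefiniteness point is precisely what forces the $\max\{0,\cdot\}$ guard in the lower bound and makes the estimates asymptotic in $N$ rather than exact at every horizon.
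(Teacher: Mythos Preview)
Your approach is essentially identical to the paper's: decompose $H_{cS}=H_n-H_e$, apply Weyl's inequalities (the paper cites \cite[Fact~5.12.2]{Bernstein2009} for exactly the same pair of bounds), control $H_n$ via its Toeplitz symbol and $H_e$ via Lemma~\ref{lem:dense:Sterm:HeRank}, then read off part~(b) from part~(a) and the definition of $\kappa$. Your additional bookkeeping on the indefiniteness of $H_e$ and on the finite-$N$ versus limiting-$U$ distinction is in fact more careful than the paper's own proof, which glosses over both points.
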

\begin{proof}
    Note that $H_{cS} = H_{n} - H_{e}$ can be viewed as the addition of the negation of $H_{e}$.
    Negating a matrix will negate all the eigenvalues, and consequently reverse their order.
    Since both $H_{n}$ and $H_{e}$ are Hermitian, the eigenvalues of $H_{cS}$ can be bounded by \cite[Fact 5.12.2]{Bernstein2009}
    \begin{align*}
    \lambda_{min}(H_{n}) - \lambda_{max}(H_{e}) &\leq \lambda_{min}(H_{cS}), \\
    \lambda_{max}(H_{cS}) &\leq \lambda_{max}(H_{n}) - \lambda_{min}(H_{e}),
    \end{align*}
    which gives the inequalities in part~(a).
    No a priori bounds are provided for the value of $\eta$, so it is possible that $\beta - \eta < 0$.
    However it is given that the Hessian matrix is positive definite, so when $\beta - \eta < 0$ the lower bound is set to $0$.
    The condition number in part (b) follows from applying the bounds in part (a) to the definition of the condition number.
\end{proof}
    
The results presented in Theorem~\ref{thm:dense:Sterm:hessEig} provide horizon-independent bounds for the extremal eigenvalues of the primal Hessian matrix with $S$ present.
 The results in Theorem~\ref{thm:dense:PQ:hessEig:bounds} can provide the values for $\gamma$ and $\beta$ when $P=Q$ and for when $P$ is the solution to the discrete-time Lyapunov equation.
 Horizon-independent values for $\eta$ and $\nu$ can be computed using Lemma~\ref{lem:dense:Sterm:HeRank}.
 
The results in Theorem~\ref{thm:dense:Sterm:hessEig} are conservative bounds on the spectrum.
 This can be seen in Figure~\ref{fig:specRes:primalHessSpec:sys2}, where the bound on $\lambda_{min}$ is much lower than the actual computed eigenvalues.
 It is possible for the bound on the condition number to go infinite if $\lambda_{max}(H_{e}) \geq \lambda_{min}(H_n)$, since then the lower bound on the eigenvalue will be $0$ even though the actual Hessian $H_{cS}$ remains positive definite.

An alternative method presented in \cite{Anderson1982_linearOptimalBook} to handle a non-zero $S$ matrix is to transform the problem into one with the system given by $(\tilde{A}, \tilde{B})$ and weight matrices $(\tilde{Q}, \tilde{R}, \tilde{S})$ with
\begin{align*}
    \tilde{A} &\coloneqq A - B R^{-1} \trans{S}, & &\tilde{B} = B, \\
    \tilde{Q} &\coloneqq Q - S R^{-1} \trans{S}, & &\tilde{R} = R, & &\tilde{S}=0.
\end{align*}
Note that the Schur-stability assumption must now hold for $\tilde{A}$, which is not true in general.
This means a system that was Schur-stable before the transformation may lose its stability when transformed, and then must be pre-stabilized before the results from Cases~\ref{case:noPnoS} and~\ref{case:Plyap} can be used.

\subsection{Condensed Constraint Matrix}
\label{sec:specRes:condConstraint}

We now turn our focus to the constraints in~\eqref{eq:mpc:linMPC}.
 When both state and input constraints are included, the condensed constraint matrix~\eqref{eq:mpc:cond:con} is a lower-triangular Toeplitz matrix.
 \begin{lemma}
    \label{lem:specRes:conSymbol}
    For a Schur-stable system with prediction matrix $\Gamma$, the condensed constraint matrix $G$ is Toeplitz with the matrix symbol $P_{G} \in \Cpi$ with
    \begin{equation*}
    \label{eq:dense:con:genFunc}
    P_{G}(z) \coloneqq \begin{bmatrix}
    D P_{\Gamma}(z)\\
    E \\
    \end{bmatrix}
    \qquad
    \forall z \in \mathbb{T},
    \end{equation*}
    where $P_{\Gamma}(z)$ is the matrix symbol of $\Gamma$ given in Lemma~\ref{lem:gammaSymbol}.
 \end{lemma}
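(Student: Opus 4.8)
The plan is to write the condensed constraint matrix $G$ explicitly in terms of the prediction matrix $\Gamma$ and the stage matrices $D$, $E$, and then read off its block-Toeplitz symbol after fixing a suitable ordering of the constraint rows. First I would eliminate the states: iterating \eqref{eq:mpc:lin:dyn} gives $x_k = A^k \hat{x}_0 + \sum_{i=0}^{k-1} A^{k-1-i} B u_i$ for $k = 1,\dots,N$, so substituting into the state constraints \eqref{eq:mpc:lin:conX} shows that the part of $G$ acting on $u$ coming from the state constraints is $(I_N \otimes D)\,\Gamma$; here I use that the $i$-th diagonal block of $\Gamma$ is the constant $A^i B$ (cf.\ \eqref{eq:predMat:specCoeff} and Lemma~\ref{lem:gammaSymbol}). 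The part of $G$ coming from the input constraints \eqref{eq:mpc:lin:conU} is the block-diagonal matrix $I_N \otimes E$.

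The only real subtlety is the ordering of the rows: listing all state-constraint rows and then all input-constraint rows does \emph{not} produce a Toeplitz matrix, but interleaving them stage by stage does. Concretely, for $k = 0,\dots,N-1$ I would stack the $j$ rows enforcing $D x_{k+1} \le c_x$ on top of the $l$ rows enforcing $E u_k \le c_u$; as $k+1$ ranges over $1,\dots,N$ these state rows are exactly the constraints \eqref{eq:mpc:lin:conX}, the index shift being absorbed by pairing $x_{k+1}$ with $u_k$. With this ordering the $(k,i)$ super-block of $G$ is $\left[\begin{smallmatrix} D A^{k-i} B \\ E\,\delta_{k i}\end{smallmatrix}\right]$ when $i \le k$ and is zero when $i > k$, so $G$ is block lower-triangular Toeplitz whose $p$-th diagonal block is $\left[\begin{smallmatrix} D A^{p} B \\ E\,\delta_{p 0}\end{smallmatrix}\right]$.

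Forming the generating symbol then amounts to summing the diagonal blocks against $z^{-p}$ and pulling the constant matrices out of the sums: the lower block contributes only the $p = 0$ term $E$, and the upper block contributes $D \sum_{p=0}^{\infty} A^p B z^{-p}$, which by the computation in the proof of Lemma~\ref{lem:gammaSymbol} converges on $\mathbb{T}$ (by Schur-stability) to $D P_{\Gamma}(z)$; this gives $P_G(z) = \left[\begin{smallmatrix} D P_{\Gamma}(z)\\ E \end{smallmatrix}\right]$. Equivalently, one can invoke \cite[Lemma 4.5]{Gutierrez-Gutierrez2012_blockSurvey} to see that left-multiplying the lower-triangular Toeplitz $\Gamma$ by the constant block-diagonal $I_N \otimes D$ preserves the Toeplitz structure and multiplies the symbol by $D$, that $I_N \otimes E$ is Toeplitz with constant symbol $E$, and that stacking two block-Toeplitz matrices row-wise stacks their symbols.

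Finally, $P_G \in \Cpi$ is inherited from $P_\Gamma$: the spectral coefficients $D A^p B$ are absolutely summable because $\{A^p B\}$ is (Schur-stability), and $E$ contributes a single term, so $P_G$ lies in the Wiener class and is therefore continuous, $2\pi$-periodic and in $\Linf$. I expect the main obstacle to be organisational rather than analytical: choosing the interleaved row ordering so that the constant-diagonal structure is manifest, and keeping the $x_{k+1}\leftrightarrow u_k$ index alignment straight between \eqref{eq:mpc:lin:conX} and \eqref{eq:mpc:lin:conU}.
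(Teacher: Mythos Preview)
Your proposal is correct and follows essentially the same route as the paper: writing $G$ as a block-diagonal matrix times $\Gamma$ plus another block-diagonal matrix, then reading off the symbol. The paper hides the interleaving you discuss at length inside the zero-padded definitions $\bar{D} = I_N \otimes \left[\begin{smallmatrix} D \\ 0 \end{smallmatrix}\right]$ and $\bar{E} = I_N \otimes \left[\begin{smallmatrix} 0 \\ E \end{smallmatrix}\right]$ from Appendix~\ref{app:mpcMatrices}, so that $G = \bar{D}\Gamma + \bar{E}$ is already in interleaved form; your explicit treatment of the row ordering and of the $\Cpi$ membership is more detailed than the paper's but not different in substance.
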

 \begin{proof}
    We start with the fact that
    $
    G = \bar{D} \Gamma + \bar{E}.
    $
    Using the definitions of $\bar{D}$ and $\bar{E}$, it is obvious they are Toeplitz with symbols
    \begin{equation*}
        P_{D}(z) \coloneqq
         \begin{bmatrix}
          D\\
          0_{l \times n} \\
         \end{bmatrix}, \quad
        P_{E}(z) \coloneqq
         \begin{bmatrix}
          0_{j \times m}\\
          E
         \end{bmatrix}
         \quad
         \forall z \in \mathbb{T}
    \end{equation*}
    respectively.
    The product $\bar{D} \Gamma$ is Toeplitz since $\bar{D}$ is diagonal.
    Combining the matrix symbols together leads to $P_{G}$.
 \end{proof}

Since $G$ is a Toeplitz matrix, we can directly relate the singular value distribution of $G$ to the singular values of $P_{G}$ in the same manner as in Section~\ref{sec:specRes:condPred}.
\begin{lemma}
    \label{lem:const:constSpec}
    Let $P_{G}$ be as defined in Lemma~\ref{lem:specRes:conSymbol}. If the system is predicted with a horizon of length $N$, then:
    \begin{enumerate}[label={(\alph*)},ref={\thetheorem(\alph*)}]
        \item $ \sigma_{min}( P_{G} ) \leq  \sigma(G) \leq \sigma_{max}(P_{G})$ %
        \label{thm:const:constSpec:bounds}
        \item $ \underset{N \to \infty}{\lim} \kappa(G) = \kappa( P_{G}) $
        \label{thm:const:constSpec:condLimit}
        \item $ \sigma(G) \approx \bigcup_{\omega \in \Omega} \sigma(P_{G}(e^{j \omega})) $
        
        with 
        $ \Omega \coloneqq \left\{ \omega: \omega=-\frac{\pi}{2} + \frac{2\pi}{N}i, \ i \in \mathbb{Z}_{[0,N-1]}\right\} $
        \label{thm:const:constSpec:specEst}
    \end{enumerate}
\end{lemma}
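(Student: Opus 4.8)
The plan is to reuse, essentially verbatim, the argument of Proposition~\ref{prop:dense:predSVD}, since Lemma~\ref{lem:specRes:conSymbol} has already established that $G$ is a (block) lower-triangular Toeplitz matrix with symbol $P_{G}\in\Cpi$, so it plays exactly the role that $\Gamma$ played there. The first step is to form the Hermitian Gram matrix $\ctrans{G}G$: because $G$ is lower-triangular, $\ctrans{G}$ is upper-triangular, and by \cite[Lemma 4.5]{Gutierrez-Gutierrez2012_blockSurvey} the product $\ctrans{G}G$ is again Toeplitz, with symbol $\ctrans{P_{G}}P_{G}=\ctrans{P_{\Gamma}}\ctrans{D}D\,P_{\Gamma}+\ctrans{E}E$ (using $\ctrans{z}z=1$ on $\mathbb{T}$). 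Since $P_{\Gamma}\in\Cpi$, this symbol is also in $\Cpi$, so the standard Toeplitz spectral machinery applies to $\ctrans{G}G$.

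For part~(a) I would apply \cite[Theorem 4.4]{Gutierrez-Gutierrez2012_blockSurvey} to $\ctrans{G}G$: every finite truncation of a Toeplitz matrix whose symbol lies in $\Cpi$ has its spectrum contained in the range of the symbol's spectrum. Combining this with the identities $\sigma(G)^{2}=\lambda(\ctrans{G}G)$ and $\sigma(P_{G}(z))^{2}=\lambda(\ctrans{P_{G}(z)}P_{G}(z))$ and then taking square roots transfers the eigenvalue bounds into $\sigma_{min}(P_{G})\le\sigma(G)\le\sigma_{max}(P_{G})$, with the lower bound handled exactly as the lower-bound case in the proof of Proposition~\ref{prop:dense:predSVD}. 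Part~(b) is then immediate: the extreme singular values of the truncations of $G$ converge to those of the symbol $P_{G}$, so $\lim_{N\to\infty}\kappa(G)=\sigma_{max}(P_{G})/\sigma_{min}(P_{G})=\kappa(P_{G})$. For part~(c) I would invoke the spectral-distribution result of \cite{Miranda2000} for $\ctrans{G}G$, giving $\lambda(\ctrans{G}G)\approx\bigcup_{\omega\in\Omega}\lambda(\ctrans{P_{G}(e^{j\omega})}P_{G}(e^{j\omega}))$, rewrite the right-hand side (a union of matrix spectra) as $\bigcup_{\omega\in\Omega}\sigma(P_{G}(e^{j\omega}))^{2}$, and take square roots, exactly as done for $\Gamma$.

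The one genuinely new feature relative to Proposition~\ref{prop:dense:predSVD} is that $G$ is honestly rectangular — it has $N(j+l)$ rows and $Nm$ columns, and its symbol is the stacked matrix with $D\,P_{\Gamma}(z)$ above the constant block $E$ — so one cannot speak of the eigenvalues of $G$ directly. Passing to $\ctrans{G}G$ is precisely what sidesteps this, and it is the device already used for $\Gamma$; the step I expect to warrant the most explicit checking is that $\ctrans{G}G$ really is Toeplitz, i.e.\ that the triangular-product lemma applies given that $G$ is assembled as $\bar{D}\Gamma+\bar{E}$ with $\bar{D}$ and $\bar{E}$ block-diagonal (so $\bar{D}\Gamma$ is block lower-triangular Toeplitz and $\bar{E}$ is block-diagonal Toeplitz, hence their sum is block lower-triangular Toeplitz). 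A minor caveat, worth a remark rather than a proof: if the stage-constraint matrices make $P_{G}(z)$ rank-deficient for some $z$ then $\sigma_{min}(P_{G})=0$ and the limit in part~(b) is read as $\kappa(G)\to\infty$; this affects neither the inequalities in~(a) nor the estimate in~(c).
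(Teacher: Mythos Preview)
Your proposal is correct and follows essentially the same approach as the paper: the paper's own proof is the single sentence ``This proof is similar to Proposition~\ref{prop:dense:predSVD}'s proof,'' and your plan is precisely to rerun that argument with $G$ in place of $\Gamma$, forming $\ctrans{G}G$, invoking the lower-triangular product lemma, and then applying the same Toeplitz spectral bounds and distribution results. Your added remarks on the rectangularity of $G$ and the rank-deficient case are helpful elaborations but do not change the route.
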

\begin{proof}
    This proof is similar to Proposition~\ref{prop:dense:predSVD}'s proof.
\end{proof}

%

\subsection{Dual Hessian Matrix}
\label{sec:specRes:dualHess}

In this section, we derive the spectral properties of the dual Hessian matrix $H_d$ for two distinct cases:
 \begin{enumerate}[ref=\arabic*, leftmargin=*] 
     \item $H_{c}$ is arbitrary (e.g.\ Case~\ref{case:withS} from Section~\ref{sec:specRes:condHess}) \label{case:dual:noInput}
     \item $H_{c}$ is Toeplitz (e.g.\ Cases~\ref{case:noPnoS} and~\ref{case:Plyap} from Section~\ref{sec:specRes:condHess}) \label{case:dual:bothSI}
 \end{enumerate}

\subsubsection{$H_{c}$ is arbitrary}
\label{sec:specRes:dualHess:toepHess}

For problems where $H_{c}$ is not Toeplitz, the resulting dual Hessian matrix $H_{d}$ will also be non-Toeplitz.
 This means that a relationship between a matrix symbol and the spectrum of $H_d$ cannot be derived.
 We can however still place an upper bound on the spectrum of $H_d$.
 \begin{proposition}
    \label{prop:dense:dual:arbMatrixUB}
    If $P_{G}$ is defined as in Lemma~\ref{lem:specRes:conSymbol} and $H_{c}$ is the primal Hessian matrix, then
    \begin{equation*}
        \Twonorm{H_d} \leq \frac{\left( \sigma_{max}(P_{G}) \right)^2}{\lambda_{min}(H_c)}.
    \end{equation*}
 \end{proposition}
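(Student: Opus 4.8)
The plan is to exploit the congruence structure $H_d = G H_c^{-1} \trans{G}$ together with the positive-definiteness of $H_c$. First I would observe that since $H_c = \trans{H_c} \succ 0$, its inverse satisfies the Loewner bound $H_c^{-1} \preceq \lambda_{min}(H_c)^{-1} I$ (equivalently, $\Twonorm{H_c^{-1}} = \lambda_{min}(H_c)^{-1}$). Conjugating this inequality by $G$ on the left and $\trans{G}$ on the right preserves the ordering, giving
\begin{equation*}
    H_d = G H_c^{-1} \trans{G} \preceq \frac{1}{\lambda_{min}(H_c)} G \trans{G}.
\end{equation*}

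Next I would note that $H_d$ is symmetric positive semi-definite (it is a congruence transform of the positive-definite $H_c^{-1}$), so $\Twonorm{H_d} = \lambda_{max}(H_d)$, and likewise $G \trans{G} \succeq 0$. Taking the largest eigenvalue of both sides of the displayed Loewner inequality — which is monotone with respect to $\preceq$ — yields
\begin{equation*}
    \Twonorm{H_d} = \lambda_{max}(H_d) \leq \frac{1}{\lambda_{min}(H_c)} \lambda_{max}(G \trans{G}) = \frac{\left(\sigma_{max}(G)\right)^2}{\lambda_{min}(H_c)},
\end{equation*}
using $\lambda_{max}(G \trans{G}) = \sigma_1(G)^2 = (\sigma_{max}(G))^2$.

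Finally I would invoke Lemma~\ref{lem:const:constSpec}\ref{thm:const:constSpec:bounds}, which states that every singular value of the Toeplitz constraint matrix $G$ is bounded above by $\sigma_{max}(P_{G})$; in particular $\sigma_{max}(G) \leq \sigma_{max}(P_{G})$. Substituting this into the previous bound gives $\Twonorm{H_d} \leq (\sigma_{max}(P_{G}))^2 / \lambda_{min}(H_c)$, as claimed. There is essentially no hard step here — the only subtlety worth a sentence is that $G$ need not have full row rank, so $H_d$ may be singular, but this only makes the (upper) bound easier to satisfy; the argument above never requires invertibility of $H_d$ or $G$. The result is horizon-independent because both $\sigma_{max}(P_{G})$ and, via the earlier theorems, $\lambda_{min}(H_c)$ are horizon-independent quantities computable from the system and weight data.
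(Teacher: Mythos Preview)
Your argument is correct. The only difference from the paper's proof is in the first step: the paper reaches the intermediate bound $\Twonorm{H_d} \leq \sigma_{max}(G)^2/\lambda_{min}(H_c)$ directly via the submultiplicativity of the spectral norm, $\Twonorm{G H_c^{-1}\trans{G}} \leq \Twonorm{G}\,\Twonorm{H_c^{-1}}\,\Twonorm{\trans{G}}$, whereas you obtain the same inequality through the Loewner ordering $H_c^{-1}\preceq \lambda_{min}(H_c)^{-1}I$ followed by congruence and monotonicity of $\lambda_{max}$. Both routes then finish identically by invoking Lemma~\ref{lem:const:constSpec} to replace $\sigma_{max}(G)$ with $\sigma_{max}(P_G)$. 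Your version makes explicit use of the positive-definite structure of $H_c$ (and hence the positive semi-definiteness of $H_d$), which is arguably cleaner here; the paper's submultiplicativity argument is more agnostic and would apply verbatim even without symmetry. Either way the bound and its horizon-independence are the same.
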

 \begin{proof}
    Combining the triangle inequality
    \begin{equation*}
        \Twonorm{H_d} \leq \Twonorm{G} \Twonorm{H_{c}^{-1}} \Twonorm{G}
    \end{equation*}
    and the fact that $\Twonorm{H_{c}^{-1}} = \frac{1}{\lambda_{min}(H_c)}$ together with Lemma~\ref{lem:const:constSpec} gives the result.
 \end{proof}

The result in Proposition~\ref{prop:dense:dual:arbMatrixUB} creates an upper bound for the spectrum of $H_d$ using the maximum singular value of the constraint matrix $G$ and the minimum eigenvalue of the primal Hessian matrix $H_c$.
 This result holds true for any Hessian matrix $H_{c}$, but is most applicable for Case~\ref{case:withS} in Section~\ref{sec:specRes:condHess} since the introduction of the $S$ term disrupts the Toeplitz structure.

A non-zero lower bound for the spectrum of $H_d$ does not in general exist, since $H_d$ can be rank-deficient depending on the constraint set $G$.
 \begin{proposition}
    \label{prop:spec:dual:arbMatixRank}
    Let $G \in \mathbb{R}^{Nl \times Nm}$ be the condensed constraint matrix for a horizon of length $N$, then
    $\text{Rank}(H_{d}) = \text{Rank}(G)$.
 \end{proposition}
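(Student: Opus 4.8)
The plan is to show that the matrix $H_d = G H_c^{-1} \trans{G}$ inherits its rank from $G$ alone, exploiting the fact that $H_c$ (and hence $H_c^{-1}$) is positive definite. The key observation is that conjugating by an invertible matrix preserves rank, and that a positive definite matrix can be factored as $H_c^{-1} = M \trans{M}$ for some invertible $M$ (e.g.\ the symmetric square root $M = H_c^{-\sfrac{1}{2}}$, which exists and is invertible because $H_c \succ 0$). Then $H_d = (GM)(\trans{(GM)}) = K\trans{K}$ with $K \coloneqq GM$, and the rank of a Gram matrix $K\trans{K}$ equals the rank of $K$.

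First I would invoke positive definiteness of $H_c$ — guaranteed by the standing assumption that $\begin{bmatrix} Q & S\\ \trans{S} & R \end{bmatrix} \succ 0$ — to write $H_c^{-1} = \sqrtm{(H_c^{-1})} \sqrtm{(H_c^{-1})}$, where the square root is symmetric positive definite and therefore invertible. Second, I would set $K \coloneqq G \sqrtm{(H_c^{-1})}$ and note $H_d = K\trans{K}$. Third, I would use the standard fact that $\text{Rank}(K\trans{K}) = \text{Rank}(K)$ — this holds because $K\trans{K}x = 0 \iff \trans{x}K\trans{K}x = 0 \iff \Twonorm{\trans{K}x}^2 = 0 \iff \trans{K}x = 0$, so $K\trans{K}$ and $\trans{K}$ share a null space, hence the same rank, and $\text{Rank}(\trans{K}) = \text{Rank}(K)$. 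Finally, since $\sqrtm{(H_c^{-1})}$ is invertible (square), right-multiplication by it preserves rank, so $\text{Rank}(K) = \text{Rank}(G)$, and chaining the equalities yields $\text{Rank}(H_d) = \text{Rank}(G)$.

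There is essentially no hard part here — this is a short linear-algebra argument. The only point requiring a moment's care is ensuring the factorization of $H_c^{-1}$ uses an \emph{invertible} factor, which is where positive-definiteness (as opposed to mere positive-semidefiniteness) of $H_c$ is essential; with only semidefiniteness the conclusion would fail. One could alternatively argue directly via $\text{Rank}(GH_c^{-1}\trans{G}) \le \text{Rank}(G)$ from submultiplicativity of rank and $\text{Rank}(GH_c^{-1}\trans{G}) \ge \text{Rank}(G)$ by noting that $\trans{x}GH_c^{-1}\trans{G}x = 0$ forces $\trans{G}x = 0$ (since $H_c^{-1}\succ 0$), so $\ker(H_d) \subseteq \ker(\trans{G})$, giving the reverse inequality; but the Gram-matrix factorization is cleaner.
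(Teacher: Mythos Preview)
Your proof is correct and takes a genuinely different route from the paper. The paper argues via the Sylvester rank inequality
\[
\text{Rank}(A) + \text{Rank}(B) - y \leq \text{Rank}(AB) \leq \min\{\text{Rank}(A), \text{Rank}(B)\}
\]
applied first to $M \coloneqq G H_c^{-1}$ (where both sides collapse to $\text{Rank}(G)$ because $H_c^{-1}$ has full rank $Nm$) and then claims ``a similar process'' for $M\trans{G}$. Your approach instead factors $H_c^{-1} = \sqrtm{(H_c^{-1})}\sqrtm{(H_c^{-1})}$, writes $H_d = K\trans{K}$ with $K = G\sqrtm{(H_c^{-1})}$, and invokes the Gram-matrix identity $\text{Rank}(K\trans{K}) = \text{Rank}(K)$ together with invariance of rank under right-multiplication by an invertible matrix.

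Your route is not only more elementary but actually more airtight: in the paper's second step, Sylvester applied to $M\trans{G}$ (inner dimension $Nm$, both factors of rank $\text{Rank}(G)$) gives only $2\,\text{Rank}(G) - Nm \leq \text{Rank}(H_d) \leq \text{Rank}(G)$, and these bounds do not coincide when $\text{Rank}(G) < Nm$ --- precisely the rank-deficient case the proposition is meant to cover. The Gram-matrix argument (or the null-space argument you sketch as an alternative) sidesteps this gap entirely by exploiting the positive-definiteness of $H_c^{-1}$ rather than just its invertibility, which is exactly what is needed.
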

 \begin{proof}
    Let $H_c$ be the positive definite primal Hessian matrix as defined in Section~\ref{sec:specRes:condHess}.
    Recall that $H_{d} = G H_{c}^{-1} \trans{G}$.
    It is known from \cite[Corollary~2.5.10]{Bernstein2009} that for matrix multiplication of two matrices $A \in \mathbb{R}^{x \times y}$ and $B \in \mathbb{R}^{y \times z}$ with ranks $a$ and $b$ respectively, the rank of $AB$ is
    \begin{equation}
        \label{eq:spec:dual:proof:rank}
        a + b - y \leq \text{Rank}(AB) \leq \min\{ a, b\}.
    \end{equation}
    
    Begin by examining the product $M \coloneqq GH_{c}^{-1}$, which leads to $M \in \mathbb{R}^{ N(j+l) \times Nm}$
    Since $H_{c}$ is positive definite, its inverse exists and is also full rank, meaning $\text{Rank}(H_{c}^{-1})=Nm$.
    Additionally, $\text{Rank}(G) \leq Nm$ since one dimension of $G$ is fixed at $Nm$.
    This means that both sides of~\eqref{eq:spec:dual:proof:rank} become $\text{Rank}(G)$, making $\text{Rank}(M) = \text{Rank}(G)$.
    A similar process can be followed for the product $M\trans{G}$, to get the final result.
 \end{proof}

Proposition~\ref{prop:spec:dual:arbMatixRank} shows that the rank of $H_d$ is determined by the constraint set.
 If there are more constraints than inputs (e.g.\ $l + j > m$), then the dual Hessian will be rank deficient, and therefore be positive semi-definite.

\subsubsection{$H_{c}$ is Toeplitz}
\label{sec:specRes:dualHess:toep}

If the MPC problem~\eqref{eq:mpc:linMPC} has a Toeplitz Hessian matrix (e.g.\ Case~\ref{case:noPnoS} or~\ref{case:Plyap} in Section~\ref{sec:specRes:condHess} or the transformed problem to remove $S$), then the eigenvalue distribution of $H_{d}$ can be estimated from the eigenvalues of its matrix symbol.
 To do this, we first note that the dual Hessian has the same non-zero eigenvalues as the matrix $H_{d1} \coloneqq H_{c}^{-1} \trans{G} G$.
\begin{lemma}
    \label{lem:dual:toep:eigEquivalence}
    Let $H_{c}$ and $G$ be from \eqref{eq:mpc:condMPC}.
    The non-zero eigenvalues of the Hessian $H_d$ in \eqref{eq:mpc:dual} are the same as the eigenvalues of $H_{d1} \coloneqq H_{c}^{-1} \trans{G} G$.
\end{lemma}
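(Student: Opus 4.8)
The plan is to recall that $H_d = G H_c^{-1} G'$ and observe that this is of the form $MN$ with $M := G H_c^{-1}$ and $N := G'$ (or, more symmetrically, $M := G$ and $N := H_c^{-1} G'$), while $H_{d1} = H_c^{-1} G' G = N M$ with the second grouping. The key fact is the standard result that for any two conformable matrices $X \in \mathbb{R}^{p \times q}$ and $Y \in \mathbb{R}^{q \times p}$, the products $XY$ and $YX$ have the same non-zero eigenvalues, with the same algebraic multiplicities; this is exactly the tool already used in the excerpt (e.g.\ in the proof of Lemma~\ref{lem:dense:Sterm:HeRank}, citing \cite[Example~1.3.23]{Horn2013}). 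So the whole argument reduces to writing $H_d$ and $H_{d1}$ as $XY$ and $YX$ for a single well-chosen pair $(X,Y)$.

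Concretely, I would set $X := G H_c^{-1}$ and $Y := G'$ (note $H_c$ is symmetric, so $(H_c^{-1})' = H_c^{-1}$ and there is no conjugation subtlety). Then $XY = G H_c^{-1} G' = H_d$ and $YX = G' G H_c^{-1}$. This last matrix is $G' G H_c^{-1}$, which is a similarity transform of $H_{d1} = H_c^{-1} G' G$ via $H_{d1} = H_c^{-1}(G' G H_c^{-1}) H_c$, hence $H_{d1}$ and $YX$ have exactly the same eigenvalues (not just the non-zero ones). Chaining the two facts: $H_d = XY$ and $YX$ share non-zero eigenvalues, and $YX$ and $H_{d1}$ share all eigenvalues, so $H_d$ and $H_{d1}$ share their non-zero eigenvalues. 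Alternatively, and slightly more directly, take $X := H_c^{-1} G'$ and $Y := G$: then $YX = G H_c^{-1} G' = H_d$ and $XY = H_c^{-1} G' G = H_{d1}$ on the nose, and a single application of the $XY$/$YX$ spectrum lemma finishes it with no similarity step at all. I would use this second grouping since it is cleanest.

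There is essentially no obstacle here — the only things to be careful about are: (i) making sure the dimensions are conformable so the cited matrix identity applies ($G \in \mathbb{R}^{Nl \times Nm}$ or $\mathbb{R}^{N(j+l)\times Nm}$ depending on which constraints are present, $H_c^{-1} G' \in \mathbb{R}^{Nm \times (\cdot)}$, so both $GH_c^{-1}G'$ and $H_c^{-1}G'G$ are well-defined square matrices); (ii) noting $H_c^{-1}$ exists because $H_c$ is positive definite by assumption; and (iii) phrasing the conclusion as ``non-zero eigenvalues'' rather than ``eigenvalues,'' since $H_d$ and $H_{d1}$ generally have different sizes and hence differ in their number of zero eigenvalues (this is consistent with Proposition~\ref{prop:spec:dual:arbMatixRank}, which already records that $H_d$ can be rank-deficient). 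So the proof is two lines: exhibit the pair $(X,Y) = (H_c^{-1}G', G)$, then invoke \cite[Example~1.3.23]{Horn2013}.
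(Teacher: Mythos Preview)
Your proposal is correct and takes essentially the same approach as the paper: both write $H_d$ and $H_{d1}$ as $YX$ and $XY$ for the pair $(X,Y)=(H_c^{-1}\trans{G},\,G)$ (the paper phrases it as an outer product $u\trans{v}$ with $u=G$, $v=H_c^{-1}G$) and invoke \cite[Example~1.3.23]{Horn2013}. Your second grouping is in fact the cleanest route, exactly matching the paper's one-line argument.
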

\begin{proof}
    This result follows directly from the fact that $H_{d}$ can be viewed as the outer product $u \trans{v}$ of the matrices $u \coloneqq G$ and $v \coloneqq H_{c}^{-1} G$.
    The non-zero eigenvalues of the outer product matrix $u \trans{v}$ are the same as the eigenvalues of $\trans{v} u$ \cite[Example~1.3.23]{Horn2013}
\end{proof}

If $H_{c}$ is limited to being a Toeplitz matrix, then the spectrum of $H_{d}$ can be bounded using a matrix symbol similar to the previous results for the primal Hessian.
\begin{theorem}
    \label{thm:dual:toep:eigBounds}
    Let $H_{c}$ in \eqref{eq:mpc:condMPC} be Toeplitz with the matrix symbol $P_{H_{c}} \in \Linf$, positive definite almost everywhere, then
    \begin{equation*}
        \lambda_{max}(H_{d}) \leq \lambda_{max}(P_{H_{d1}}) \leq \Hinfnorm{P_{H_{d1}}}
    \end{equation*}
    where
$
        P_{H_{d1}}(z) \coloneqq ( P_{H_{cQ}}(z))^{-1} \ctrans{P_{G}(z)} P_{G}(z)
$
with
$z \in \mathbb{T}. $
\end{theorem}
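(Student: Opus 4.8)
The plan is to reduce the claimed eigenvalue bound to a Loewner‑order (positive‑semidefiniteness) statement about a Toeplitz matrix whose symbol can be controlled pointwise on $\mathbb{T}$. First I would apply Lemma~\ref{lem:dual:toep:eigEquivalence} to pass from $H_d$ to $H_{d1}\coloneqq H_c^{-1}\trans{G}G$: since $H_d=G H_c^{-1}\trans{G}\succeq 0$, its largest eigenvalue is either zero (when $G=0$) or a non‑zero eigenvalue, and the non‑zero eigenvalues of $H_d$ and $H_{d1}$ coincide, so $\lambda_{max}(H_d)=\lambda_{max}(H_{d1})$. Conjugating by $H_c^{1/2}$ (recall $H_c$ is positive definite) shows $H_{d1}$ is similar to the Hermitian positive‑semidefinite matrix $H_c^{-1/2}\trans{G}G H_c^{-1/2}$, so its eigenvalues are real and non‑negative and, via the generalized Rayleigh quotient, $\lambda_{max}(H_{d1})=\max_{x\ne 0}\dfrac{\trans{x}\trans{G}Gx}{\trans{x}H_c x}$. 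Consequently, for a scalar $\mu\ge 0$, $\lambda_{max}(H_{d1})\le\mu$ is equivalent to the matrix inequality $\trans{G}G\preceq \mu H_c$.

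Next I would set $\mu\coloneqq\lambda_{max}(P_{H_{d1}})=\sup_{z\in\mathbb{T}}\lambda_n(P_{H_{d1}}(z))$ and argue at the symbol level. For each $z\in\mathbb{T}$, $P_{H_{d1}}(z)=P_{H_{c}}(z)^{-1}\ctrans{P_G(z)}P_G(z)$ is similar to the Hermitian positive‑semidefinite matrix $P_{H_{c}}(z)^{-1/2}\ctrans{P_G(z)}P_G(z)P_{H_{c}}(z)^{-1/2}$ (using that $P_{H_{c}}(z)\succ 0$ almost everywhere), hence $\lambda_n(P_{H_{d1}}(z))$ is just its largest eigenvalue, and $\lambda_n(P_{H_{d1}}(z))\le\mu$ is equivalent to $\ctrans{P_G(z)}P_G(z)\preceq\mu\,P_{H_{c}}(z)$. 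Therefore the Hermitian‑valued function $f(z)\coloneqq\mu\,P_{H_{c}}(z)-\ctrans{P_G(z)}P_G(z)$ satisfies $f(z)\succeq 0$ for almost every $z\in\mathbb{T}$.

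Now the Toeplitz machinery closes the argument. The matrix $\mu H_c-\trans{G}G$ is Toeplitz with symbol $f$: $H_c$ is Toeplitz with symbol $P_{H_{c}}$ by hypothesis, $\trans{G}G$ is Toeplitz with symbol $\ctrans{P_G}P_G$ because $G$ is lower‑triangular Toeplitz (Lemma~\ref{lem:specRes:conSymbol}), so the product $\ctrans{G}G$ is Toeplitz by \cite[Lemma 4.5]{Gutierrez-Gutierrez2012_blockSurvey} with the $z$‑factors cancelling since $\ctrans{z}z=1$ on $\mathbb{T}$, and Toeplitz structure and symbols are preserved under subtraction and non‑negative scaling. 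Applying the spectral‑containment theorem for truncated Toeplitz matrices with essentially bounded Hermitian symbol \cite[Theorem 4.4]{Gutierrez-Gutierrez2012_blockSurvey}, every truncation has spectrum inside the interval determined by the extreme eigenvalues of its symbol, so $\lambda_{min}(\mu H_c-\trans{G}G)\ge\inf_{z\in\mathbb{T}}\lambda_{min}(f(z))\ge 0$; thus $\mu H_c-\trans{G}G\succeq 0$, i.e.\ $\trans{G}G\preceq\mu H_c$, which by the Rayleigh‑quotient characterization gives $\lambda_{max}(H_d)=\lambda_{max}(H_{d1})\le\mu=\lambda_{max}(P_{H_{d1}})$, the first inequality.

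For the second inequality I would note that $\lambda_n(P_{H_{d1}}(z))$, being the largest of the real, non‑negative eigenvalues of $P_{H_{d1}}(z)$, equals its spectral radius, which is bounded above by the spectral norm $\sigma_1(P_{H_{d1}}(z))$; taking the supremum over $z\in\mathbb{T}$ yields $\lambda_{max}(P_{H_{d1}})\le\sigma_{max}(P_{H_{d1}})=\Hinfnorm{P_{H_{d1}}}$. The main obstacle I anticipate is the careful bookkeeping that makes the pointwise symbol inequality $f(z)\succeq 0$ transfer to positive semidefiniteness of $\mu H_c-\trans{G}G$ for every finite horizon $N$ through the containment theorem, together with the mild technical point that $P_{H_{c}}$ being positive definite (and bounded below, e.g.\ by the positive‑definite block $R$) on $\mathbb{T}$ is what guarantees $P_{H_{d1}}\in\Linf$, so that $\Hinfnorm{P_{H_{d1}}}$ is well defined.
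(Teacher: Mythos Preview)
Your argument is correct. The route, however, differs from the paper's. The paper's proof is a one-line appeal to \cite[Theorem~4.3]{Miranda2000}, which states directly that for Toeplitz matrices $P_n,F_n$ with symbols $p,f\in\Linf$ (with $p$ positive definite a.e.) the eigenvalues of $P_n^{-1}F_n$ lie in $[\lambda_{min}(p^{-1}f),\lambda_{max}(p^{-1}f)]$; applying this with $P_n=H_c$ and $F_n=\trans{G}G$ and then invoking Lemma~\ref{lem:dual:toep:eigEquivalence} and $\lambda_{max}\le\sigma_{max}$ finishes the proof. You instead re-derive that containment from more basic ingredients: the generalized Rayleigh quotient turns the bound $\lambda_{max}(H_{d1})\le\mu$ into the Loewner inequality $\trans{G}G\preceq\mu H_c$, which you then establish by showing $\mu H_c-\trans{G}G$ is a Hermitian Toeplitz matrix whose symbol is pointwise positive semidefinite and applying the standard spectral-containment theorem \cite[Theorem~4.4]{Gutierrez-Gutierrez2012_blockSurvey}. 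Your approach is more self-contained and transparent---it needs only the Hermitian version of the containment theorem already used in Theorem~\ref{thm:dense:PQ:hessEig}, not its extension to products $P_n^{-1}F_n$---at the cost of being longer; the paper's proof is terser but pushes the work into the cited reference.
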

\begin{proof}
    The product $\trans{G} G$ is Toeplitz since $G$ is lower triangular.
    Results in \cite[Theorem~4.3]{Miranda2000} state that if $p, f \in \Linf$ are the matrix symbols for the Toeplitz matrices $P_{n}, F_{n}$ respectively, then the eigenvalues of $P_{n}^{-1} F_{n}$ lie inside $[\lambda_{min}(p^{-1}f), \lambda_{max}(p^{-1}f)]$.
    This result, combined with Lemma~\ref{lem:dual:toep:eigEquivalence} and $\lambda_{max} \leq \sigma_{max}$, gives the upper bound.
\end{proof}

\begin{figure}[tb]
    \centering
    
    \pgfplotsset{I_inst/.style={only marks, blue, every mark/.append style={solid, fill=white}, mark = *}}
    \pgfplotsset{I_limit_eig/.style={dashed, thick, blue}}
    \pgfplotsset{I_limit_svd/.style={densely dotted, thick, blue}}
    \pgfplotsset{IS_inst/.style={only marks, red, every mark/.append style={solid, fill=white}, mark = triangle}}
    \pgfplotsset{IS_limit_eig/.style={dashed, thick, red}}
    \pgfplotsset{IS_limit_svd/.style={densely dotted, thick, red}}
    \pgfplotsset{S_inst/.style={only marks, cyan, every mark/.append style={solid, fill=white}, mark = square}}
    \pgfplotsset{S_limit_eig/.style={dashed, thick, cyan}}
    \pgfplotsset{S_limit_svd/.style={densely dotted, thick, cyan}}
    
    \begin{tikzpicture}
        \pgfplotstableread[col sep=comma]{figures/data/spectrum_Dual_JonesMorari_Inst.csv}{\JMinstData}
        \pgfplotstableread[col sep=comma]{figures/data/spectrum_Dual_JonesMorari_Asymp.csv}{\JMasympData}
        \pgfplotstableread[col sep=comma]{figures/data/spectrum_Dual_MassSpring_Inst.csv}{\MSDinstData}
        \pgfplotstableread[col sep=comma]{figures/data/spectrum_Dual_MassSpring_Asymp.csv}{\MSDasympData}
           
        \begin{groupplot}[group style = {group name=JMplots, group size = 1 by 2, vertical sep=0.3cm}]

        \nextgroupplot[xmin   = 0,
                       xmax   = 30,
                       ymin   = 0.19,
                       ymax   = 0.196,
                       grid   = major,
                       ylabel = {$\lambda_{max}$ },
                       y tick label style={/pgf/number format/fixed,
                                           /pgf/number format/fixed zerofill,
                                           /pgf/number format/precision=3},
                       xticklabels=\empty,
                       height = 4cm,
                       width  = 8.5cm,
                       legend cell align=left,
                       legend columns=3,
                       legend to name=dualplotleg]
    
            \addplot[I_inst] table [x=Horizon, y=maxE_I] {\JMinstData};
            \addlegendentry{Input only$\quad$}
            
            \addplot[I_limit_eig, forget plot] table [x=Horizon, y=maxE_I_eig] {\JMasympData};

            \addplot[I_limit_svd, forget plot] table [x=Horizon, y=maxE_I_svd] {\JMasympData};
            
            \addlegendimage{S_inst}
            \addlegendentry{State only$\quad$}
            
            \addplot[IS_inst] table [x=Horizon, y=maxE_IS] {\JMinstData};
            \addlegendentry{Input and State}
            
            \addplot[IS_limit_eig, forget plot] table [x=Horizon, y=maxE_IS_eig] {\JMasympData};
            
            \addplot[IS_limit_svd, forget plot] table [x=Horizon, y=maxE_IS_svd] {\JMasympData};
            
            \addlegendimage{dashed, thick}
            \addlegendentry{$\lambda_{max}(P_{H_{d}})\quad$}

            \addlegendimage{densely dotted, thick}
            \addlegendentry{$\Hinfnorm{P_{H_{d}}}\quad$}
            
        \nextgroupplot[xmin   = 0,
                       xmax   = 30,
                       ymin   = 0.04,
                       ymax   = 0.09,
                       grid   = major,
                       ylabel = {$\lambda_{max}$ },
                       y tick label style={/pgf/number format/fixed,
                                           /pgf/number format/fixed zerofill,
                                           /pgf/number format/precision=3},
                       scaled y ticks = false,
                       xlabel = {$N$},
                       height = 3.5cm,
                       width  = 8.5cm]
        
            \addplot[S_inst] table [x=Horizon, y=maxE_S] {\JMinstData};
            
            \addplot[S_limit_eig] table [x=Horizon, y=maxE_S_eig] {\JMasympData};
            
            \addplot[S_limit_svd] table [x=Horizon, y=maxE_S_svd] {\JMasympData};
    \end{groupplot}
    
    \node at ($(JMplots c1r1.north west)!0.5!(JMplots c1r1.north east)+(-0.5cm,0.8cm)$) {\pgfplotslegendfromname{dualplotleg}};

    \end{tikzpicture}

    \caption{Maximum eigenvalue for the condensed dual Hessian matrix of System~\ref{sys:jonesMorari}. The lines represent the bounds computed using the results in Section~\ref{sec:specRes:dualHess}, and the markers represent the values of the dual matrix at that horizon.}
    \label{fig:specRes:dualMaxEig}
\end{figure}
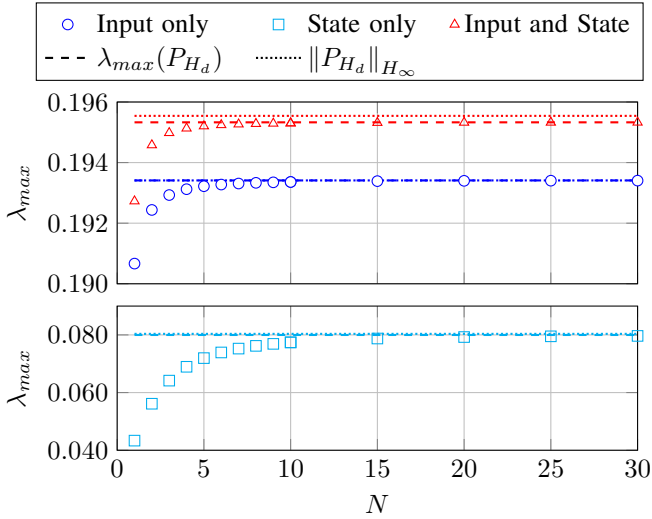

Using the results in Theorem~\ref{thm:dual:toep:eigBounds}, the spectrum of $H_d$ can be bounded using the matrix symbol of $H_{c}$ provided that $H_{c}$ is Toeplitz.
 Unfortunately, the computation of $\lambda_{max}$ for the symbol $P_{H_{d1}}$ requires an exhaustive search over the unit circle to find the largest eigenvalue since $H_{d1}$ is not symmetric.
 Instead, an upper bound on $\lambda_{max}$ can be found through the $H_{\infty}$ norm of $P_{H_{d1}}$, which is a faster operation.
 Figure~\ref{fig:specRes:dualMaxEig} shows $\Hinfnorm{P_{H_{d1}}}$ and the asymptotic properties of $\lambda_{max}$ for System~\ref{sys:jonesMorari} with either only input constraints, only state constraints, or both input and state constraints.
 Note that the bound $\lambda_{max}(H_{d}) \leq \lambda_{max}(P_{H_{d1}})$ in Theorem~\ref{thm:dual:toep:eigBounds} is tight, so there is a horizon above which equality occurs.
 Theorem~\ref{thm:dual:toep:eigBounds} also provides better bounds than the 2-norm estimate from \cite{Patrinos2013_DGP}, which estimates that $\lambda_{max}(P_{H_{d}})$ is less than $0.41$, $1.65$, and $2.06$ for input, state and both input and state constraints, respectively.

\section{The Effect of Weight Matrix Scaling}
\label{sec:weightScaling}

In this section, we examine how the scaling of the weighting matrices affects the extremal eigenvalues and condition number of the Hessian matrices from Section~\ref{sec:specRes}.
 These results provide an analytical grounding for the analysis conducted on the computational complexity in Section~\ref{sec:compComplexity}.

\subsection{Preliminaries}

The bounds on the eigenvalues and condition number we will develop in this section utilize the matrix trace normalized against the matrix size.
 To allow for horizon-independent bounds, we show that these trace normalizations can be computed in a horizon-independent manner through Lemma~\ref{lem:scaling:prelim:limitingValues}.
\begin{lemma}
    \label{lem:scaling:prelim:limitingValues}
    Let $H_{c}$ be the primal Hessian matrix of size $n \times n$ from Section~\ref{sec:specRes:condHess} with $P$ the solution to the discrete-time Lyapunov equation and $S$ an arbitrary matrix.
    Let the system $G_{s}$ be Schur-stable with $m$ inputs and the state space matrices $(A, B, I, 0)$. Define the following two dynamical systems
    \begin{align*}
     G_{Q} &\coloneqq \left\{
     \begin{array}{l}
        x^{+} = Ax + Bu\\
        y = \sqrtm{Q}x
     \end{array}
     \right.
     \\
     G_{QR} &\coloneqq \left\{
     \begin{array}{l}
        x^{+} = Ax + BR^{\sfrac{1}{2}}u\\
        y = \sqrtm{Q}x
     \end{array}
     \right.
    \end{align*}
    and let
    $
        I_{k} \coloneqq \frac{1}{2 \pi}\int_{0}^{2\pi} f_{k}( e^{j\omega}) d\omega
    $
    where
    \begin{equation*}
        \begin{array}{ll}
        f_{1} \coloneqq \trace{ \trans{S} P_{\Gamma} },
        & f_{2} \coloneqq \trace{R \trans{S} P_{\Gamma}},  \\
        f_{3} \coloneqq \trace{ \trans{S} P_{\Gamma} \trans{S} P_{\Gamma} },
        & f_{4} \coloneqq \trace{ \trans{S} P_{\Gamma} \ctrans{P_{\Gamma}} S }, \\
        f_{5} \coloneqq \trace{ \ctrans{P_{\Gamma}} Q P_{\Gamma} \trans{S} P_{\Gamma} }, 
        & f_{6} \coloneqq \trace{ \ctrans{P_{\Gamma}} Q P_{\Gamma} \ctrans{P_{\Gamma}} Q P_{\Gamma} }.
        \end{array}
    \end{equation*}
    Then the quantities
    $$ a \coloneqq \frac{ \trace{H_{c}} }{n}, \quad b \coloneqq \frac{ \trace{H_{c}^2} }{n}, $$
    have limits as $n \to \infty$ of
    \begin{align*}
        a_{l} &\coloneqq \underset{n \to \infty}{\lim} a = \frac{1}{m}\left( \Htwonorm{G_Q}^2 + 2I_{1} + \Fronorm{\sqrtm{R}}^2 \right), \\
        b_{l} &\coloneqq \underset{n \to \infty}{\lim} b 
        \!\begin{multlined}[t]
        = \frac{1}{m} \bigl( I_{6} + 4I_{5} + 4I_{2} + 2I_{3} + 2I_{4} \\
        + \Fronorm{R}^2 + 2\Htwonorm{G_{QR}}^2 \bigr).
        \end{multlined}
    \end{align*}
 \end{lemma}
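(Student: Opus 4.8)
The plan is to reduce both limits to the block-Toeplitz version of Szeg\H{o}'s theorem and then evaluate two integrals of a matrix symbol. First I would record the structural fact already contained in Theorem~\ref{thm:dense:PLyap:hessEig} and the construction of Case~\ref{case:withS}: with $P$ the solution of $\trans{A} P A + Q = P$ and $S$ arbitrary, the Hessian splits as $H_{c} = \hat{H}_{n} - H_{e}$, where $\hat{H}_{n} \coloneqq H_{cP} + \bar{H}_{S}$ is exactly the $n \times n$ truncation of the infinite block-Toeplitz operator with symbol $P_{H_n} \in \Cpi$ of \eqref{eq:spec:sTerm:symHn}, namely $P_{H_n}(z) = \ctrans{P_{\Gamma}(z)} Q P_{\Gamma}(z) + R + \trans{S} P_{\Gamma}(z) + \ctrans{P_{\Gamma}(z)} S$ (using that $H_{cP}$ and $\bar{H}_{S}$ are block-Toeplitz), and $H_{e}$ has rank at most $2m$ by Lemma~\ref{lem:dense:Sterm:HeRank}. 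Since $P_{H_n}$ is Hermitian-valued, continuous and in $\Linf$, the block Szeg\H{o} distribution theorem (see the survey \cite{Gutierrez-Gutierrez2012_blockSurvey} and \cite{Miranda2000}) gives, for any continuous $F$, $\lim_{n\to\infty} \tfrac{m}{n}\trace{F(\hat{H}_{n})} = \tfrac{1}{2\pi}\int_{0}^{2\pi} \trace{F(P_{H_n}(e^{j\omega}))}\, d\omega$. Taking $F(x) = x$ and $F(x) = x^{2}$ then gives $\lim \tfrac{1}{n}\trace{\hat{H}_{n}} = \tfrac{1}{m}\cdot\tfrac{1}{2\pi}\int_{0}^{2\pi} \trace{P_{H_n}(e^{j\omega})}\, d\omega$ and $\lim \tfrac{1}{n}\trace{\hat{H}_{n}^{2}} = \tfrac{1}{m}\cdot\tfrac{1}{2\pi}\int_{0}^{2\pi} \trace{P_{H_n}(e^{j\omega})^{2}}\, d\omega$.

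The second step is to discard the correction $H_{e}$. Writing $\trace{H_{c}} = \trace{\hat{H}_{n}} - \trace{H_{e}}$ and $\trace{H_{c}^{2}} = \trace{\hat{H}_{n}^{2}} - 2\trace{\hat{H}_{n} H_{e}} + \trace{H_{e}^{2}}$, it suffices to bound the $H_{e}$-terms uniformly in $n$: $H_{e}$ has rank at most $2m$ with non-zero eigenvalues converging to those of the fixed matrix $U$ of Lemma~\ref{lem:dense:Sterm:HeRank} (hence $\Twonorm{H_{e}}$ is bounded over $n$), while $\Twonorm{\hat{H}_{n}} \le \lambda_{max}(P_{H_n})$ is bounded, so $\abs{\trace{H_{e}^{2}}} \le 2m\,\Twonorm{H_{e}}^{2}$ and $\abs{\trace{\hat{H}_{n} H_{e}}} \le 2m\,\Twonorm{\hat{H}_{n}}\Twonorm{H_{e}}$ are $O(1)$; dividing by $n \to \infty$ removes them. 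Therefore $a_{l}$ and $b_{l}$ equal exactly the two symbol integrals above.

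It then remains to evaluate those integrals. The key identity is $P_{\Gamma}(z) = z\,G_{s}(z)$ with $\abs{z} = 1$, so $\ctrans{P_{\Gamma}(z)} Q P_{\Gamma}(z) = \ctrans{G_{s}(z)} Q G_{s}(z)$ and, with $G_{s}(z) = (zI-A)^{-1}B$ the transfer function underlying $G_{Q}$ through the output $\sqrtm{Q}$, $\tfrac{1}{2\pi}\int \trace{\ctrans{P_{\Gamma}} Q P_{\Gamma}}\, d\omega = \tfrac{1}{2\pi}\int \Fronorm{\sqrtm{Q}(e^{j\omega}I-A)^{-1}B}^{2}\, d\omega = \Htwonorm{G_{Q}}^{2}$. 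Combined with $\tfrac{1}{2\pi}\int \trace{R}\, d\omega = \trace{R} = \Fronorm{\sqrtm{R}}^{2}$ and the fact that the conjugate summands $\trans{S} P_{\Gamma}$ and $\ctrans{P_{\Gamma}} S$ each integrate to the (real) value $I_{1}$, the first integral is $\Htwonorm{G_{Q}}^{2} + \Fronorm{\sqrtm{R}}^{2} + 2I_{1} = m\,a_{l}$. For $b_{l}$ I would expand $P_{H_n}^{2} = (\ctrans{P_{\Gamma}} Q P_{\Gamma} + R + \trans{S} P_{\Gamma} + \ctrans{P_{\Gamma}} S)^{2}$ into its sixteen products, take traces, and collapse them using cyclicity of the trace together with the reality of each $I_{k}$ (so a product and its Hermitian-conjugate partner contribute equally): $(\ctrans{P_{\Gamma}} Q P_{\Gamma})^{2} \to I_{6}$; the two orderings of $\ctrans{P_{\Gamma}} Q P_{\Gamma}$ with $R$ give $2\Htwonorm{G_{QR}}^{2}$, since $\sqrtm{Q}(zI-A)^{-1}B\sqrtm{R} = z^{-1}\sqrtm{Q} P_{\Gamma}(z)\sqrtm{R}$ has squared Frobenius norm $\trace{\ctrans{P_{\Gamma}} Q P_{\Gamma} R}$; $R^{2} \to \Fronorm{R}^{2}$; the four pairings of $\ctrans{P_{\Gamma}} Q P_{\Gamma}$ with $\trans{S} P_{\Gamma}$ or $\ctrans{P_{\Gamma}} S$ give $4I_{5}$; the squares $(\trans{S} P_{\Gamma})^{2}$ and $(\ctrans{P_{\Gamma}} S)^{2}$ give $2I_{3}$; the products $\trans{S} P_{\Gamma}\ctrans{P_{\Gamma}} S$ give $2I_{4}$; and the four pairings of $R$ with $\trans{S} P_{\Gamma}$ or $\ctrans{P_{\Gamma}} S$ give $4I_{2}$. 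Summing and dividing by $m$ yields exactly $b_{l}$.

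The main obstacle is the first step: invoking the block Szeg\H{o} theorem with its hypotheses genuinely verified for $P_{H_n}$, and supplying the uniform-in-$n$ norm and rank bounds that let $H_{e}$ be dropped. Once that is in place, evaluating the two integrals is essentially bookkeeping, the only point requiring care being to note that each $I_{k}$ is real (which follows from the substitution $\omega \mapsto -\omega$ and the real-valued Fourier coefficients of $P_{\Gamma}$), so that the conjugate pairs of terms in the expansion of $P_{H_n}^{2}$ merge cleanly.
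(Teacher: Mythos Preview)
Your proposal is correct and follows essentially the same approach as the paper: decompose $H_{c}$ into a block-Toeplitz part with symbol $P_{H_n}$ plus the finite-rank correction $H_{e}$, invoke the block Szeg\H{o} theorem for $F(x)=x$ and $F(x)=x^{2}$, show the $H_{e}$-contributions vanish after dividing by $n$, and then expand the symbol integrals. If anything, you are more explicit than the paper on two points the paper handles tersely---the uniform-in-$n$ bounds needed so that the finite-rank cross terms $\trace{\hat{H}_n H_e}$ and $\trace{H_e^2}$ really are $O(1)$, and the term-by-term bookkeeping in the expansion of $P_{H_n}^{2}$---so no gap.
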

 \begin{proof}
    See Appendix~\ref{app:proof:limitingValues}.
 \end{proof}
 Unlike the results in Section~\ref{sec:specRes:condHess} where multiple cases had to be examined, the result given in Lemma~\ref{lem:scaling:prelim:limitingValues} holds for every case presented in Section~\ref{sec:specRes:condHess} with no modifications.
 This occurs because the normalized trace of a finite rank matrix (such as $H_{e}$) goes to $0$ as $n \to \infty$, which leaves only the Toeplitz component of the Hessian.
 Results similar to Lemma~\ref{lem:scaling:prelim:limitingValues} could also be derived for other values of $P$, provided that $H_{c}$ can be decomposed into a Toeplitz component plus a finite-rank correction term.
 
To analyze the effect of scaling the weight matrices, we can utilize the linearity of the trace to scale the various terms in Lemma~\ref{lem:scaling:prelim:limitingValues}, giving the following result.
 \begin{lemma}
    \label{lem:scaling:prelim:normScaling}
    Let $\hat{Q} \coloneqq \alpha_1 Q$, $\hat{R} \coloneqq \alpha_2 R$ and $\hat{S} \coloneqq \alpha_3 S$ be scaled versions of the weight matrices. Then if the scaled matrices are substituted into the terms used in Lemma~\ref{lem:scaling:prelim:limitingValues}, the terms scale as
    \begin{gather*}
    \begin{align*}
        \hat{I}_{1} &\coloneqq \alpha_{3} I_{1}, &
        \hat{I}_{2} &\coloneqq \alpha_{2} \alpha_{3} I_{2}, &
        \hat{I}_{3} &\coloneqq \alpha_{3}^2 I_{3}, \\
        \hat{I}_{4} &\coloneqq \alpha_{3}^2 I_{4}, &
        \hat{I}_{5} &\coloneqq \alpha_{1} \alpha_{3} I_{5}, &
        \hat{I}_{6} &\coloneqq \alpha_{1}^2 I_{6}, \\
    \end{align*}\\
    \begin{align*}
         \Htwonorm{G_{\hat{Q}}}^2 &\coloneqq \alpha_1 \Htwonorm{G_{Q}}^2, & \norm{\hat{R}}_{F}^2 &\coloneqq \alpha_2^2 \Fronorm{R}^{2}, \\
         \Htwonorm{G_{\hat{Q}\hat{R}}}^2 &\coloneqq \alpha_1 \alpha_2 \Htwonorm{G_{QR}}^2, &  \Fronorm{\sqrtm{\hat{R}}}^2 &\coloneqq \alpha_2 \Fronorm{\sqrtm{R}}^{2}.
    \end{align*}
    \end{gather*}
 \end{lemma}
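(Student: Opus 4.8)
The plan is to reduce everything to two elementary facts: the prediction symbol $P_{\Gamma}$ from Lemma~\ref{lem:gammaSymbol} depends only on $(A,B)$ and so is untouched by any scaling of $Q$, $R$ or $S$; and the trace is linear and cyclic, while the averaging integral $I_{k}=\frac{1}{2\pi}\int_{0}^{2\pi}f_{k}(e^{j\omega})\,d\omega$ is linear in its integrand. Granting these, the statement becomes a bookkeeping exercise that counts how many factors of each weight matrix appear in each expression.

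First I would substitute $\hat{Q}=\alpha_{1}Q$, $\hat{R}=\alpha_{2}R$, $\hat{S}=\alpha_{3}S$ into each $f_{k}$ and pull the scalars outside the trace: $f_{1}$ carries a single factor of $S$, hence scales by $\alpha_{3}$; $f_{2}$ carries one $R$ and one $S$, hence by $\alpha_{2}\alpha_{3}$; $f_{3}$ and $f_{4}$ each carry two $S$'s, hence by $\alpha_{3}^{2}$; $f_{5}$ carries one $Q$ and one $S$, hence by $\alpha_{1}\alpha_{3}$; and $f_{6}$ carries two $Q$'s, hence by $\alpha_{1}^{2}$. Since these scalars are constant in $\omega$ they come straight out of the defining integral, which yields the asserted $\hat{I}_{k}$.

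Next I would treat the norm terms. The Frobenius terms are immediate from $\sqrtm{(\alpha_{2}R)}=\sqrt{\alpha_{2}}\,\sqrtm{R}$: namely $\Fronorm{\sqrtm{\hat{R}}}^{2}=\trace{\hat{R}}=\alpha_{2}\trace{R}=\alpha_{2}\Fronorm{\sqrtm{R}}^{2}$ and $\Fronorm{\hat{R}}^{2}=\trace{\hat{R}^{2}}=\alpha_{2}^{2}\Fronorm{R}^{2}$. For the $H_{2}$ norms I would use the Parseval form underlying Lemma~\ref{lem:scaling:prelim:limitingValues}: the transfer matrix of $G_{Q}$ is $\sqrtm{Q}\,z^{-1}P_{\Gamma}(z)$, so on the unit circle $\Htwonorm{G_{Q}}^{2}=\frac{1}{2\pi}\int_{0}^{2\pi}\trace{\ctrans{P_{\Gamma}}QP_{\Gamma}}\,d\omega$, which is linear in $Q$ and therefore scales by $\alpha_{1}$; similarly the transfer matrix of $G_{QR}$ is $\sqrtm{Q}\,z^{-1}P_{\Gamma}(z)\,\sqrtm{R}$, and after using cyclicity of the trace to collect the two $\sqrtm{R}$ factors one obtains $\Htwonorm{G_{QR}}^{2}=\frac{1}{2\pi}\int_{0}^{2\pi}\trace{R\,\ctrans{P_{\Gamma}}QP_{\Gamma}}\,d\omega$, which is linear in $Q$ and in $R$ separately and hence scales by $\alpha_{1}\alpha_{2}$. (Equivalently one may use the observability-Gramian form $\Htwonorm{G_{Q}}^{2}=\trace{\trans{B}W_{o}B}$ with $\trans{A}W_{o}A+Q=W_{o}$, noting $W_{o}$ is linear in $Q$.)

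There is essentially no obstacle here; the only point that deserves a moment's care is the $G_{QR}$ term, where the scaled input weight enters the \emph{dynamics} as $\sqrtm{\hat{R}}=\sqrt{\alpha_{2}}\,\sqrtm{R}$, contributing a factor $\sqrt{\alpha_{2}}$ on each side of the trace; these combine with the single $\alpha_{1}$ from $\hat{Q}$ to give the claimed $\alpha_{1}\alpha_{2}$, and one should verify that the cyclic rearrangement collecting the $\sqrtm{R}$ factors is valid — which it is, since all matrices involved are square.
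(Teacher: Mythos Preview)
Your proposal is correct and follows the same approach as the paper, which simply states that the results follow from the linearity of the trace and the integral operator. You have fleshed out in detail what the paper leaves implicit, including the observation that $P_{\Gamma}$ is independent of the weight matrices and the careful handling of the $G_{QR}$ term via cyclicity of the trace; nothing more is needed.
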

 \begin{proof}
    These results follow from the linearity of the trace and the integral operator.
 \end{proof}

\subsection{Extremal Eigenvalues}
\label{sec:weightScaling:eigenvalues}

In the complexity analysis of some algorithms, the extremal eigenvalues of the Hessian matrix (both dual and primal) appear as a factor.
 This means that in order to understand how the complexity scales with the weight matrices, it is important to understand how the extremal eigenvalues scale.

\subsubsection{Primal Hessian}

We begin by deriving bounds on the extremal eigenvalues for the primal Hessian matrix.
 \begin{lemma}
    \label{lem:weightScaling:eigen:bound}
    Let $H_{c}$ be the primal Hessian matrix from Section~\ref{sec:specRes:condHess}.
    Then 
    \begin{equation*}
        0 < \lambda_{min}(H_{c})\leq a_{l} \leq \lambda_{max}(H_{c})
    \end{equation*}
    where $a_l$ is defined in Lemma~\ref{lem:scaling:prelim:limitingValues}.
 \end{lemma}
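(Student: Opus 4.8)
The plan is to exploit one elementary observation: the trace of a Hermitian matrix is the sum of its eigenvalues, so its \emph{normalized} trace is the arithmetic mean of the eigenvalues and therefore lies between the smallest and the largest eigenvalue. First I would fix a horizon $N$ and let $n$ denote the size of $H_{c}$. Since $H_{c}$ is Hermitian, $\trace{H_{c}} = \sum_{i=1}^{n}\lambda_{i}(H_{c})$, so $a = \trace{H_{c}}/n$ is a convex combination of the eigenvalues of $H_{c}$, which immediately gives the two-sided bound $\lambda_{1}(H_{c}) \le a \le \lambda_{n}(H_{c})$. Moreover $H_{c}$ is positive definite by hypothesis (it is the Hessian of a strictly convex QP), so $\lambda_{1}(H_{c}) > 0$.

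Next I would pass to the horizon-independent limit. Applying the bracket to each $n \times n$ truncation and letting $n \to \infty$, the left-hand side converges to $\lambda_{min}(H_{c})$ and the right-hand side to $\lambda_{max}(H_{c})$ by their defining limits, while the middle term converges to $a_{l}$ by Lemma~\ref{lem:scaling:prelim:limitingValues}. Non-strict inequalities are preserved under limits, so this yields $\lambda_{min}(H_{c}) \le a_{l} \le \lambda_{max}(H_{c})$. (In the purely Toeplitz cases this is in fact sharper than it looks, since the normalized trace equals the zeroth Fourier coefficient of the symbol and is thus already independent of $N$.)

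It remains to upgrade $\lambda_{min}(H_{c}) \ge 0$ to the strict inequality $0 < \lambda_{min}(H_{c})$. Because the stage-cost weighting matrix is positive definite, so is its principal block $R$, whence the symbol $P_{H_{cQ}}(z) = \ctrans{P_{\Gamma}(z)} Q P_{\Gamma}(z) + R \succeq R$ is uniformly positive definite on $\mathbb{T}$; then Theorem~\ref{thm:dense:PQ:hessEig:bounds} gives $\lambda_{min}(H_{c}) \ge \lambda_{min}(P_{H_{cQ}}) \ge \lambda_{min}(R) > 0$ in Cases~\ref{case:noPnoS} and~\ref{case:Plyap}. In Case~\ref{case:withS} the same conclusion follows by running the argument through the nominal symbol $P_{H_{n}}$, which is a congruence of the positive definite stage-cost matrix and hence uniformly positive definite, together with the positive definiteness of every finite truncation $H_{cS}$. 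The only genuinely delicate point in the whole argument is this last one --- ensuring that $\lambda_{min}$ does not collapse to zero in the limit rather than merely staying nonnegative --- and it is handled entirely by the uniform positivity of the matrix symbols inherited from the positive definite cost; everything else is the mean-of-eigenvalues bracket plus the convergence already established in Lemma~\ref{lem:scaling:prelim:limitingValues}.
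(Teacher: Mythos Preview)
Your argument is correct and is in fact more elementary than the paper's. The paper reaches the same conclusion by invoking the Wolkowicz--Styan eigenvalue bounds from \cite{Styan1983}, namely $m-sp\le\lambda_{\min}\le m-s/p$ and $m+s/p\le\lambda_{\max}\le m+sp$ with $m=a$, $s=\sqrt{b-a^{2}}$, $p=\sqrt{n-1}$, and then lets $n\to\infty$ so that the $s/p$ correction terms vanish and both the upper bound on $\lambda_{\min}$ and the lower bound on $\lambda_{\max}$ collapse to $a_{l}$. You bypass this entirely by observing directly that the normalized trace is the arithmetic mean of the eigenvalues, which already gives $\lambda_{1}\le a\le\lambda_{n}$ for every finite $n$ without any reference to $b$; passing to the limit then yields the claim. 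The paper's route has the minor advantage that it comes packaged with the outer bounds $m\pm sp$ and naturally sets up the later condition-number Lemma~\ref{lem:condScaling:condLowerBound} (which genuinely needs $b$), but for the present statement your mean-of-eigenvalues argument is strictly simpler. On strict positivity you actually give more detail than the paper, which just asserts it from positive definiteness of $H_{c}$; your symbol-based lower bound via $P_{H_{cQ}}\succeq R$ is the right way to make this uniform in $N$, and your caveat about Case~\ref{case:withS} is fair---neither you nor the paper fully closes that point.
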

 \begin{proof}
    Bounds on the extremal eigenvalues were given in \cite[Theorem~2.1]{Styan1983} as
    \begin{align*}
        m - sp &\leq \lambda_{min} \leq m - \sfrac{s}{p}\\
        m - \sfrac{s}{p} &\leq \lambda_{max} \leq m + sp
    \end{align*}
    where $m \coloneqq a$, $s \coloneqq \sqrt{b - a^2}$ and $p \coloneqq \sqrt{n-1}$.
    The limit of these bounds as $n \to \infty$ gives a finite upper bound for $\lambda_{min}$ and a finite lower bound for $\lambda_{max}$, both equal to $a_{l}$.
    The upper bound on $\lambda_{max}$ is in general not finite, and the lower bound for $\lambda_{min}$ is in general strictly greater-than $0$ since $H_{c}$ is positive definite.
 \end{proof}
 Equality can occur when $a^2 = b$ (e.g.\ $\trace{H_{c}} = \Fronorm{H_{c}}^2$), since that makes $s=0$.
 Since $H_{c}$ is positive definite though, this only can occur when all eigenvalues of $H_{c}$ are $1$.

The bound in Lemma~\ref{lem:weightScaling:eigen:bound} essentially creates a dividing line between the extremal eigenvalues.
 The effect of the weight matrix scaling on this dividing line can then be estimated.
 \begin{theorem}
    \label{thm:weightScaling:eigen:scalingBound}
    Let $\hat{H}_c$ be the primal Hessian matrix with the scaled weight matrices $\hat{Q} \coloneqq \alpha_1 Q$, $\hat{R} \coloneqq \alpha_2 R$ and $\hat{S} \coloneqq \alpha_3 S$.
    Then, the bound on the extremal eigenvalues given in Lemma~\ref{lem:weightScaling:eigen:bound} grows linearly with $\alpha_{1}$, $\alpha_{2}$ and $\alpha_{3}$.
 \end{theorem}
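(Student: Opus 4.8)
The plan is to combine Lemma~\ref{lem:scaling:prelim:limitingValues}, which expresses the limiting value $a_l$ of the normalized trace $\trace{H_c}/n$ as an explicit sum of system norms and integral terms, with Lemma~\ref{lem:scaling:prelim:normScaling}, which records how each of those constituent terms rescales when the weight matrices are scaled by $\alpha_1, \alpha_2, \alpha_3$. Since Lemma~\ref{lem:weightScaling:eigen:bound} tells us that $a_l$ is the "dividing line" $0 < \lambda_{min}(H_c) \le a_l \le \lambda_{max}(H_c)$, it suffices to show that the scaled quantity $\hat a_l$, obtained by substituting the scaled matrices $\hat Q, \hat R, \hat S$, is an affine (indeed linear-in-each-variable) function of $\alpha_1, \alpha_2, \alpha_3$ with nonnegative coefficients, so that it grows linearly along each coordinate direction.

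First I would write out $\hat a_l$ by substituting the scaled terms from Lemma~\ref{lem:scaling:prelim:normScaling} into the formula for $a_l$ in Lemma~\ref{lem:scaling:prelim:limitingValues}. Recalling $a_l = \frac{1}{m}\bigl(\Htwonorm{G_Q}^2 + 2I_1 + \Fronorm{\sqrtm{R}}^2\bigr)$, and using $\Htwonorm{G_{\hat Q}}^2 = \alpha_1 \Htwonorm{G_Q}^2$, $\hat I_1 = \alpha_3 I_1$, and $\Fronorm{\sqrtm{\hat R}}^2 = \alpha_2 \Fronorm{\sqrtm{R}}^2$, we obtain
\begin{equation*}
    \hat a_l = \frac{1}{m}\left( \alpha_1 \Htwonorm{G_Q}^2 + 2\alpha_3 I_1 + \alpha_2 \Fronorm{\sqrtm{R}}^2 \right).
\end{equation*}
This is manifestly linear in each of $\alpha_1, \alpha_2, \alpha_3$ separately. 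The coefficients of $\alpha_1$ and $\alpha_2$ are nonnegative (a squared $H_2$ norm and a squared Frobenius norm), so along those axes the dividing line increases monotonically and unboundedly. I would then invoke Lemma~\ref{lem:weightScaling:eigen:bound} applied to $\hat H_c$: since $\hat a_l \le \lambda_{max}(\hat H_c)$ and $\hat a_l$ grows linearly with each $\alpha_i$, the bound on the extremal eigenvalues grows linearly as claimed. (The statement only asserts linear growth of the bound, not a definite sign for the $I_1$ term, so no positivity argument for $I_1$ is needed.)

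The only mild subtlety — and the step I would be most careful about — is making sure the scaling relations of Lemma~\ref{lem:scaling:prelim:normScaling} are applied to the correct terms and that the independence of the three scalings is respected (e.g.\ $G_{\hat Q}$ depends only on $\alpha_1$, not on $\alpha_2$ or $\alpha_3$, because its input matrix is $B$, not $BR^{\sfrac{1}{2}}$). Once that bookkeeping is in place, the result is immediate; there is no genuine obstacle beyond correctly transcribing the affine structure. I would close by remarking that the same substitution applied to $b_l$ shows $\hat b_l$ is a quadratic form in $(\alpha_1,\alpha_2,\alpha_3)$, which is what will be needed for the condition-number scaling results that follow.
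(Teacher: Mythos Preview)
Your proposal is correct and follows exactly the paper's own approach: the paper's proof is the single sentence ``Combine the bounds in Lemma~\ref{lem:weightScaling:eigen:bound} with the scalings in Lemma~\ref{lem:scaling:prelim:normScaling},'' and you have simply written out that combination explicitly by substituting the scaled terms into $a_l$. Your additional bookkeeping remarks and the observation about $\hat b_l$ being quadratic are accurate elaborations but not required for the theorem as stated.
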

 \begin{proof}
    Combine the bounds in Lemma~\ref{lem:weightScaling:eigen:bound} with the scalings in Lemma~\ref{lem:scaling:prelim:normScaling}.
 \end{proof}

From Theorem~\ref{thm:weightScaling:eigen:scalingBound} it can be seen that the bounds on the extremal eigenvalues grow linearly with the scaling of the weight matrices.
 This is demonstrated in Figure~\ref{fig:scaledWeightMatrix:specMin} and~\ref{fig:scaledWeightMatrix:specMax} for the case when $S=0$, leaving only $\alpha_{1}$ and $\alpha_{2}$.
 When only one matrix is being scaled there are two distinct regions in the bound: $Q$ dominating and $R$ dominating.
 Since this bound is both an upper bound for $\lambda_{min}$ and a lower bound for $\lambda_{max}$, the transition between the two regions when $\alpha_1$ is being increased will occur earlier for $\lambda_{max}$ than $\lambda_{min}$.
 The region where $R$ dominates when only $Q$ is scaled is shown as a shaded region in Figure~\ref{fig:scaledWeightMatrix:specMin} and~\ref{fig:scaledWeightMatrix:specMax}.
 
\begin{figure}[tb]
    \centering
    
    \pgfplotsset{abs_actual/.style={dotted, thick, red, every mark/.append style={solid, fill=white}, mark = *}}
    \pgfplotsset{rel_actual/.style={dashed, thick, red, every mark/.append style={solid, fill=white}, mark = *}}
    \pgfplotsset{abs_bound/.style={dotted, thick, blue, every mark/.append style={solid, fill=gray}, mark = triangle}}
    \pgfplotsset{rel_bound/.style={dashed, thick, blue, every mark/.append style={solid, fill=gray}, mark = triangle}}
    
    \pgfplotsset{
        legend image with text/.style={
            legend image code/.code={%
                \node[anchor=center] at (0.3cm,0cm) {#1};
            }
        },
    }
    
    \begin{tikzpicture}
        \pgfplotstableread[col sep=comma]{figures/data/WeightScaling_Absolute_JonesMorari.csv}{\absdata}
        \pgfplotstableread[col sep=comma]{figures/data/WeightScaling_Relative_JonesMorari.csv}{\reldata}

        \begin{groupplot}[group style = {group name=plots, group size = 1 by 3, horizontal sep=0cm, vertical sep=0.5cm}]
            
            \nextgroupplot[xmin   = 0.0001,
                           xmax   = 10000,
                           ymin   = 0.001,
                           ymax   = 1000000,
                           xmajorticks = false,
                           grid = major,
                           ylabel = {$\lambda_{min}(H_{cP})$},
                           ylabel shift = -9 pt,
                           height = 4.5cm,
                           width  = 0.5\textwidth,
                           ymode  = log,
                           xmode  = log,
                           title = {\pgfplotslegendfromname{condPlotleg}}]
                 
                \addplot[abs_actual] table [x=Scaling, y=minE] {\absdata};
                
                \addplot[rel_actual] table [x=Scaling, y=minE] {\reldata};
                
                \addplot[rel_bound] table [x=Scaling, y=minE_ub] {\reldata};

                \addplot[abs_bound] table [x=Scaling, y=minE_ub] {\absdata};

                \addplot[ draw=none, fill=green!40, opacity=0.3] coordinates {(10, 1000000) (0.0001, 1000000) (0.0001, 0.001) (10, 0.001)} \closedcycle;

            \nextgroupplot[xmin   = 0.0001,
                           xmax   = 10000,
                           ymin   = 0.001,
                           ymax   = 1000000,
                           xmajorticks = false,
                           grid = major,
                           ylabel = {$\lambda_{max}(H_{cP})$},
                           ylabel shift = -9 pt,
                           height = 4.5cm,
                           width  = 0.5\textwidth,
                           ymode  = log,
                           xmode  = log]

                \addplot[abs_actual] table [x=Scaling, y=maxE] {\absdata};
                
                \addplot[rel_actual] table [x=Scaling, y=maxE] {\reldata};
                
                \addplot[rel_bound] table [x=Scaling, y=maxE_lb] {\reldata};
                
                \addplot[abs_bound] table [x=Scaling, y=maxE_lb] {\absdata};
                
                \addplot[ draw=none, fill=green!40, opacity=0.3] coordinates {(0.1, 1000000) (0.0001, 1000000) (0.0001, 0.001) (0.1, 0.001)} \closedcycle;

            \nextgroupplot[xmin   = 0.0001,
                           xmax   = 10000,
                           ymin   = 1,
                           ymax   = 1000,
                           grid = major,
                           xlabel = {$\beta$},
                           ylabel = {$\kappa(H_{cP})$},
                           ylabel shift = -7 pt,
                           x tick label style={rotate=90,anchor=east},
                           height = 5cm,
                           width  = 0.5\textwidth,
                           ymode  = log,
                           xmode  = log,
                           legend columns=3,
                           legend to name=condPlotleg]

                \addlegendimage{empty legend}
                \addlegendentry{}
                
                \addlegendimage{legend image with text=Actual}
                \addlegendentry{}

                \addlegendimage{legend image with text=Bounds}
                \addlegendentry{}
                
                \addlegendimage{empty legend}
                \addlegendentry{Relative ($\alpha_1 = \beta$, $\alpha_2 = 1$)}
                
                \addplot[rel_actual] table [x=Scaling, y=K] {\reldata};
                \addlegendentry{}
                
                \addplot[rel_bound] table [x=Scaling, y=K_lb] {\reldata};
                \addlegendentry{}

                \addlegendimage{empty legend}
                \addlegendentry{Absolute ($\alpha_1 = \alpha_2 = \beta$)}
                
                \addplot[abs_actual] table [x=Scaling, y=K] {\absdata};
                \addlegendentry{}
                
                \addplot[abs_bound] table [x=Scaling, y=K_lb] {\absdata};
                \addlegendentry{}
                
                \addplot[ draw=none, fill=blue!40, opacity=0.3] coordinates {(0.0001, 1) (0.0001, 1000) (0.1, 1000) (0.1, 1)} \closedcycle;
                \addplot[ draw=none, fill=blue!40, opacity=0.3] coordinates {(10000, 1) (10000, 1000) (10, 1000) (10, 1)} \closedcycle;
                \node[anchor=center] at (0.004,500) {$\alpha_1 \ll \alpha_2$};
                \node[anchor=center] at (400,500) {$\alpha_1 \gg \alpha_2$};
                
        \end{groupplot}
        
        \node [text width=1em,anchor=center] at ($(plots c1r1.north)-(1.1em,0.7em)$) {\subfloat[]{\label{fig:scaledWeightMatrix:specMin}}};
        \node [text width=1em,anchor=center] at ($(plots c1r2.north)-(1.1em,0.7em)$) {\subfloat[]{\label{fig:scaledWeightMatrix:specMax}}};
        \node [text width=1em,anchor=center] at ($(plots c1r3.north)-(1.1em,0.7em)$) {\subfloat[]{\label{fig:scaledWeightMatrix:cond}}};
    \end{tikzpicture}
    
    \caption{The bounds on the extremal eigenvalues from Lemma~\ref{lem:weightScaling:eigen:bound} and the condition number from Theorem~\ref{thm:condScaling:scaledBound} are compared against the actual asymptotic results from Theorem~\ref{thm:dense:PLyap:hessEig} for various scalings of $Q$ and $R$ matrices in System~\ref{sys:jonesMorari}.}
    \label{fig:scaledWeightMatrix}
\end{figure}

\subsubsection{Dual Hessian}

For the dual Hessian matrix, the largest eigenvalue can be bounded through the spectral norm, as was done in Proposition~\ref{prop:dense:dual:arbMatrixUB}.
 This bound is affected by both the spectrum of the constraint matrix $G$ and the primal Hessian matrix $H_{c}$.
 When the weight matrices are scaled, only the primal Hessian is affected, which means only the $\lambda_{min}(H_c)$ term in the denominator will change.
 Unfortunately, since the lower bound in Theorem~\ref{thm:weightScaling:eigen:scalingBound} is 0, a direct upper bound on $\norm{H_d}_2$ cannot be computed.

We can instead use the upper bound for $\lambda_{min}$ given in Lemma~\ref{lem:weightScaling:eigen:bound} to examine how the bound from Proposition~\ref{prop:dense:dual:arbMatrixUB} changes with weight scaling.
 It is known from Theorem~\ref{thm:weightScaling:eigen:scalingBound} that there is a linear relation between the magnitude of the cost matrices and the upper bound for $\lambda_{min}$.
 This implies that there will be an inverse relation between the cost scaling and $\lambda_{max}$ of the dual Hessian:
 as either $\alpha_1$, $\alpha_2$ or $\alpha_3$ grow, the bound on $\hat{\gamma}$ will shrink.


\subsection{Condition Number}
\label{sec:weightScaling:cond}

We focus the analysis in this section on the primal Hessian~$H_{p}$, since the condition number of the dual Hessian~$H_{d}$ is in general unbounded due to $H_{d}$ being positive semidefinite.
 The results presented in Section~\ref{sec:specRes:condHess} provide a means of calculating the condition number estimates for a given set of weighting matrices, but provide little intuition into the general effect of matrix scaling.
 To examine the effect of scaling, we utilize a lower bound for the condition number of the Hessian matrix.
\begin{lemma}
    \label{lem:condScaling:condLowerBound}
    Let $a_{l}$ and $b_{l}$ be defined in Lemma~\ref{lem:scaling:prelim:limitingValues}.
    The primal Hessian matrix $H_c$ has a lower bound on the condition number given by
    \begin{equation*}
       \kappa(H_c) \geq 1 + 2 \frac{\sqrt{ b_{l} - a_{l}^2}}{a_{l}}
    \end{equation*}
\end{lemma}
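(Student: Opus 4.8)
The plan is to reduce the claim to an elementary moment inequality for the eigenvalue multiset of $H_c$ and then pass to the horizon limit. Since $H_c$ is a Hessian of a quadratic it is real symmetric, and by assumption positive definite, so it has $n$ real eigenvalues $0 < \lambda_1 \le \dots \le \lambda_n$. Writing $\mu$ for their mean, the identities $\trace{H_c} = \sum_i \lambda_i$ and $\trace{H_c^2} = \sum_i \lambda_i^2$ show that the quantities $a = \trace{H_c}/n$ and $b = \trace{H_c^2}/n$ of Lemma~\ref{lem:scaling:prelim:limitingValues} are exactly the first and second moments of the spectrum, so $a = \mu$ and $s^2 \coloneqq b - a^2 = \frac1n\sum_i (\lambda_i - \mu)^2 \ge 0$ is its variance. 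Put $m \coloneqq \lambda_1$ and $M \coloneqq \lambda_n$, so that $\kappa(H_c) = M/m$.

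First I would bound the spectral spread by the variance. Each term of $\frac1n\sum_i (M - \lambda_i)(\lambda_i - m)$ is nonnegative because $m \le \lambda_i \le M$, and expanding the sum gives $(M+m)\mu - b - Mm \ge 0$; subtracting $a^2 = \mu^2$ yields the Bhatia--Davis inequality $s^2 \le (M - \mu)(\mu - m)$. Applying the arithmetic--geometric mean inequality to its right-hand side gives $(M-\mu)(\mu-m) \le \big((M-m)/2\big)^2$, hence $M - m \ge 2s$. Since $m = \lambda_1 \le \mu = a$ (the minimum of a list never exceeds its mean), we get $\kappa(H_c) = M/m = 1 + (M-m)/m \ge 1 + 2s/m \ge 1 + 2s/a$, which is the asserted bound with the finite-horizon moments $a,b$ in place of $a_l,b_l$.

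To finish, I would take $N \to \infty$: Lemma~\ref{lem:scaling:prelim:limitingValues} gives $a \to a_l$ and $b \to b_l$, with $a_l > 0$ by Lemma~\ref{lem:weightScaling:eigen:bound}, while Theorems~\ref{thm:dense:PQ:hessEig} and~\ref{thm:dense:PLyap:hessEig} (together with the finite-rank correction argument of Lemma~\ref{lem:dense:Sterm:HeRank} in the $S\ne0$ case) give convergence of $\kappa(H_c)$ to its horizon-independent value; continuity of $t \mapsto 1 + 2\sqrt{t}$ then transfers the inequality into the stated form. I do not expect a genuine obstacle here — the only points requiring care are (i) being explicit that $\kappa(H_c)$ denotes the horizon-independent condition number, i.e. that the finite-$N$ inequality of the previous paragraph is passed to the limit, and (ii) keeping the variance normalization consistent with Lemma~\ref{lem:scaling:prelim:limitingValues}, dividing by $n$ rather than $n-1$, which already matches the Styan estimates used in Lemma~\ref{lem:weightScaling:eigen:bound}. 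A shortcut worth checking is whether the bound follows directly from those Styan estimates $\lambda_{min}\le a - s/p$, $\lambda_{max}\ge a + s/p$, but the term $s/p$ with $p=\sqrt{n-1}$ vanishes in the limit, so the Bhatia--Davis route above appears necessary.
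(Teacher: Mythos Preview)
Your argument is correct, but it takes a different route from the paper. The paper does not derive the finite-$n$ inequality from scratch: it simply invokes a companion result to the Styan estimates you cite at the end, namely \cite[Corollary~2.3]{Styan1983}, which directly gives
\[
\kappa(H_c)\ \ge\ 1 + \frac{2s}{a - s/p},\qquad p=\sqrt{n-1},\ s=\sqrt{b-a^2},
\]
and then lets $n\to\infty$ so that $s/p\to 0$. So the shortcut you dismissed is in fact the paper's proof, just using a different Styan statement (the condition-number corollary rather than the eigenvalue theorem used in Lemma~\ref{lem:weightScaling:eigen:bound}).

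Your Bhatia--Davis plus AM--GM derivation yields the slightly weaker finite-$n$ bound $\kappa\ge 1+2s/a$, which coincides with the paper's in the limit. What you gain is self-containment: you do not need the external reference, and your inequality $M-m\ge 2s$ together with $\lambda_1\le a$ makes the mechanism transparent. What the paper gains is brevity and a marginally sharper pre-limit estimate. Note also that you do not actually need convergence of $\kappa(H_c)$ itself to pass to the limit (the $S\ne0$ case in Theorem~\ref{thm:dense:Sterm:hessEig} only gives an upper bound, not convergence); since your finite-$n$ inequality holds for every $n$, taking $\liminf$ of the left side and using $a\to a_l$, $b\to b_l$ on the right suffices, exactly as the paper does implicitly.
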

\begin{proof}
    See Appendix \ref{app:condAsympBoundProof}.
\end{proof}

The lower bound presented in Lemma~\ref{lem:condScaling:condLowerBound} is horizon-independent, and holds for any choice of $S$ and either $P = Q$ or $P$ the solution to the discrete-time Lyapunov equation.
 This lower bound represents the best possible condition number that can be obtained.
 Unfortunately, knowledge of the worst possible condition number (e.g.\ an upper bound) cannot be obtained in a horizon-independent manner since lower-bounds on the smallest eigenvalue of $H_c$ go to $0$ as the matrix size increases.
 
To more closely examine the effect of the matrix scaling, we examine the case when $S = 0$ and only $Q$ and $R$ are scaled.
 We assume that $P$ is either $Q$ or the solution to the discrete-time Lyapunov equation.
 For this case, the lower bound from Lemma~\ref{lem:condScaling:condLowerBound} becomes the bound given in Theorem~\ref{thm:condScaling:scaledBound}.
\begin{theorem}
    \label{thm:condScaling:scaledBound}
    Let $\hat{H}_c$ be the Hessian matrix with the scaled weight matrices $\hat{Q} \coloneqq \alpha_1 Q$, $\hat{R} \coloneqq \alpha_2 R$ and $S = 0$. Then given the dynamical systems and the integrals in Lemma~\ref{lem:scaling:prelim:limitingValues}, a lower bound for the condition number of $\hat{H}_c$ is
    \begin{equation*}
        \kappa(\hat{H}_c) \geq 1 + 2 \frac{\sqrt{ \alpha_1^2 n_1 + 2\alpha_1 \alpha_2 n_2 + \alpha_2^2 n_3}}{\alpha_1 \Htwonorm{G_{Q}}^2 + \alpha_2 \Fronorm{\sqrtm{R}}^{2}},
    \end{equation*}
    with
    \begin{equation*}
        \begin{array}{ll}
            n_1 \coloneqq m I_{6} - \Htwonorm{G_{Q}}^4, & n_3 \coloneqq m \Fronorm{R}^{2} - \Fronorm{\sqrtm{R}}^{4}, \\
            \multicolumn{2}{l}{ n_2 \coloneqq m \Htwonorm{G_{QR}}^2 - \Htwonorm{G_{Q}}^2 \Fronorm{R^{\sfrac{1}{2}}}^{2}. }
        \end{array}
    \end{equation*}
\end{theorem}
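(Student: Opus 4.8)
The plan is to start from the horizon-independent lower bound $\kappa(H_c) \ge 1 + 2\sqrt{b_l - a_l^2}/a_l$ of Lemma~\ref{lem:condScaling:condLowerBound} (which applies to $\hat H_c$, since that lemma and Lemma~\ref{lem:scaling:prelim:limitingValues} both cover the cases $P = Q$ and $P$ the Lyapunov solution) and to evaluate $a_l$ and $b_l$ for $S = 0$ with the scaled weights $\hat Q = \alpha_1 Q$, $\hat R = \alpha_2 R$. The argument is essentially algebraic: specialize, rescale, and collect monomials in $\alpha_1,\alpha_2$.

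First I would put $S = 0$ in Lemma~\ref{lem:scaling:prelim:limitingValues}. Each of $f_1,\dots,f_5$ carries a factor $S$ or $\trans{S}$, so $I_1 = \dots = I_5 = 0$ and only $I_6$ and the norm terms survive, leaving $a_l = \tfrac1m\bigl(\Htwonorm{G_Q}^2 + \Fronorm{\sqrtm{R}}^2\bigr)$ and $b_l = \tfrac1m\bigl(I_6 + \Fronorm{R}^2 + 2\Htwonorm{G_{QR}}^2\bigr)$. Next I would apply the scaling rules of Lemma~\ref{lem:scaling:prelim:normScaling} ($\alpha_3$ being immaterial when $S=0$): $\Htwonorm{G_Q}^2 \mapsto \alpha_1\Htwonorm{G_Q}^2$, $\Fronorm{\sqrtm{R}}^2 \mapsto \alpha_2\Fronorm{\sqrtm{R}}^2$, $I_6 \mapsto \alpha_1^2 I_6$, $\Fronorm{R}^2 \mapsto \alpha_2^2\Fronorm{R}^2$, $\Htwonorm{G_{QR}}^2 \mapsto \alpha_1\alpha_2\Htwonorm{G_{QR}}^2$. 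Writing $\hat a_l,\hat b_l$ for the rescaled scalars, $\hat a_l = \tfrac1m\bigl(\alpha_1\Htwonorm{G_Q}^2 + \alpha_2\Fronorm{\sqrtm{R}}^2\bigr)$ is precisely $1/m$ times the denominator of the claimed bound.

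The last step is to expand $m^2(\hat b_l - \hat a_l^2)$: here $m^2\hat b_l = m\bigl(\alpha_1^2 I_6 + 2\alpha_1\alpha_2\Htwonorm{G_{QR}}^2 + \alpha_2^2\Fronorm{R}^2\bigr)$, and squaring the sum in $\hat a_l$ gives $m^2\hat a_l^2 = \alpha_1^2\Htwonorm{G_Q}^4 + 2\alpha_1\alpha_2\Htwonorm{G_Q}^2\Fronorm{\sqrtm{R}}^2 + \alpha_2^2\Fronorm{\sqrtm{R}}^4$. Subtracting and grouping by $\alpha_1^2$, $\alpha_1\alpha_2$, $\alpha_2^2$ reproduces $\alpha_1^2 n_1 + 2\alpha_1\alpha_2 n_2 + \alpha_2^2 n_3$ with the stated $n_i$. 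Then in $\sqrt{\hat b_l - \hat a_l^2}/\hat a_l$ the common factor $1/m$ cancels between numerator and denominator, and substitution into Lemma~\ref{lem:condScaling:condLowerBound} gives the result. There is no genuine obstacle here — the only points requiring care are confirming that every $S$-dependent integral vanishes and matching the cross-terms of $\hat a_l^2$ against $m\hat b_l$ coefficient by coefficient.
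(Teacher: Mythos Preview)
Your proposal is correct and follows essentially the same route as the paper: start from the lower bound of Lemma~\ref{lem:condScaling:condLowerBound}, specialize to $S=0$ so that $I_1,\dots,I_5$ vanish, apply the scalings of Lemma~\ref{lem:scaling:prelim:normScaling}, expand, and collect terms in $\alpha_1,\alpha_2$. You are in fact slightly more careful than the paper in explicitly tracking the $1/m$ factor from Lemma~\ref{lem:scaling:prelim:limitingValues} and noting its cancellation in the ratio; the paper's proof silently drops it.
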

\begin{proof}
    See Appendix~\ref{app:scaledBoundProof}.
\end{proof}

A numerical example for the results in Theorem~\ref{thm:condScaling:scaledBound} is presented for System~\ref{sys:jonesMorari} in Figure~\ref{fig:scaledWeightMatrix}.
 Examining the behaviour of the bound, it can be seen that there exist three distinct regions (shown through shading in Figure~\ref{fig:scaledWeightMatrix:cond}): when $\alpha_1 \ll \alpha_2$, when $\alpha_1 \gg \alpha_2$, and the transition region.
 The lower bounds for the regions when $\alpha_1 \ll \alpha_2$ and $\alpha_1 \gg \alpha_2$ can be estimated through the following corollary.
\begin{corollary}
    \label{corr:weightScaling:asympBounds}
    The lower bound in Theorem~\ref{thm:condScaling:scaledBound} has asymptotic values
    \begin{equation*}
        \kappa(\hat{H}_c) \geq 
        \begin{cases}
        1 + 2 \frac{\sqrt{m I_{6} - \Htwonorm{G_{Q}}^4}}{\Htwonorm{G_{Q}}^2} & \text{if $\alpha_1 \gg \alpha_2$}\\ 
        1 + 2 \frac{\sqrt{m\Fronorm{R}^{2} - \Fronorm{\sqrtm{R}}^{4}}}{\Fronorm{\sqrtm{R}}^{2}} & \text{if $\alpha_1 \ll \alpha_2$} 
        \end{cases}
    \end{equation*}
\end{corollary}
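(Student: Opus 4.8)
The plan is to exploit the fact that the right-hand side of the bound in Theorem~\ref{thm:condScaling:scaledBound} is homogeneous of degree zero in the pair $(\alpha_1,\alpha_2)$, so its value depends only on the ratio $\alpha_1/\alpha_2$; the regime ``$\alpha_1 \gg \alpha_2$'' corresponds to this ratio tending to $+\infty$, and ``$\alpha_1 \ll \alpha_2$'' to it tending to $0^{+}$. Thus the corollary is just a pair of one-sided limits of the expression in Theorem~\ref{thm:condScaling:scaledBound}.

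First, for $\alpha_1 \gg \alpha_2$, I would set $t \coloneqq \alpha_2/\alpha_1 \in (0,\infty)$ and factor $\alpha_1$ out of both the square root in the numerator and the denominator of the fraction, obtaining
\begin{equation*}
\kappa(\hat{H}_c) \geq 1 + 2\,\frac{\sqrt{n_1 + 2 t\, n_2 + t^2 n_3}}{\Htwonorm{G_{Q}}^2 + t \Fronorm{\sqrtm{R}}^{2}}.
\end{equation*}
Both numerator and denominator are continuous in $t$ and the denominator satisfies $\Htwonorm{G_{Q}}^2 > 0$ (positive-definiteness of $H_c$ for all horizons forces the state to be penalised through $G_s$, hence $\Htwonorm{G_{Q}} > 0$ unless $Q=0$), so letting $t \to 0^{+}$ gives the limit $\sqrt{n_1}/\Htwonorm{G_{Q}}^2$. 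Substituting $n_1 = m I_{6} - \Htwonorm{G_{Q}}^4$ yields the first case.

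Second, for $\alpha_1 \ll \alpha_2$, I would perform the symmetric computation: set $s \coloneqq \alpha_1/\alpha_2 \in (0,\infty)$ and factor $\alpha_2$ out instead, giving the fraction $\sqrt{s^2 n_1 + 2 s\, n_2 + n_3}\,/\,(s\Htwonorm{G_{Q}}^2 + \Fronorm{\sqrtm{R}}^{2})$. Since $R \succ 0$ implies $\Fronorm{\sqrtm{R}}^{2} > 0$, letting $s \to 0^{+}$ produces the limit $\sqrt{n_3}/\Fronorm{\sqrtm{R}}^{2}$, and substituting $n_3 = m \Fronorm{R}^{2} - \Fronorm{\sqrtm{R}}^{4}$ gives the second case.

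There is no genuine obstacle; the only point deserving a moment's care is that the two limits are one-sided and finite, which rests on the strict positivity of the denominators $\Htwonorm{G_{Q}}^2$ and $\Fronorm{\sqrtm{R}}^{2}$ — both consequences of the standing positive-definiteness assumptions on the weight matrices. In the degenerate sub-cases $Q = 0$ or $R = 0$ the corresponding asymptotic bound is vacuous (then $n_1 = 0$ or $n_3 = 0$ as well, and $H_c$ fails to be positive definite), so these are excluded by hypothesis.
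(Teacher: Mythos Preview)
Your proposal is correct and is precisely the implicit argument behind the corollary: the paper does not supply a separate proof, treating the result as an immediate consequence of Theorem~\ref{thm:condScaling:scaledBound} obtained by letting the ratio $\alpha_1/\alpha_2$ tend to $\infty$ or $0$. Your homogeneity-of-degree-zero framing and the care about positivity of the denominators make the one-line passage explicit and rigorous.
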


When $\alpha_1 \ll \alpha_2$, the spectrum of $R$ dominates the condition number.
 The transition region is caused by the fact that $\lambda_{max}$ begins growing before $\lambda_{min}$ when $\alpha_1$ is scaled, causing their ratio to change.
 Then when $\lambda_{min}$ also begins growing, the ratio becomes constant leading to the region where $\alpha_1 \gg \alpha_2$.
 In this region, the condition number is dominated by the singular value distribution of the dynamical system with an output mapping through the weighting matrix $Q$.

The bounds in these regions are related to the spread and mean of the spectrum of the matrices/system.
 The quantity in the numerators of Corollary~\ref{corr:weightScaling:asympBounds} can be viewed as an upper bound on the spread of the spectrum (the largest distance between two eigenvalues) \cite{Mirsky1956_matrixSpread}, while the denominator can be viewed as the mean of the spectrum.
 We can use this relation to see that for large ratios of $R$ to $Q$, $\kappa$ is dominated by the spread of the spectrum of $R$ over its average.
 Alternatively, for large $Q$ to $R$ ratios the bound is dominated by the spread of the singular values of the physical system with an output compensator of $\sqrtm{Q}$ over the average of the singular values.
 
Another interesting phenomenon arises when both $Q$ and $R$ are scaled by the same amount.
 \begin{corollary}
    \label{corr:weightScaling:constantValue}
    If the relative scaling of the two weight matrices is held constant at $\alpha_1 = \eta \alpha_2$ for a constant $\eta > 0$, then the lower bound is constant with the value
    \begin{equation*}
        \kappa(\hat{H}_c) \geq 1 + 2 \frac{\sqrt{\eta^2 n_1 + 2\eta n_2 + n_3}}{ \eta \Htwonorm{G_{Q}}^2 + \Fronorm{\sqrtm{R}}^{2}}.
    \end{equation*}
 \end{corollary}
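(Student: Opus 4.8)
The plan is to substitute the relation $\alpha_1 = \eta\alpha_2$ directly into the lower bound of Theorem~\ref{thm:condScaling:scaledBound} and exploit its homogeneity: the common scale $\alpha_2$ factors out of both the numerator and the denominator, where it then cancels.

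Concretely, I would begin from
\[
    \kappa(\hat{H}_c) \geq 1 + 2 \frac{\sqrt{ \alpha_1^2 n_1 + 2\alpha_1 \alpha_2 n_2 + \alpha_2^2 n_3}}{\alpha_1 \Htwonorm{G_{Q}}^2 + \alpha_2 \Fronorm{\sqrtm{R}}^{2}}
\]
and replace every $\alpha_1$ by $\eta\alpha_2$. The radicand becomes $\alpha_2^2\bigl(\eta^2 n_1 + 2\eta n_2 + n_3\bigr)$ and the denominator becomes $\alpha_2\bigl(\eta\Htwonorm{G_Q}^2 + \Fronorm{\sqrtm{R}}^2\bigr)$. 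Since the scaled matrices must keep the quadratic cost positive definite, $\alpha_2 > 0$ (and $\eta > 0$ by hypothesis), so $\sqrt{\alpha_2^2} = \alpha_2 > 0$ and the common factor $\alpha_2$ cancels between numerator and denominator, leaving precisely the stated bound with no dependence on the common magnitude $\alpha_2$.

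The only items needing attention are bookkeeping: that $\eta^2 n_1 + 2\eta n_2 + n_3 \geq 0$, so the square root is real, and that $\alpha_2 \neq 0$. The former is inherited from the non-negativity of the radicand in Theorem~\ref{thm:condScaling:scaledBound} specialised to $\alpha_1 = \eta\alpha_2$, since $\alpha_2^2(\eta^2 n_1 + 2\eta n_2 + n_3)$ equals (up to the constant $m^2$) the variance-type quantity $\hat b_l - \hat a_l^2 \geq 0$ underlying Lemma~\ref{lem:condScaling:condLowerBound}. I do not expect any substantive obstacle here; the corollary is essentially the observation that the Theorem~\ref{thm:condScaling:scaledBound} bound is a function of the ratio $\alpha_1/\alpha_2$ alone, which matches the intuition that uniformly rescaling the entire cost function leaves the conditioning of the Hessian unchanged.
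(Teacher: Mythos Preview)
Your proposal is correct and is precisely the intended argument: the paper states this as an immediate corollary of Theorem~\ref{thm:condScaling:scaledBound} without a separate proof, and your substitution $\alpha_1=\eta\alpha_2$ followed by cancellation of the common factor $\alpha_2>0$ is exactly how one recovers the stated bound. The bookkeeping points you raise (positivity of $\alpha_2$ and non-negativity of the radicand) are handled just as you describe.
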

 
 Essentially, if both $Q$ and $R$ are scaled equally (e.g.\ $\alpha_1 = \alpha_2$), then the condition number of the Hessian matrix does not change.
 This is also true when $S \neq 0$ and is scaled equally with $Q$ and $R$ (e.g.\ $\alpha_1 = \alpha_2 = \alpha_3$), so it is only when the matrices are scaled separately that the condition number changes.

\subsection{Condition Number with Discretized Weights}
\label{sec:weightScaling:condDiscreteWeights}

The results inside Section~\ref{sec:weightScaling:cond} examine the effect of the scaling of the weights in the discrete-time problem~\eqref{eq:mpc:linMPC}.
 Alternatively, the weighting matrices can be generated from the continuous-time problem through a discretization procedure given in \cite{Bini2014_OptimalSamplingPattern}, where $\tau$ is the sampling time and
 \begin{align}
     \Phi(\tau) &\coloneqq e^{A\tau}, \quad \Gamma(\tau) \coloneqq \int_{0}^{\tau} e^{A(\tau - t)}dt B, \nonumber\\
     Q_{d}(\tau) &\coloneqq \int_{0}^{\tau} \ctrans{\Phi}(t) Q_c \Phi(t) dt, \label{eq:discWeights:Q}\\
     S_{d}(\tau) &\coloneqq \int_{0}^{\tau} \ctrans{\Phi}(t) Q_{c} \Gamma(t) dt, \label{eq:discWeights:S}\\
     R_{d}(\tau) &\coloneqq \tau R_{c} + \int_{0}^{\tau} \ctrans{\Gamma}(t) Q_{c} \Gamma(t) dt. \label{eq:discWeights:R}
 \end{align}
 Computation of the weights in this way will cause the discrete-time cost to be equivalent to the continuous-time cost.
 
Scaling the weighting matrices in the continuous-time problem by $\hat{Q_{c}} \coloneqq \alpha_{1} Q_{c}$ and $\hat{R_{c}} \coloneqq \alpha_{2} R_{c}$ then leads to the following scalings for the discrete-time matrices
 \begin{align*}
    \hat{Q_{d}} = \alpha_{1} Q_{d}, \quad
    \hat{S_{d}} = \alpha_{1} S_{d}, \quad 
    \hat{R_{d}} = \alpha_{1} R_{d} + (\alpha_{2} - \alpha_{1}) \tau R_{c}
 \end{align*}
 Note that scaling $\alpha_{1}$ affects all three weighting matrices, but $\alpha_2$ only affects $R_{d}$.

This scaling behaves very similarly to the scaling of the discrete-time matrices in Section~\ref{sec:weightScaling:cond}.
 When $\alpha_1 = \alpha_2$, all matrices are scaled equally and the results from Corollary~\ref{corr:weightScaling:constantValue} say that the condition number will not change.

\section{Computational Complexity}
\label{sec:compComplexity}

In this section, we examine the change in computational complexity for the DGP and FGM algorithms when applied to the CLQR problem with scaled weight matrices.
 We specifically focus on the relative scaling case, which means that $R$ is held constant and $Q$ is scaled by $\alpha_1 \in [ 10^{-4}, 10^{6}]$.

\subsection{Fast Gradient Method}

 
When the upper bounds for $\Delta$ and $\epsilon$ in~\eqref{eq:fgm:constBounds} are used in the UIB for the Fast Gradient Method, the UIB becomes dependent only on $\kappa$.
 Further simplification shows that $a \geq b > 0$ for $\kappa \geq 1$, which leads to
 \begin{equation}
    \label{eq:compComplex:fgm:simpleUIB}
    \text{UIB} = \ceil*{2\sqrt{\kappa} - 2}.
 \end{equation}
 The square-root dependence of~\eqref{eq:compComplex:fgm:simpleUIB} on $\kappa$ means that the UIB will follow the same general trend as the condition number.
 Corollary~\ref{corr:weightScaling:asympBounds} can then be used to investigate when the UIB for FGM will be small.
 For instance, Corollary~\ref{corr:weightScaling:asympBounds} suggests that $\kappa$ will be smaller if $\alpha_2 \gg \alpha_1$ (e.g.\ the inputs are more heavily weighted than the states) for System~\ref{sys:jonesMorari} .
 This will then lead to a smaller iteration bound for FGM when the inputs are more heavily weighted, which is seen in practice (see Figure~\ref{fig:alg:fgm:iter}).

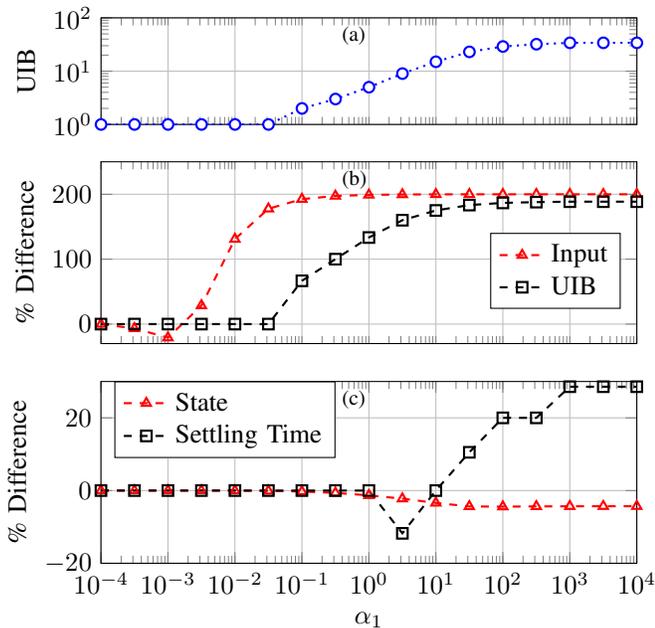
\begin{figure}[tb]
    \centering
    \begin{tikzpicture}
    \begin{groupplot}[group style = {group name=plots, group size = 1 by 4, horizontal sep=1.5cm, vertical sep=0.5cm}]
        \pgfplotstableread[col sep=comma]{figures/data/complexity_FGM_JonesMorari.csv}{\JMdata}

%
       
         \nextgroupplot[xmin   = 0.0001,
                        xmax   = 10000,
                        ymin   = 1,
                        ymax   = 100,
                        xmajorticks = false,
                        xtick distance = 10,
                        grid = major,
                        ylabel = {UIB},
                        height = 3cm,
                        width  = 0.48\textwidth,
                        ymode  = log,
                        xmode  = log,
                        legend pos = north west,
                        legend cell align=left]
    
            \addplot[dotted, thick, blue, every mark/.append style={solid, fill=white}, mark = *] table [x=Scaling, y=UIB] {\JMdata};
            
        \nextgroupplot[xmin   = 0.0001,
                       xmax   = 10000,
                       ymin   = -30,
                       ymax   = 250,
                       xmajorticks = false,
                       xtick distance = 10,
                       grid = major,
                       ylabel = {\% Difference},
                       height = 4cm,
                       width  = 0.48\textwidth,
                       xmode  = log,
                       legend style={at={(0.97,0.4)},anchor=east},
                       legend cell align=left]
            
            \addplot[dashed, thick, red, every mark/.append style={solid, fill=white}, mark = triangle] table [x=Scaling, y=pdiffInput] {\JMdata};
            \addlegendentry{Input}
            
            \addplot[dashed, thick, every mark/.append style={solid, fill=white}, mark = square] table [x=Scaling, y=pdiffIter] {\JMdata};
            \addlegendentry{UIB}
            
        \nextgroupplot[xmin   = 0.0001,
                       xmax   = 10000,
                       ymin   = -20,
                       ymax   = 30,
                       xmajorticks = true,
                       xtick distance = 10,
                       grid = major,
                       xlabel = {$\alpha_1$},
                       ylabel = {\% Difference},
                       height = 4cm,
                       width  = 0.48\textwidth,
                       xmode  = log,
                       legend style={at={(0.025,0.79)},anchor=west},
                       legend cell align=left]
            
            \addplot[dashed, thick, red, every mark/.append style={solid, fill=white}, mark = triangle] table [x=Scaling, y=pdiffState] {\JMdata};
            \addlegendentry{State}
            
            \addplot[dashed, thick, every mark/.append style={solid, fill=white}, mark = square] table [x=Scaling, y=pdiffSettling] {\JMdata};
            \addlegendentry{Settling Time}

    \end{groupplot}

    \node [text width=1em,anchor=center] at ($(plots c1r1.north)-(0.5em,0.0em)$) {\subfloat[]{\label{fig:alg:fgm:iter}}};
    \node [text width=1em,anchor=center] at ($(plots c1r2.north)-(0.5em,0.0em)$) {\subfloat[]{\label{fig:alg:fgm:pdiff1}}};
    \node [text width=1em,anchor=center] at ($(plots c1r3.north)-(0.5em,0.0em)$) {\subfloat[]{\label{fig:alg:fgm:pdiff2}}};
    \end{tikzpicture}
    
    \caption{Effect of scaling the weight matrix $Q$ by $\alpha_1$ and holding $R$ constant on the Fast Gradient Method when solving System~\ref{sys:jonesMorari} with no state constraints.}
    \label{fig:alg:fgm}

\end{figure}

The performance of the CLQR controller using FGM was examined by controlling System~\ref{sys:jonesMorari} without the state constraints and with a chosen suboptimality level of $\delta_{max} = 0.001$ and $N=20$.
 The CLQR regulates System~\ref{sys:jonesMorari} starting from the initial condition of 
 $x_{0} = \trans{[ 0.1, 0.1, 0.1, 0.1 ]}$
 to the origin.
 The performance was measured by taking the 2-norm of the state and input trajectories.
 To compare the performance at each scaling factor, the percent difference versus $\alpha_{1} = 10^{-4}$ was computed, and is shown in Figures~\ref{fig:alg:fgm:pdiff1} and~\ref{fig:alg:fgm:pdiff2}.
  
For this problem, the overall change in the 2-norm of the state trajectories across the entire scaling range was less than 5\%, while the input norm varied by approximately 200\%.
 Additionally, the number of iterations required varied by 188\% across the scaling range.
 An interesting feature of this is that the change in the norm of the input occurs at a different time than the change in the UIB.
 These results show that if the performance of the system states was the design criteria, an aggressive weighting will only produce a 5\% decrease in the norm of the state trajectories, but will produce a 188\% increase in the number of iterations required for the solver.

\subsection{Dual Gradient Projection}

\begin{figure}[tb]
    \def\Nmin{0.0001}
    \def\Nmax{1000000}
    \centering
    
    \begin{tikzpicture}
    \begin{groupplot}[group style = {group name=plots, group size = 1 by 5, horizontal sep=1.5cm, vertical sep=0.5cm}]
        \pgfplotstableread[col sep=comma]{figures/data/complexity_DGP_JonesMorari.csv}{\JMdata}

         \nextgroupplot[xmin   = \Nmin,
                        xmax   = \Nmax,
                        ymin   = 1e-6,
                        ymax   = 10,
                        xmajorticks = false,
                        xtick distance = 10,
                        grid = major,
                        ylabel = {$\lambda_{max}(H_{d})$},
                        height = 3.75cm,
                        width  = 0.48\textwidth,
                        ymode  = log,
                        xmode  = log,
                        legend pos=south west]
        
            \addplot[dashed, thick, red, every mark/.append style={solid, fill=white}, mark = triangle] table [x=Scaling, y=asymp_maxE_d] {\JMdata};
            \addlegendentry{Theorem~\ref{thm:dual:toep:eigBounds}}
            
            \addplot[dotted, thick, blue, every mark/.append style={solid, fill=white}, mark = square] table [x=Scaling, y=actual_maxE_d] {\JMdata};
            \addlegendentry{Actual}
            
            \addplot[dotted, thick, black, every mark/.append style={solid, fill=white}, mark = diamond] table [x=Scaling, y=subMult_maxE_d] {\JMdata};
            \addlegendentry{\cite{Patrinos2013_DGP}}

         \nextgroupplot[xmin   = \Nmin,
                        xmax   = \Nmax,
                        ymin   = 50,
                        ymax   = 300,
                        xmajorticks = false,
                        xtick distance = 10,
                        grid = major,
                        ylabel = {UDB},
                        height = 3cm,
                        width  = 0.48\textwidth,
                        xmode  = log]
         
             \addplot[dotted, thick, blue, every mark/.append style={solid, fill=white}, mark = *] table [x=Scaling, y=UDB] {\JMdata};
         
         \nextgroupplot[xmin   = \Nmin,
                        xmax   = \Nmax,
                        ymin   = 1000,
                        ymax   = 1e10,
                        xmajorticks = false,
                        xtick distance = 10,
                        grid = major,
                        ylabel = {UIB},
                        height = 3.75cm,
                        width  = 0.48\textwidth,
                        ymode  = log,
                        xmode  = log,
                        legend style={at={(0.01,0.35)},anchor=west},
                        legend cell align=left]
    
            \addplot[dotted, thick, blue, every mark/.append style={solid, fill=white}, mark = *] table [x=Scaling, y=UIB] {\JMdata};
            \addlegendentry{Theorem~\ref{thm:dual:toep:eigBounds}}

            \addplot[dashed, thick, black, every mark/.append style={solid, fill=white}, mark = diamond] table [x=Scaling, y=UIB_subMult] {\JMdata};
            \addlegendentry{$\lambda_{max}$ from \cite{Patrinos2013_DGP}}
            
            
        \nextgroupplot[xmin   = \Nmin,
                       xmax   = \Nmax,
                       ymin   = -200,
                       ymax   = 250,
                       xmajorticks = false,
                       xtick distance = 10,
                       grid = major,
                       ylabel = {\% Difference},
                       height = 3.75cm,
                       width  = 0.48\textwidth,
                       xmode  = log,
                       legend style={at={(1.0,0.60)},anchor=east},
                       legend cell align=left]
            
            \addplot[dashed, thick, red, every mark/.append style={solid, fill=white}, mark = triangle] table [x=Scaling, y=pdiffInput] {\JMdata};
            \addlegendentry{Input}
            
            \addplot[dashed, thick, every mark/.append style={solid, fill=white}, mark = square] table [x=Scaling, y=pdiffIter] {\JMdata};
            \addlegendentry{UIB}
            
        \nextgroupplot[xmin   = \Nmin,
                       xmax   = \Nmax,
                       ymin   = -20,
                       ymax   = 30,
                       xmajorticks = true,
                       xtick distance = 10,
                       grid = major,
                       xlabel = {$\alpha_1$},
                       ylabel = {\% Difference},
                       height = 4cm,
                       width  = 0.48\textwidth,
                       xmode  = log,
                       xticklabel style={rotate=270},
                       legend style={at={(0.025,0.79)},anchor=west},
                       legend cell align=left]
            
            \addplot[dashed, thick, red, every mark/.append style={solid, fill=white}, mark = triangle] table [x=Scaling, y=pdiffState] {\JMdata};
            \addlegendentry{State}
            
            \addplot[dashed, thick, every mark/.append style={solid, fill=white}, mark = square] table [x=Scaling, y=pdiffSettling] {\JMdata};
            \addlegendentry{Settling Time}

    \end{groupplot}

    \node [text width=1em,anchor=center] at ($(plots c1r1.north)-(0.5em,0.6em)$) {\subfloat[]{\label{fig:alg:dgp:sys1:eps}}};
    \node [text width=1em,anchor=center] at ($(plots c1r2.north)-(0.5em,0.0em)$) {\subfloat[]{\label{fig:alg:dgp:sys1:udb}}};
    \node [text width=1em,anchor=center] at ($(plots c1r3.north)-(0.5em,0.0em)$) {\subfloat[]{\label{fig:alg:dgp:sys1:iter}}};
    \node [text width=1em,anchor=center] at ($(plots c1r4.north)-(0.5em,0.6em)$) {\subfloat[]{\label{fig:alg:dgp:sys1:pdiff1}}};
    \node [text width=1em,anchor=center] at ($(plots c1r5.north)-(0.5em,0.0em)$) {\subfloat[]{\label{fig:alg:dgp:sys1:pdiff2}}};
    \end{tikzpicture}
    
    \caption{Effect of scaling the weight matrix $Q$ by $\alpha_1$ and holding $R$ constant on the Dual Gradient Projection method when solving System~\ref{sys:jonesMorari} with no state constraints.}
    \label{fig:alg:dgp:sys1}

\end{figure}
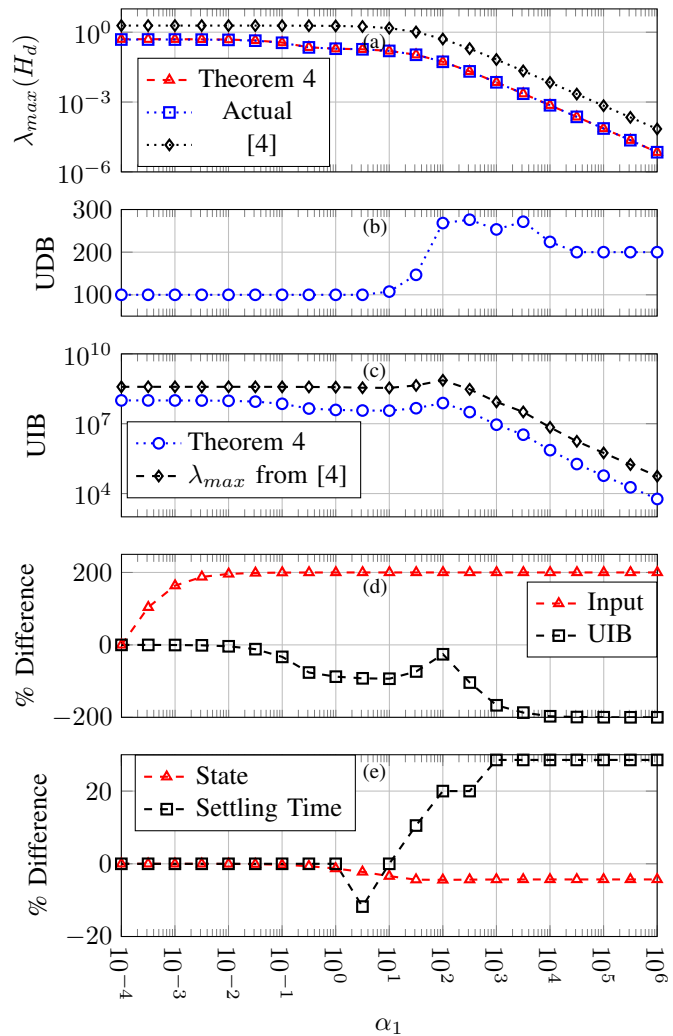

\begin{figure}[tb]
    \centering
    
    \def\Nmin{0.0001}
    \def\Nmax{1000000}
    
    \begin{tikzpicture}
    \begin{groupplot}[group style = {group name=plots, group size = 1 by 4, horizontal sep=1.5cm, vertical sep=0.5cm}]
        \pgfplotstableread[col sep=comma]{figures/data/complexity_DGP_MassSpring.csv}{\MSdata}

         \nextgroupplot[xmin   = \Nmin,
                        xmax   = \Nmax,
                        ymin   = 1e-5,
                        ymax   = 1e10,
                        xmajorticks = false,
                        xtick distance = 10,
                        grid = major,
                        ylabel = {$\lambda_{max}(H_{d})$},
                        height = 3.25cm,
                        width  = 0.48\textwidth,
                        ymode  = log,
                        xmode  = log]
        
            \addplot[dashed, thick, red, every mark/.append style={solid, fill=white}, mark = triangle] table [x=Scaling, y=asymp_maxE_d] {\MSdata};
            \addlegendentry{Proposition~\ref{prop:dense:dual:arbMatrixUB}}

            \addplot[dotted, thick, blue, every mark/.append style={solid, fill=white}, mark = square] table [x=Scaling, y=actual_maxE_d] {\MSdata};
            \addlegendentry{Actual}
            
         \nextgroupplot[xmin   = \Nmin,
                        xmax   = \Nmax,
                        ymin   = 10000000,
                        ymax   = 1e14,
                        xmajorticks = false,
                        xtick distance = 10,
                        grid = major,
                        ylabel = {UIB},
                        height = 3cm,
                        width  = 0.48\textwidth,
                        ymode  = log,
                        xmode  = log,
                        legend pos = north west,
                        legend cell align=left]
    
            \addplot[dotted, thick, blue, every mark/.append style={solid, fill=white}, mark = *] table [x=Scaling, y=UIB] {\MSdata};


        \nextgroupplot[xmin   = \Nmin,
                       xmax   = \Nmax,
                       ymin   = -250,
                       ymax   = 250,
                       xmajorticks = false,
                       xtick distance = 10,
                       grid = major,
                       ylabel = {\% Difference},
                       height = 4cm,
                       width  = 0.48\textwidth,
                       xmode  = log,
                       legend style={at={(0.97,0.5)},anchor=east},
                       legend cell align=left]
            
            \addplot[dashed, thick, red, every mark/.append style={solid, fill=white}, mark = triangle] table [x=Scaling, y=pdiffInput] {\MSdata};
            \addlegendentry{Input}
            
            \addplot[dashed, thick, every mark/.append style={solid, fill=white}, mark = square] table [x=Scaling, y=pdiffIter] {\MSdata};
            \addlegendentry{UIB}
            
        \nextgroupplot[xmin   = \Nmin,
                       xmax   = \Nmax,
                       ymin   = -200,
                       ymax   = 0,
                       xmajorticks = true,
                       xtick distance = 10,
                       grid = major,
                       xlabel = {$\alpha_1$},
                       ylabel = {\% Difference},
                       height = 4cm,
                       width  = 0.48\textwidth,
                       xmode  = log,
                       xticklabel style={rotate=270},
                       legend style={at={(1.0,0.79)},anchor=east},
                       legend cell align=left]
            
            \addplot[dashed, thick, red, every mark/.append style={solid, fill=white}, mark = triangle] table [x=Scaling, y=pdiffState] {\MSdata};
            \addlegendentry{State}
            
            \addplot[dashed, thick, every mark/.append style={solid, fill=white}, mark = square] table [x=Scaling, y=pdiffSettling] {\MSdata};
            \addlegendentry{Settling Time}

    \end{groupplot}

    \node [text width=1em,anchor=center] at ($(plots c1r1.north)-(0.5em,0.0em)$) {\subfloat[]{\label{fig:alg:dgp:sys2:eps}}};
    \node [text width=1em,anchor=center] at ($(plots c1r2.north)-(0.5em,0.0em)$) {\subfloat[]{\label{fig:alg:dgp:sys2:iter}}};
    \node [text width=1em,anchor=center] at ($(plots c1r3.north)-(0.5em,0.0em)$) {\subfloat[]{\label{fig:alg:dgp:sys2:pdiff1}}};
    \node [text width=1em,anchor=center] at ($(plots c1r4.north)-(0.5em,0.0em)$) {\subfloat[]{\label{fig:alg:dgp:sys2:pdiff2}}};
    \end{tikzpicture}
    
    \caption{Effect of scaling the continuous-time weight matrix $Q_{c}$ by $\alpha_1$ and holding $R_{c}$ constant on the Dual Gradient Projection method when solving System~\ref{sys:msd}.}
    \label{fig:alg:dgp:sys2}

\end{figure}
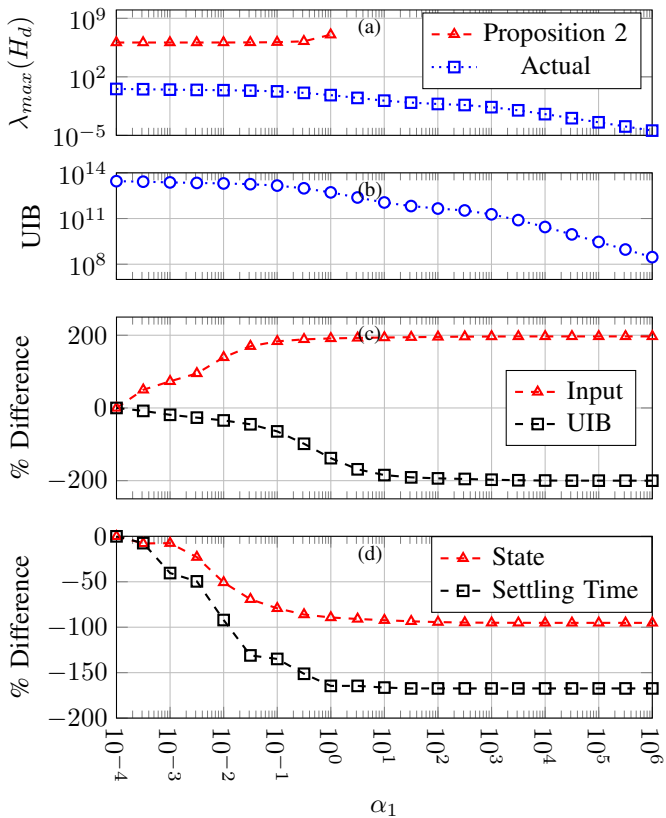

The Upper Iteration Bound for DGP given in \eqref{eq:dgp:uib} is dependent on the largest eigenvalue of both the primal and dual Hessian matrices, as well as the Upper Dual Bound.
 All three of these quantities are dependent upon the horizon length chosen; but while a horizon-independent bound on the UDB is not known, the eigenvalues can be bounded using the results in Section~\ref{sec:specRes}.
 Examining \eqref{eq:dgp:uib}, $\lambda_{max}$ of the primal Hessian only affects the UIB when a suboptimal solution to the dual problem is requested (e.g.\ $\epsilon_{z} \neq 0$), while $\lambda_{max}$ of the dual Hessian has a linear effect and the UDB has a quadratic effect on the UIB.

The numerical examples presented in Figures~\ref{fig:alg:dgp:sys1} and~\ref{fig:alg:dgp:sys2} use $N=20$ with $\epsilon_{z}=\epsilon_{\xi} = 0$ (e.g.\ solve the dual exactly and with no computation error) while $\epsilon_{g} = 10^{-4}$ and $\epsilon_{V} = 10^{-2}$.
 The UDB for System~\ref{sys:jonesMorari} shown in Figure~\ref{fig:alg:dgp:sys1:udb} is calculated by solving the MILP given in ~\cite{Necoara2014} using CPLEX with indicator constraints to implement the binary variables.
 The UDB for System~\ref{sys:msd} was upper bounded as $D{=}20000$ for all scaling factors by solving the same MILP, but terminating computation early and using the best objective value as the upper bound.
 As shown in Figure~\ref{fig:alg:dgp:sys1:iter}, the tight bounds from Theorem~\ref{thm:dual:toep:eigBounds} decrease the UIB by an order of magnitude compared with the estimate of $\lambda_{max}$ given in \cite{Patrinos2013_DGP}.

The UIB for DGP is affected by both the UDB and the maximal eigenvalues as the weight matrices are scaled.
 An interesting feature that appears when the exact UDB is used in System~\ref{sys:jonesMorari} is the interplay between $D$ and $\lambda_{max}(H_{d})$; specifically the slight peak in the UIB around $\alpha_{1} = 100$ before it drops off again.
 The UDB though appears to have an asymptotic structure at the two extremes for $\alpha_{1}$, while $\lambda_{max}(H_{d})$ is decreasing as $\alpha_{1}$ increases.
 A tradeoff in the closed-loop performance and computational complexity is also evident for DGP, with System~\ref{sys:jonesMorari} showing a 200\% increase in the UIB for a 200\% decrease in the input norm and 5\% increase in the state norm.

\section{Preconditioning}
\label{sec:preconditioning}

The spectral results presented in Section~\ref{sec:specRes:condHess} can be readily extended to analyze the case of a preconditioned Hessian matrix, and also to help design new preconditioners.

\subsection{Analysis of the Preconditioned Hessian}

For simplicity of description, we focus on the case when $H_{c}$ is symmetrically preconditioned as $L_{n}^{-1} H_{c} \trans{(L_{n}^{-1})}$ with a block-diagonal preconditioner $L_{n}$, thus guaranteeing that the preconditioned matrix is Toeplitz.
 This case is fairly standard in the MPC literature for first-order methods, since it guarantees that the structure of the feasible set is preserved over the preconditioning operation and that the preconditioned Hessian matrix is symmetric \cite{Richter2012}.
 Results can be derived for non block-diagonal preconditioners using \cite[Theorem~4.3]{Miranda2000} with $M^{-1}H_{c}$ where $M \coloneqq L_{n} \trans{L_{n}}$, but the handling of the cross-term matrix $S$ may not be as straightforward.
 

 
Since the preconditioner matrix $L_{n}$ is block-diagonal, its matrix symbol is simply $L$.
 The results in Section~\ref{sec:specRes:condHess} can then be extended to the preconditioned matrix by simply replacing $P_{H_{cQ}}$ in Theorems~\ref{thm:dense:PQ:hessEig} and~\ref{thm:dense:PLyap:hessEig} with $P_{H_{L}}$ given by
 \begin{equation}
     \label{eq:precond:toepSymbol}
     P_{H_{L}} \coloneqq \bar{L} P_{H_{cQ}} \trans{\bar{L}},
 \end{equation}
 where $\bar{L} \coloneqq L^{-1}$.
 The results in Theorem~\ref{thm:dense:Sterm:hessEig} can likewise be extended to the preconditioned case by redefining $P_{H_{n}}$ and $U$ as
 $ P_{H_{n}} \coloneqq \bar{L} ( P_{H_{cQ}} + P_{\bar{H}_{S}} ) \trans{\bar{L}} $
 and
 $
     U \coloneqq
     \begin{bmatrix}
     \trans{\bar{L}} \bar{L} \trans{B} S & \trans{\bar{L}} \bar{L} \\
     \trans{S} \bar{W}_{c} S & \trans{S} B \trans{\bar{L}} \bar{L}
     \end{bmatrix}
 $
 respectively, with $\bar{W}_{c}$ the controllability Gramian of the system with the input matrix $B \trans{\bar{L}}$.

\subsection{Preconditioner Design}

The Toeplitz structure of the Hessian matrix can also be exploited to design preconditioners for the primal problem.
 There is a rich literature of preconditioners for Toeplitz and circulant matrices, with a focus on designing the preconditioners independent of the size of the matrix (see \cite{Chan2007} and references therein).

For example, \cite{Chan1988} proposes an optimal circulant preconditioner for Toeplitz matrices that can be designed using only closed-form expressions.
 This can be used in designing a diagonal preconditioner for $H_{c}$, which is given in Corollary~\ref{corr:precond:circ}.
\begin{corollary}
    \label{corr:precond:circ}
    Let $H_{c}$ be the condensed primal Hessian matrix from Section~\ref{sec:specRes:condHess} and $P$ the solution to the discrete-time Lyapunov equation $A^{T} P A + Q = P$.
    The matrix $H_{c}$ can be symmetrically preconditioned as $L^{-1} H_{c} \trans{(L^{-1})}$, where $L$ is the lower-triangular Cholesky decomposition of $M$ and 
    \begin{equation*}
        M \coloneqq \trans{B} P B + \trans{S} B + \trans{B} S + R.
    \end{equation*}
\end{corollary}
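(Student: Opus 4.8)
The plan is to identify the block $M$ as the horizon-independent, closed-form block-diagonal preconditioner obtained from the Toeplitz structure of the Hessian, and then verify by direct computation that this block equals $\trans{B} P B + \trans{S} B + \trans{B} S + R$. First I would recall that, with $P$ the solution of $\trans{A} P A + Q = P$, the matrix one wishes to precondition is (asymptotically) Toeplitz: when $S = 0$, Theorem~\ref{thm:dense:PLyap:hessEig} gives $H_{cP}$ Toeplitz with symbol $P_{H_{cQ}}$ (Lemma~\ref{lem:HqSymbol}); when $S \neq 0$, the nominal matrix $H_{n} = H_{cQ} + \bar{H}_{S}$ of Case~\ref{case:withS} is Toeplitz with symbol $P_{H_{n}}$ in \eqref{eq:spec:sTerm:symHn}, and $H_{cS}$ differs from $H_{n}$ only by the finite-rank correction $H_{e}$. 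Restricting the symmetric preconditioner $L_{n}$ to the block-diagonal form $I_{N} \otimes L$ — which is exactly what keeps the preconditioned matrix Toeplitz and the feasible set unchanged — the Frobenius-optimal choice of $M := L\trans{L}$ for approximating a block-Toeplitz matrix is its constant main-diagonal block (equivalently, the $z$-independent Fourier coefficient of the symbol), since $\|I_{N}\otimes M - T_{n}\|_{F}^{2} = N\|M - t_{0}\|_{F}^{2} + \text{const}$; this is the block-diagonal analogue of Chan's optimal circulant preconditioner \cite{Chan1988}.

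Second I would compute that limiting main-diagonal block. From \eqref{mat:equal:HQ}, the $(1,1)$ block of $\bar{H}_{Q}$ is $\sum_{i=0}^{N-1} \trans{B}\trans{(A^{i})} Q A^{i} B$, which converges as $N \to \infty$ to $\trans{B} P B$, because the Lyapunov solution admits the expansion $P = \sum_{i=0}^{\infty} \trans{(A^{i})} Q A^{i}$ under Schur-stability (as used in the proof of Theorem~\ref{thm:dense:PLyap:hessEig}). Adding the $(1,1)$ block $R$ of $H_{R} = I_{N}\otimes R$, together with the $(1,1)$ block of $\bar{H}_{S} = \trans{(I_{N}\otimes S)}\Gamma + \trans{\Gamma}(I_{N}\otimes S)$, which is $\trans{S} B + \trans{B} S$ since the leading diagonal block of $\Gamma$ is $A^{0}B = B$ (proof of Lemma~\ref{lem:gammaSymbol}), yields $\trans{B} P B + R + \trans{S} B + \trans{B} S = M$. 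Equivalently, $M$ can be read off as the $z$-independent Fourier coefficient of $P_{H_{cQ}}(z) = \ctrans{P_{\Gamma}(z)} Q P_{\Gamma}(z) + R$ (resp.\ of $P_{H_{n}}$), using $P_{\Gamma}(z) = \sum_{i=0}^{\infty} A^{i} B z^{-i}$ from Lemma~\ref{lem:gammaSymbol}.

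Third I would check that $M$ is positive definite so that the lower-triangular Cholesky factor $L$ with $M = L\trans{L}$ exists. For any $u \neq 0$, write $x := Bu$; since $P \succeq Q$ (immediate from $P = Q + \trans{A} P A$ with $P \succeq 0$),
\begin{equation*}
    \trans{u} M u = \trans{x} P x + 2\trans{x} S u + \trans{u} R u \;\geq\; \trans{\begin{bmatrix} x \\ u \end{bmatrix}} \begin{bmatrix} Q & S \\ \trans{S} & R \end{bmatrix} \begin{bmatrix} x \\ u \end{bmatrix} > 0,
\end{equation*}
because $\begin{bmatrix} Q & S \\ \trans{S} & R \end{bmatrix}$ is positive definite and $u \neq 0$. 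Hence $M \succ 0$, the Cholesky factorization $M = L\trans{L}$ with $L$ lower-triangular exists, and setting $L_{n} := I_{N}\otimes L$ the symmetric preconditioning $L_{n}^{-1} H_{c} \trans{(L_{n}^{-1})}$ is well-defined, preserves the block structure, and (when $H_{c}$ is Toeplitz) yields symbol $L^{-1} P_{H_{cQ}} \trans{(L^{-1})}$ as in \eqref{eq:precond:toepSymbol}; this is the assertion.

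The step I expect to be the main obstacle is the first one: making precise in what sense this $L$ is a principled, horizon-independent preconditioner and connecting the Chan-circulant motivation to the exact formula for $M$ — in particular, arguing that among block-diagonal symmetric preconditioners the constant main-diagonal block is Frobenius-optimal, and cleanly accommodating the non-Toeplitz correction $H_{e}$ that a nonzero $S$ introduces into $H_{cS}$. The remaining work (evaluating the limiting main-diagonal block and verifying $M \succ 0$) is routine once that framing is in place.
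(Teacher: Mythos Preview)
Your proposal is correct and follows the same line the paper gestures at: the paper states the corollary without proof, merely pointing to Chan's closed-form optimal preconditioner \cite{Chan1988} as the motivation for taking the (asymptotic) main-diagonal block of the Toeplitz Hessian. Your three steps --- identifying $M$ as the Frobenius-optimal constant block among $I_N\otimes M$ preconditioners, computing that block via the Lyapunov series $P=\sum_{i\ge 0}\trans{(A^i)}QA^i$ to obtain $\trans{B}PB+\trans{S}B+\trans{B}S+R$, and verifying $M\succ 0$ so the Cholesky factor exists --- fill in exactly the details the paper omits.
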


\begin{figure}[tb]
    \centering
    \pgfplotsset{actual/.style={dashed, thick, blue, every mark/.append style={solid, fill=white}, mark = *}}
    \pgfplotsset{richter/.style={dotted, thick, red, every mark/.append style={solid, fill=white}, mark = triangle}}
    \pgfplotsset{chan/.style={dashdotted, thick, black, every mark/.append style={solid, fill=white}, mark = square}}

    \begin{tikzpicture}
        \pgfplotstableread[col sep=comma]{figures/data/preconditioning_spectrum_JonesMorari.csv}{\preconddata}
        
        \begin{axis}[xmin   = 0,
                     xmax   = 40,
                     ymin   = 1,
                     ymax   = 10,
                     grid = major,
                     xlabel = {$N$},
                     ylabel = {$\kappa(H_L)$},
                     ylabel shift = -5 pt,
                     height = 4cm,
                     width  = 0.5\textwidth,
                     legend style={cells={align=left},/tikz/every even column/.append style={column sep=0.5cm}},
                     legend columns=2,
                     legend to name=precondSpectrumPlotleg,
                     title = {\pgfplotslegendfromname{precondSpectrumPlotleg}}]
                
                \addplot[richter] table [x=Horizon, y=Precond_O_K] {\preconddata};
                \addlegendentry{Optimal \cite{Richter2012}}
                
                \addplot[actual] table [x=Horizon, y=ActualK] {\preconddata};
                \addlegendentry{Un-preconditioned}
                
                \addplot[chan] table [x=Horizon, y=Precond_C_K] {\preconddata};
                \addlegendentry{Corollary~\ref{corr:precond:circ}}        
        \end{axis}
    \end{tikzpicture}
    
    \caption{The effect of preconditioning on the condition number of the condensed primal Hessian matrix for System~\ref{sys:jonesMorari}.}
    \label{fig:precond:spec}
\end{figure}
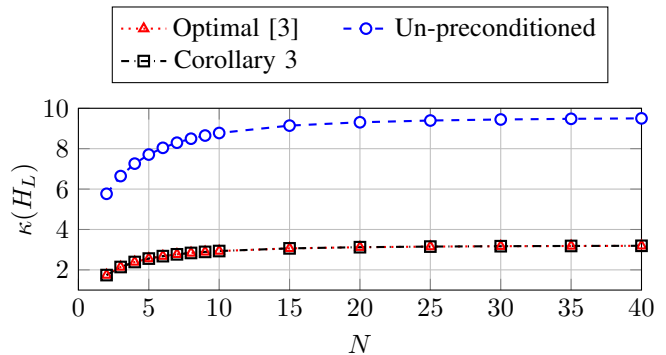

The block-diagonal preconditioner proposed in Corollary~\ref{corr:precond:circ} is independent of the horizon length, and is computable for any Schur stable system.
 The performance is also similar to that of the optimal preconditioner given in \cite{Richter2012}, as shown in Figure~\ref{fig:precond:spec}.
 Note that the optimal preconditioner must be recalculated at each horizon but the preconditioner in Corollary~\ref{corr:precond:circ} does not need to be.
 While the condition number of $H_{L}$ is the same for both preconditioners, the actual eigenvalue distribution is different.
 Corollary~\ref{corr:precond:circ} produces a lower minimum and maximum eigenvalue than the optimal preconditioner, which holds the lower eigenvalue constant at~1.
 This effect is most noticeable when the $Q$ matrix dominates the Hessian, as shown in Figures~\ref{fig:precond:scale:specMin} and~\ref{fig:precond:scale:specMax}.
 
Overall, the spectrum of the preconditioned matrix $H_{L}$ has the same behavior as the spectrum of $H_{c}$ when the weighting matrices are scaled.
 The main difference being that the condition number in the $Q$ dominating region is smaller when a preconditioner is used, as shown in Figure~\ref{fig:precond:scale:cond}.
 An interesting thing to note is that the optimal preconditioner from \cite{Richter2012} actually becomes incalculable for large ratios of $Q$ to $R$ in System~\ref{sys:jonesMorari}.
 The example suggests that for $\beta$ above 400, the optimization problem in \cite{Richter2012} becomes infeasible but that the proposed preconditioner in Corollary~\ref{corr:precond:circ} is still calculable.

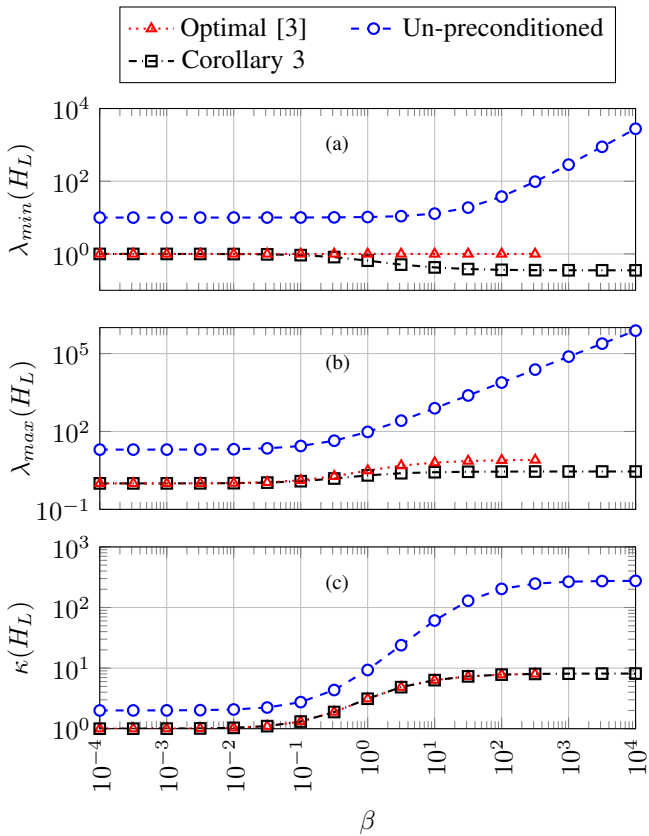
\begin{figure}[tb]
    \centering
    \pgfplotsset{actual/.style={dashed, thick, blue, every mark/.append style={solid, fill=white}, mark = *}}
    \pgfplotsset{richter/.style={dotted, thick, red, every mark/.append style={solid, fill=white}, mark = triangle}}
    \pgfplotsset{chan/.style={dashdotted, thick, black, every mark/.append style={solid, fill=white}, mark = square}}

    \begin{tikzpicture}
        \pgfplotstableread[col sep=comma]{figures/data/preconditioning_WeightScaling_JonesMorari.csv}{\preconddata}
        
        \begin{groupplot}[group style = {group name=plots, group size = 1 by 3, horizontal sep=0cm, vertical sep=0.5cm}]
            
            \nextgroupplot[xmin   = 0.0001,
                           xmax   = 10000,
                           ymin   = 0.1,
                           ymax   = 10000,
                           xmajorticks = false,
                           xtick distance = 10,
                           grid = major,
                           ylabel = {$\lambda_{min}(H_L)$},
                           height = 4cm,
                           width  = 0.48\textwidth,
                           ymode  = log,
                           xmode  = log,
                           title = {\pgfplotslegendfromname{precondScalingPlotleg}}]
                 
                \addplot[actual] table [x=Scaling, y=minE_actual] {\preconddata};
                
                \addplot[chan] table [x=Scaling, y=minE_C_precond] {\preconddata};
                
                \addplot[richter] table [x=Scaling, y=minE_O_precond] {\preconddata};

            \nextgroupplot[xmin   = 0.0001,
                           xmax   = 10000,
                           ymin   = 0.1,
                           ymax   = 1000000,
                           xmajorticks = false,
                           xtick distance = 10,
                           grid = major,
                           ylabel = {$\lambda_{max}(H_L)$},
                           ylabel shift = -7 pt,
                           height = 4cm,
                           width  = 0.48\textwidth,
                           ymode  = log,
                           xmode  = log]

                \addplot[actual] table [x=Scaling, y=maxE_actual] {\preconddata};
                
                \addplot[chan] table [x=Scaling, y=maxE_C_precond] {\preconddata};
                
                \addplot[richter] table [x=Scaling, y=maxE_O_precond] {\preconddata};

            \nextgroupplot[xmin   = 0.0001,
                           xmax   = 10000,
                           ymin   = 1,
                           ymax   = 1000,
                           grid = major,
                           xlabel = {$\beta$},
                           ylabel = {$\kappa(H_L)$},
                           x tick label style={rotate=90,anchor=east},
                           xtick distance = 10,
                           height = 4cm,
                           width  = 0.48\textwidth,
                           ymode  = log,
                           xmode  = log,
                           legend style={cells={align=left},/tikz/every even column/.append style={column sep=0.5cm}},
                           legend columns=2,
                           legend to name=precondScalingPlotleg]
                
                \addplot[richter] table [x=Scaling, y=Precond_O_K] {\preconddata};
                \addlegendentry{Optimal \cite{Richter2012}}
                
                \addplot[actual] table [x=Scaling, y=ActualK] {\preconddata};
                \addlegendentry{Un-preconditioned}
                
                \addplot[chan] table [x=Scaling, y=Precond_C_K] {\preconddata};
                \addlegendentry{Corollary~\ref{corr:precond:circ}}

        \end{groupplot}
        \node [text width=1em,anchor=center] at ($(plots c1r1.north)-(1.1em,0.7em)$) {\subfloat[]{\label{fig:precond:scale:specMin}}};
        \node [text width=1em,anchor=center] at ($(plots c1r2.north)-(1.1em,0.7em)$) {\subfloat[]{\label{fig:precond:scale:specMax}}};
        \node [text width=1em,anchor=center] at ($(plots c1r3.north)-(1.1em,0.7em)$) {\subfloat[]{\label{fig:precond:scale:cond}}};
    \end{tikzpicture}
    
    \caption{The effect of two different preconditoners on the extremal eigenvalues and the condition number of the condensed primal Hessian matrix $H_{c}$ when the $Q$ and $R$ matrices are scaled by $\beta$ in System~\ref{sys:jonesMorari}.}
    \label{fig:precond:scale}
\end{figure}

\section{Conclusions}
\label{sec:conclusion}

In this paper, we have examined how the computational complexity bounds for the Fast Gradient Method and Dual Gradient Projection method are influenced by the desired system performance (e.g.\ weighting matrix selection).
 The complexity bounds for FGM and DGP demonstrate distinct regions where the bound is influenced by the spectrum of the individual weighting matrices.
 Additionally, the complexity bounds of FGM and DGP behave differently under cost function scaling; the FGM bound increased as $Q$ dominated while the DGP bound decreased as $Q$ dominated.
 This suggests that not pre-determining the algorithm and instead having it as a design variable could benefit the overall system design.

To derive the computational complexity bounds, we derived a system-theoretic method for analyzing the primal and dual Hessian matrices by viewing the matrices as Toeplitz operators.
 This method allows for horizon-independent bounds of the extremal eigenvalues and condition number to be computed using tools such as the $H_{\infty}$ norm; removing the need to form large matrices to experimentally estimate the values.

While we applied these bounds to computing computational complexity, they can also be applied to the design of the actual computing hardware for the algorithms.
 Fixed-point implementations of algorithms such as FGM \cite{Richter2012}, DGP \cite{Patrinos2013_DGP}, and Proximal Newton \cite{Guiggiani2014_proximalNewton} utilize the condition number and extremal eigenvalues to bound the round-off error that is present in the computations.
 Horizon-independent spectral bounds can then be used to guarantee hardware designs are compatible with any horizon length desired, removing the need to re-synthesize the designs if the prediction horizon were to be changed and allowing run-time variation of the horizon length (e.g.\ for variable-horizon controllers) in fixed-point implementations.

This system-theoretic approach can also be used to examine preconditoned Hessian matrices.
 We derived a preconditioner that is equivalent to the optimal preconditioner in \cite{Richter2012}, but is computable in closed-form using small matrices.
 Our framework allows for preconditioning to be viewed from the system-theoretic perspective, with the preconditioner being an input compensator for the predicted system.
 This viewpoint suggests new design techniques, such as methods from $H_{\infty}$ loop-shaping, to compute preconditioner matrices may exist.
 
In the future, these bounding results can be incorporated into controller-design methods to estimate the computational resources needed for a system and be used inside design optimization.
 We have demonstrated that this trade-off between computational resources and control performance is a worthwhile area to explore; with an example system showing that a 5\% reduction in the state 2-norm requires a 188\% increase in the computational complexity of the control algorithm.

\appendices

\renewcommand{\theequation}{\thesection.\arabic{equation}}
\renewcommand{\theproposition}{\thesection.\arabic{proposition}}

\section{Matrices for the CLQR Problem}
\label{app:mpcMatrices}
 \begin{equation*}
     \Phi \coloneqq \begin{bmatrix}
      A \\ A^2 \\ A^3 \\ \vdots \\ A^N
     \end{bmatrix},
     \Gamma \coloneqq \begin{bmatrix}
      B & 0 & 0 & & 0\\
      AB & B & 0 & & 0\\
      A^2B & AB & B & & 0 \\
      \vdots & & & \ddots & \vdots\\
      A^{N-1}B & A^{N-2}B & A^{N-3}B & \cdots & B
     \end{bmatrix},
 \end{equation*}
 \begin{equation*}
    \bar{R} \coloneqq I_{N} \otimes R,\quad
    \bar{S} \coloneqq \begin{bmatrix}
    I_{N-1} \otimes S & 0\\
    0 & 0
    \end{bmatrix},\quad
    \bar{Q} \coloneqq \begin{bmatrix}
     I_{N-1} \otimes Q & 0\\
     0 & P
     \end{bmatrix},
 \end{equation*}
 \begin{equation*}
    G \coloneqq \bar{D} \Gamma + \bar{E}, \quad
    \bar{D} \coloneqq I_{N} \otimes 
     \begin{bmatrix}
      D\\
      0_{l \times n} \\
     \end{bmatrix},\quad
    \bar{E} \coloneqq I_{N} \otimes
     \begin{bmatrix}
      0_{j \times m}\\
      E
     \end{bmatrix},
 \end{equation*}
 \begin{equation*}
    F \coloneqq -\bar{D}
     \Phi, \quad
    g \coloneqq \mathbf{1}_{N} \otimes
     \begin{bmatrix}
     c_x \\
     c_u
     \end{bmatrix}.
 \end{equation*}

\section{Proofs}

\subsection{Proof of Lemma~\ref{lem:scaling:prelim:limitingValues}}
\label{app:proof:limitingValues}
\begin{proof}
    Since $S$ is arbitrary, $H_{c} = H_{cQ} + H_{n} - H_{e}$.
    We begin by computing $a_l \coloneqq \lim_{n \to \infty} \frac{1}{n} \trace{H_{c}}$.
    $H_{e}$ has finite rank (from Lemma~\ref{lem:dense:Sterm:HeRank}), so its limit will go to $0$ leaving only the Topelitz component
    \begin{equation}
        a_l = \lim_{n \to \infty} \frac{1}{n} \trace{H_{cQ} + H_{n}}. \label{eq:traceLimits:alLimitTrace}
    \end{equation}
    Using Szeg{\"o}'s limit theorem for block Toeplitz matrices \cite[Theorem 6.5]{Gutierrez-Gutierrez2012_blockSurvey}, the limit as $n \to \infty$ of a function $f$ of the eigenvalues of a Toeplitz matrix can be transformed into a definite integral of $f$ applied to the eigenvalues of the matrix symbol.
    Since the trace is the sum of the eigenvalues, \eqref{eq:traceLimits:alLimitTrace} becomes
    \begin{align*}
        a_l = \frac{1}{2m\pi}\int_{0}^{2\pi} \trace{ P_{H_{cQ}}(e^{j \omega}) + P_{H_{n}}(e^{j \omega})} d\omega.
    \end{align*}
    Separating over the addition and expanding,
    \begin{multline*}
        a_l = \frac{1}{2m\pi}\biggl( \int_{0}^{2\pi} \trace{\ctrans{P_{\Gamma}(e^{j\omega})} Q P_{\Gamma}(e^{j\omega})} + \trace{R}\\
        + 2\trace{ \trans{S} P_{\Gamma}(e^{j\omega}) d\omega }\biggr).
    \end{multline*}
    Using the definition of the $H_{2}$ norm and Frobenius norm, $a_{l}$ becomes
    \begin{equation*}
        a_l = \frac{1}{m}\left( \Htwonorm{G_Q}^2 + 2I_{1} + \Fronorm{\sqrtm{R}}^2 \right).
    \end{equation*}
        
    Computing $b_l$ can be done in a similar manner to $a_l$.
    Note that $ \text{Rank}(AB) \leq \min\{ \text{Rank}(A), \text{Rank}(B)\}$, meaning anything multiplied by $H_{e}$ will have finite rank.
    This means those terms will go to $0$ in the limit, leaving only the Topelitz component
    \begin{equation}
    b_l = \lim_{n \to \infty} \frac{1}{n} \trace{(H_{cQ} + H_{n})^2}. \label{eq:traceLimits:blLimitTrace}
    \end{equation}
    Applying the Szeg{\"o} limit theorem to \eqref{eq:traceLimits:blLimitTrace} and expanding gives
    \begin{equation*}
        b_l = \frac{1}{2m\pi}\int_{0}^{2\pi} \trace{(P_{H_{cQ}}(e^{j \omega}) + P_{H_{n}}(e^{j\omega}))^2 } d\omega \\
    \end{equation*}
    Expanding the integrand, and simplifying using properties of the trace and the definition of the $H_{2}$ and Frobenius norms produces the final result.
\end{proof}

\subsection{Proof of Lemma~\ref{lem:condScaling:condLowerBound}}
\label{app:condAsympBoundProof}
\begin{proof}
    We begin with the lower bound for the condition number of a matrix of dimension $n$ presented in \cite[Corollary 2.3]{Styan1983}:
    \begin{equation}
        \label{eq:asympEigBoundProof:initialLower}
        \kappa(H_{c}) \geq 1 + \frac{2s}{a - \sfrac{s}{p}}
    \end{equation}
    with $p \coloneqq \sqrt{n - 1}$, $s \coloneqq \sqrt{b - a^2}$, and $a$ and $b$ from Lemma~\ref{lem:scaling:prelim:limitingValues}.
    
    To determine the asymptotic bound, we take the limit of \eqref{eq:asympEigBoundProof:initialLower} to find
    \begin{equation*}
        \label{eq:asympEigBoundProof:mainEq}
        \lim_{n \to \infty} 1 + \frac{2s}{a - \sfrac{s}{p}} = 1 + 2\frac{\sqrt{b_{l} - a_{l}^2}}{a_{l}}.
    \end{equation*}
\end{proof}

\subsection{Proof of Theorem~\ref{thm:condScaling:scaledBound}}
\label{app:scaledBoundProof}
\begin{proof}
    Starting with the lower bound from Lemma~\ref{lem:condScaling:condLowerBound}, we define
    $v \coloneqq \sqrt{b_{l} - a_{l}^2}$ and $w \coloneqq a_{l}$.
    Substituting in the norm scalings from Lemma~\ref{lem:scaling:prelim:normScaling} into $w$ gives
    \begin{equation*}
        w = \alpha_1 \Htwonorm{G_{Q}}^2 + \alpha_2 \Fronorm{\sqrtm{R}}^2,
    \end{equation*}
    which is the final version of the denominator.
    
    Substituting in the norm scalings from Lemma~\ref{lem:scaling:prelim:normScaling} into $v$ and expanding the square gives
    \begin{multline*}
        v = m \left( \alpha_1^2 I_{6} + 2\alpha_1 \alpha_2\Htwonorm{G_{QR}}^2 + \alpha_2^2\Fronorm{R}^2 \right)\\
         - \alpha_1^2 \Htwonorm{G_{Q}}^4 - \alpha_1 \alpha_2 \Htwonorm{G_{Q}}^2 \Fronorm{\sqrtm{R}}^2 - \alpha_2^2 \Fronorm{\sqrtm{R}}^4
    \end{multline*}
    Grouping the terms by the $\alpha_1$ and $\alpha_2$ coefficients leads to
    \begin{multline*}
        v = \alpha_1^2 \left( m I_6 - \Htwonorm{G_{Q}}^4 \right)
          + \alpha_2^2 \left( m\Fronorm{R}^2 - \Fronorm{\sqrtm{R}}^4 \right) \\
          + 2 \alpha_1 \alpha_2 \left( m\Htwonorm{G_{QR}}^2 - \Htwonorm{G_{Q}}^2 \Fronorm{\sqrtm{R}}^2 \right)
    \end{multline*}
    which gives the final version of the numerator.
\end{proof}

\balance
\bibliography{references/tuning,references/examples,references/references,references/toeplitz,references/intro}

\end{document}